\tikzset{->-/.style={decoration={
  markings,
  mark=at position #1 with {\arrow{>}}},
  postaction={decorate}}}
\colorlet{darkblue}{blue!90!black}
\colorlet{darkred}{red!90!black}
\colorlet{darkgreen}{green!70!black}
\newcommand{\hmin}{\tilde{h}}
\newcommand{\taumin}{\tilde{\tau}}
\DeclareMathOperator*{\esssup}{ess\,sup}
\newcommand{\cS}{\mathcal{S}}
\newcommand{\gridlim}{Z}
\newcommand{\h}{h}
\newcommand{\Ipcx}{I_h^{\mathrm{pcx}}\,}
\newcommand{\Ipcy}{I_h^{\mathrm{pcy}}\,}
\newcommand{\Iplx}{I_h^{\mathrm{plx}}\,}
\newcommand{\Iply}{I_h^{\mathrm{ply}}\,}
\newcommand{\floor}[1]{\left\lfloor #1 \right\rfloor}
\newcommand{\pc}[1]{{#1_h^{\text{pcx}}}}
\newcommand{\pcx}[1]{{#1^{\text{pcx}}}}
\newcommand{\plx}[1]{{#1^{\text{plx}}}}
\newcommand{\plt}[1]{{#1^{\text{plt}}}}
\newcommand{\pctl}[1]{{#1^{\text{pct-}}}}
\newcommand{\pctr}[1]{{#1^{\text{pct+}}}}
\newcommand{\lc}[1]{{#1^{\text{plt,pcx}}}}
\newcommand{\clc}[1]{{#1^{\text{pct-pcx}}}}
\newcommand{\crc}[1]{{#1^{\text{pct+pcx}}}}
\newcommand{\sprod}[2]{\left\langle#1,#2\right\rangle}
\newcommand{\Tr}{\mathrm{Tr}}
\newcommand{\cAOmT}{\cA}
\renewcommand{\d}{\mathrm{d}}
\theoremstyle{definition}
\numberwithin{equation}{section}
\title{Stochastic partial differential equations arising in self-organized criticality}
\author{\v{L}ubom\'ir Ba\v{n}as\thanks{Faculty of Mathematics, Bielefeld University,
    Universit\"atsstr.~25, 33615 Bielefeld, Germany (banas@math.uni-bielefeld.de)}, Benjamin Gess\thanks{Max Planck Institute for Mathematics in the
    Sciences, Inselstr.~22, 04103 Leipzig, Germany and Faculty of
    Mathematics, Bielefeld University, Universit\"atsstr.~25, 33615
    Bielefeld, Germany (b.gess@mis.mpg.de)}, Marius Neuß\thanks{Max Planck Institute for Mathematics in the
    Sciences, Inselstr.~22, 04103 Leipzig, Germany (neuss@mis.mpg.de)}}
\begin{document}

\maketitle

\begin{abstract}
We study scaling limits of the weakly driven Zhang and the Bak-Tang-Wiesenfeld
(BTW) model for self-organized criticality. We show that
the weakly driven Zhang model converges to a stochastic partial differential equation (PDE) with
singular-degenerate diffusion. In addition, the deterministic BTW model is
shown to converge to a singular-degenerate PDE. Alternatively, the proof
of the scaling limit can be understood as a convergence proof of a
finite-difference discretization for singular-degenerate stochastic PDEs. 
This extends recent work on finite difference approximation of
(deterministic) quasilinear diffusion equations to discontinuous diffusion
coefficients and stochastic PDEs. 
In addition, we perform numerical simulations illustrating key features of the considered models 
and the convergence to stochastic PDEs in spatial dimension $d=1,2$.

\textsc{MSC 2010:} 46B50, 60H15, 65M06

\textsc{Keywords:} Self-organized criticality, scaling limits, explicit finite
  difference approximation, weak convergence approach, singular-degenerate SPDEs
\end{abstract}

\section{Introduction}
\label{2sec:introduction}

The concept of self-organized criticality (SOC) was introduced by Bak, Tang
and Wiesenfeld in the seminal articles \cite{BTW,BTW88} by means of the
paradigmatic ``sandpile model''. This particle model serves as a guiding
example of a non-equilibrium system reaching criticality without apparent
external tuning of model parameters, thus exemplifying the concept of
SOC. The field of SOC has since received strong interest in physics and
alternative particle models like the Zhang model have been introduced, see
for example \cite{Watkins-Pruessner,Arenas,
  Dickman2000,OFC,Zhang,ricepile,Manna_1991,Bak,Carlson-modelling,Dhar-Ramaswamy}.
Despite their apparent simplicity, the analysis of the dynamics of these
sandpile models is challenging and the microscopic models contain several
arbitrary degrees of freedom, such as the structure of the underlying grid
and its size. Consequently, already shortly after the introduction of the
concept of SOC, continuum models mimicking the discrete systems have been
introduced in the physics literature, see \eg
\cite{Diaz-G,Diaz-G-PhysRevA,Cafiero1995,Arenas,Andrade-Pires,Grinstein-Lee-Sachdev,Dickman-Vespignani-Zapperi,Carlson-Annals-Prob,Carlson-Montakhab,Hergarten-Neugebauer,BCRE,Bantay-Janosi,Hwa-Kardar,Prigozhin94,Garrido-Lebowitz-Maes-Spohn}.
Informally, these are thought to correspond to continuum limits
of the discrete systems \eqref{1eq:147-new}, \eqref{1eq:2-new}, leading to
singular-degenerate stochastic partial differential equations (SPDEs) of the type
\begin{align}\label{1eq:3new}
  \begin{split}
    \d X(t) &= \Delta\tilde\phi(X(t))\d t + B(X(t))\d W(t)  \quad \text{on }(0,T]\times(0,1).\\
  \end{split}
\end{align}
Motivated by this development, a large number of mathematical contributions have been devoted to the analysis of such (stochastic) PDEs, see, for example,
\cite{Barbu17, Barbu-Cellular,BRR-Rd,BBDPrR-SOC-via-SPDE,Barbu-SOC-convergence,BDPrR-SPMEbook,Barbu-Bogachev,BDPrR-SPMEandSOC,BR-SPMEandSOCdims,BDPrR-existence-strong,BDPrR-existence-nonneg,BDPrR-FTE,BM,DPrRRW,DPrR-weaksolns,RRW,RWZ-reflection,G-R,Gess-FTE}. 
However, despite the ongoing interest in continuum models for SOC, so far their rigorous justification in terms of scaling limits of the discrete systems remained an open problem. As a consequence, also the question in which scaling regimes these continuum models apply remained unanswered.

The goal of the present work is to address the aforementioned open problems: We identify scaling regimes for the model parameters $D$ and $\sigma^2$ in \eqref{1eq:2-new} for which the rescaled weakly driven Zhang model converges to the solution of a singular stochastic PDE of the type \eqref{1eq:3new}. The proof of this fact is challenging due to the interplay of the irregularity of the random forcing, which converges to space-time white noise, with the degeneracy of the diffusion. For this reason, a proof based on distribution spaces taking the place of more classic function spaces has to be developed. In addition, we prove that the pure diffusion part of the BTW model converges to a deterministic, singular-degenerate PDE. In this case, an additional difficulty arises due to the non-reflexivity of the corresponding energy space. This is overcome in the present work by devising an SVI approach for this setting. 

The ``sandpile model'', also called BTW model in the following, specifies
the evolution of the height $(X^{n,j})_{n=0,\dots,N; j\in \Lambda}\subset
\R$ of a number of particles on a $d$-dimensional grid $j\in\Lambda = \{0,
\dots, Z\}^d$ of length $Z\in\N_{\geq 2}$ in discrete time $n\in\{0, 1,\dots, N\}$,
$N\in\N$. The dynamics of the height are induced by a competition of energy
input and dissipation: As long as a critical height $K$ is not exceeded,
that is $\|X^{n,\cdot}\|_\infty \leq K$, energy is added to the system. If
the critical height $K$ is exceeded, the dissipation mechanism, a so-called
toppling event, is invoked redistributing energy throughout the
system. Without loss of generality, $K$ can be set to $1$ by a scaling argument. This leads to the definition of the dynamics via
\begin{equation}\label{1eq:147-new}
  X^{n+1,j} = X^{n,j} + D \sum_{j' \sim j} \left(\tilde\phi(X^{n,j'}) -
    \tilde\phi(X^{n,j})\right)+ \Ind{\{\|X^{n,\cdot}\|_\infty \leq 1\}} \xi^{n,j},
\end{equation}
for $j\in \Lambda' = \{1,\dots,Z-1\}^d$, where the sum reaches over direct
neighboring cells $j'$ of $j$, $D\in \big(0,\frac1{2d}\big]$, $\tilde\phi = \tilde\phi_1$ with
\begin{equation}\label{1eq:148-new}
  \tilde\phi_1(x) = \Ind{(1,\infty)}(x) - \Ind{(-\infty, -1)}(x),
\end{equation}
and $\xi^{n,j} = \tilde\mu \delta_{j, s_n}$, $\tilde\mu > 0$, are random variables with $s_n \sim
\mathrm{Uni}(\Lambda')$ independently identically distributed. By fixing
the height to zero on $\Lambda\setminus \Lambda'$, energy can be dissipated
via toppling.

Subsequently, several modifications of the original BTW model have been introduced, for example, the Zhang model for which the diffusion $\tilde\phi = \tilde\phi_2$ takes the form
\begin{equation}\label{1eq:149-new}
  \tilde\phi_2(x) =  |x| \left(\Ind{(1,\infty)}(x) -
    \Ind{(-\infty, -1)}(x)\right), 
\end{equation}
among many more, see, for example \cite{OFC,Feder-Feder-Stickslip}. Both
models share the properties to be slowly driven by energy input with rapid
relaxation, that have been identified in \cite[Section
III.1]{Dickman2000} as characteristics of systems displaying SOC.

Following \cite[Section III.2]{Dickman2000}, we introduce the \textit{weakly} driven BTW/Zhang models
\begin{equation}\label{1eq:2-new}
  X^{n+1,j} = X^{n,j} + D \sum_{j' \sim j} \left(\tilde\phi(X^{n,j'}) -
    \tilde\phi(X^{n,j})\right) + \xi^{n,j},
\end{equation}
where now $(\xi^{n,j})_{n=0,\dots,N; j\in\Lambda'}$ are independent random
variables identically distributed with positive mean $\E\xi^{n,j} = \tilde\mu > 0$ and finite variance $\mathrm{Var}\left(\xi^{n,j}\right) = \sigma^2<\infty$.
We call these models weakly driven,
since the totally asymmetric noise in \eqref{1eq:147-new}, which only takes non-negative values, is replaced by a weakly asymmetric noise which still
has positive mean, but may also take negative values.

Starting from the zero initial condition, energy builds up in the system
until toppling events appear. These can induce chain reactions, that is, cause subsequent topplings. A series of
$m$ toppling events is called an avalanche of size $m$.
A key observation in SOC is that the systems described
above reach criticality, in the sense that the statistics of the sizes of
avalanches show a power law behavior (see e.\,g.\,\cite{BTW88}). The numerical simulations presented in
Section~\ref{sec_num_power} below demonstrate
that this behavior is still present in the weakly asymmetric case \eqref{1eq:2-new}.

More precisely, with the  space-time rescaling 
$h=\frac1{Z}, \tau = \frac{T}{N}$, model \eqref{1eq:2-new} becomes
\begin{equation}\label{1eq:150-new}
  \begin{split}
  X_{h,\tau}^{n+1,\cdot} &= X_{h,\tau}^{n,\cdot} + \tau \Delta_{h}\tilde\phi\left(X_{h,\tau}^{n,\cdot}\right) +
  \tilde\xi_{h,\tau}^{n,\cdot},\quad\text{for }n=0,\dots, N-1,\\
  X_{h,\tau}^0 &= x_{h}^0,
\end{split}
\end{equation}
with zero boundary condition, where the random variables $(\tilde\xi_{h,\tau}^{n,j})_{j\in \Lambda'}$ are
assumed to be $\R$-valued, independent identically distributed with mean
$\mu\tau$, variance $\frac{\tau}{h^d}$ and finite sixth moments. This scaling means that the constants $D$,
$\tilde\mu$ and $\sigma^2$ in \eqref{1eq:2-new} are replaced by
$\frac{\tau}{h^2} = \frac{T Z^2}{N}$, 
$\tau\mu = \frac{\mu T}{N}$ and $\frac{\tau}{h^d}=\frac{T Z^d}{N}$,
respectively.
Furthermore, $\Delta_h$ denotes the discrete
Dirichlet Laplacian. Recall that on the level of the discrete model, $D$
encodes the share of the quantity on the critical site being redistributed
during one toppling event, and $\mu$ encodes the average quantity added in
each time step.

In the following, we identify grid functions on the lattice $h\Lambda'$
with their prolongations chosen to be piecewise affine in time and piecewise constant in
space.

\begin{thm}[See Theorem \ref{2main-thm} below] \label{1thm-Zhang} Let $d=1$. Consider the rescaled weakly
  driven Zhang model \eqref{1eq:150-new} with diffusion nonlinearity $\phi_2$
  and initial condition $x^0_{h} \in \R^{Z-1}$. Assume that
  that the (strong) CFL condition
  \begin{align}
    \frac{\tau}{h^2} \to 0\quad \text{for }\tau, h\to 0\label{1eq:CFL}
  \end{align}
   is satisfied and that $x_{h}^0 \to
  x_0\in L^2([0,1])$ for $h\to 0$. Then,
  \begin{displaymath}
    X_{h,\tau} \to X \quad \text{in } L^2([0,T]; L^2([0,1]) \text{
      and } L^\infty([0,T]; \Hd)
  \end{displaymath}
 for $\tau, h\to 0$ in distribution, where $X$ is the unique weak solution to 
\begin{align}\label{1eq:Zhang}
  \begin{split}
    \d X(t) &= \Delta\tilde\phi_2(X(t))\,\d t + \mu\d t + \d W(t)  \quad \text{on }(0,T]\times(0,1),\\
    X(0) &= x_0,
  \end{split}
\end{align}
with initial state $x_0\in L^2([0,1])$ and $W$ is a cylindrical
$\mathrm{Id}$-Wiener process on $L^2([0,1])$. 
\end{thm}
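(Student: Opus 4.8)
The plan is to treat the rescaled scheme \eqref{1eq:150-new} as an explicit finite-difference discretisation whose (piecewise-affine-in-time, piecewise-constant-in-space) prolongations we show converge, via a tightness-and-identification argument in the spirit of the martingale/variational approach to SPDE, to the unique solution of \eqref{1eq:Zhang}. The first task is a priori estimates uniform in $h,\tau$. Testing the scheme with $X_{h,\tau}^{n,\cdot}$ and summing by parts in space, the diffusion term produces the dissipative contribution $-\tau\sp{\nabla_h X_{h,\tau}^{n,\cdot}}{\nabla_h\tilde\phi_2(X_{h,\tau}^{n,\cdot})}\le 0$, since $\tilde\phi_2$ is monotone; where $\tilde\phi_2$ is active it has slope one, so this pairing also controls $\tau\norm{\nabla_h\tilde\phi_2(X_{h,\tau}^{n,\cdot})}^2$. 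The quadratic error of the explicit stepping, $\tau^2\norm{\Delta_h\tilde\phi_2(X_{h,\tau}^{n,\cdot})}^2$, is bounded by $C(\tau/h^2)\,\tau\norm{\nabla_h\tilde\phi_2(X_{h,\tau}^{n,\cdot})}^2$ through the inverse estimate $\norm{\Delta_h v}\le Ch^{-1}\norm{\nabla_h v}$, and by the CFL condition \eqref{1eq:CFL} it is absorbed into the dissipation. Treating the noise with a discrete Burkholder--Davis--Gundy inequality and the assumed finiteness of the sixth moments then yields uniform bounds for $X_{h,\tau}$ in $L^2(\Omega;L^\infty(0,T;L^2))$ and for $\tilde\phi_2(X_{h,\tau})$ in $L^2(\Omega;L^2(0,T;\Hsob))$.

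From these estimates I would deduce tightness. The $\Hsob$-regularity of $\tilde\phi_2(X_{h,\tau})$, together with the time regularity read off from the increments $X_{h,\tau}^{n+1,\cdot}-X_{h,\tau}^{n,\cdot}=\tau\Delta_h\tilde\phi_2(X_{h,\tau}^{n,\cdot})+\tilde\xi_{h,\tau}^{n,\cdot}$ estimated in $\Hd$, gives, via a stochastic compactness criterion of Aubin--Lions type, tightness of the laws of $(X_{h,\tau},W_{h,\tau})$ on $C([0,T];\Hd)$ together with weak compactness in $L^2([0,T];L^2([0,1]))$, where $W_{h,\tau}$ is the martingale part of the driving noise. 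Invoking the Skorokhod--Jakubowski representation theorem, I pass to a new probability space on which a subsequence converges almost surely, with $X_{h,\tau}\to X$ in $C([0,T];\Hd)$ and $X_{h,\tau}\weakto X$ in $L^2([0,T];L^2)$, $\tilde\phi_2(X_{h,\tau})\weakto\chi$ in $L^2([0,T];\Hsob)$, and $W_{h,\tau}\to W$.

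The decisive step is the identification of the limit. The mean $\mu\tau$ of the noise accumulates to the drift $\mu\,\d t$, while its centred part is a discrete martingale whose predictable quadratic variation converges, by the prescribed variance $\tau/h^d$ with $d=1$ and a martingale central limit argument, to $t\,\mathrm{Id}$; a L\'evy-type characterisation then identifies $W$ as a cylindrical $\mathrm{Id}$-Wiener process and lets one pass to the limit in the stochastic term. Because the singular-degenerate $\tilde\phi_2$ is only weakly convergent after composition, the nonlinearity is identified by monotonicity: pairing the strong $\Hd$-convergence of $X_{h,\tau}$ against the weak $\Hsob$-convergence of $\tilde\phi_2(X_{h,\tau})$ passes to the limit in the energy pairing $\sp{\tilde\phi_2(X_{h,\tau})}{X_{h,\tau}}$, and a Minty--Browder argument gives $\chi=\tilde\phi_2(X)$ while the accompanying convergence of norms upgrades the weak $L^2([0,T];L^2)$ convergence of $X_{h,\tau}$ to the asserted strong convergence. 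Hence $X$ is a weak solution of \eqref{1eq:Zhang}; since the drift $u\mapsto-\Delta\tilde\phi_2(u)$ is maximal monotone on $\Hd$, that equation has a unique weak solution, so every subsequential limit coincides with it and the full family converges in distribution.

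I expect the main obstacle to be exactly this identification. The discontinuity and degeneracy of $\tilde\phi_2$ rule out any direct passage to the limit and force the monotonicity argument to be run \emph{simultaneously} with the martingale identification of the noise, all while the discrete summation-by-parts structure and the explicit-scheme error terms are kept under control using only the weak CFL condition $\tau/h^2\to0$ rather than a stronger parabolic scaling. The technically most delicate point is obtaining strong, rather than merely weak, $L^2([0,T];L^2)$-convergence of $X_{h,\tau}$, which rests on the no-energy-loss encoded in the monotonicity (energy-balance) estimate.
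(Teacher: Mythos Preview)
Your overall architecture---a priori estimates, tightness, Skorokhod--Jakubowski, identification via monotonicity, uniqueness---is correct and matches the paper's strategy. However, the first step contains a genuine gap that makes the whole chain fail as written.

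You propose to test the scheme with $X_{h,\tau}^{n,\cdot}$, i.e.\ to run an $L^2$ energy estimate, aiming for bounds in $L^2(\Omega;L^\infty(0,T;L^2))$ and $L^2(\Omega;L^2(0,T;\Hsob))$ for $\tilde\phi_2(X_{h,\tau})$. This cannot work with space-time white noise. The noise increment is $\sqrt{\tau/h}\,\xi_h^n$ with $\xi_h^{n,l}$ centred i.i.d.\ of unit variance, so in the discrete $L^2$ norm $\norm{\cdot}_0$ one has
\[
\E\norm{\sqrt{\tau/h}\,\xi_h^n}_0^2 \;=\; h\sum_{l=1}^{Z-1}\frac{\tau}{h}\,\E|\xi_h^{n,l}|^2 \;=\; \tau(Z-1)\;\approx\;\frac{\tau}{h}.
\]
Summing over the $N=T/\tau$ time steps therefore contributes $T/h\to\infty$ to the $L^2$ energy, and no uniform $L^\infty(0,T;L^2)$ bound is possible. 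This is the discrete shadow of the fact that, in the continuous equation with cylindrical $\mathrm{Id}$-Wiener noise on $L^2$, the It\^o correction in $\norm{X(t)}_{L^2}^2$ is $t\,\Tr_{L^2}(\mathrm{Id})=+\infty$.

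The paper's essential idea, which your proposal is missing, is to run \emph{all} the a priori estimates in the discrete $H^{-1}$ norm $\norm{\cdot}_{-1}:=\norm{(-\Delta_h)^{-1/2}\cdot}_0$. Then the noise contribution per step becomes
\[
\frac{\tau}{h}\,\E\norm{\xi_h^n}_{-1}^2 \;=\; \tau\,\Tr(-\Delta_h^{-1})\;\longrightarrow\;\tau\,\norm{I'}_{L_2(L^2,\Hd)}^2\;=\;\tfrac{\tau}{6},
\]
which is uniformly bounded. Testing in $\norm{\cdot}_{-1}$, the dissipative term is $-\tau\sp{X_h^n}{\tilde\phi_2(X_h^n)}_0=-\tau\norm{\tilde\phi_2(X_h^n)}_0^2$, so the natural bounds are $L^\infty(0,T;\Hd)$ for $X$ and $L^2(0,T;L^2)$ for both $X$ and $\tilde\phi_2(X)$---not the $L^\infty(0,T;L^2)$ and $L^2(0,T;\Hsob)$ spaces you expect. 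Consequently tightness is obtained only in the \emph{weak} topology of $L^2([0,T];L^2)$ and the \emph{weak*} topology of $L^\infty([0,T];\Hd)$; the Minty argument is then carried out against the continuous It\^o energy identity in $\Hd$, and the conclusion is convergence in distribution for these weak topologies, not the strong $L^2([0,T];L^2)$ convergence you aim for.
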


While the Zhang model and the BTW model share the difficulty of a discontinuous and locally degenerate diffusion, the diffusivity of the BTW model also degenerates for large values. This complicates the mathematical analysis. In this case, the convergence of the purely diffusive part can still be obtained. 
\begin{thm}[See Theorem \ref{2BTW-thm} below] 
  Consider the BTW model without external forcing and let $d=1$. Assume that \eqref{1eq:CFL} is
  satisfied and that $x_h^0\to x_0$ in $L^2([0,1])$ for $h\to 0$. Then,
  \begin{displaymath}
    X_{h,\tau} \to X \quad \text{weakly* in } L^\infty([0,T]; \Hd)
  \end{displaymath}
  for $\tau,h\to 0$, where $X$ is the unique EVI solution to 
\begin{align}\label{1eq:BTWeq}
  \begin{split}
    \d X(t) &= \Delta\tilde\phi_1(X(t))\,\d t  \quad \text{on }(0,T]\times(0,1),\\
    X(0) &= x_0,
  \end{split}
\end{align}
with initial state $x_0\in L^2([0,1])$.
\end{thm}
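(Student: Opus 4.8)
The plan is to read the explicit scheme as a discrete $\Hd$-gradient descent for the convex energy $\cE(u) = \int_0^1 (|u|-1)_+\,\d x$, whose integrand $\Psi$ satisfies $\Psi' = \tilde\phi_1$, so that \eqref{1eq:BTWeq} is precisely the $\Hd$-gradient flow $\partial_t X = -\nabla_{\Hd}\cE(X)$. Since \eqref{1eq:BTWeq} carries no noise, I would dispense with the martingale machinery needed for the Zhang model and argue deterministically: produce uniform a priori bounds, extract a limit, identify it as the EVI solution through the variational rather than the pointwise structure, and finally upgrade subsequential to full convergence using the uniqueness of EVI solutions for convex functionals in Hilbert space.

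First I would establish the a priori estimates. Testing the scheme in the discrete $\Hd$-inner product and using convexity of $\Psi$ yields a discrete energy--dissipation inequality; the explicit nature produces a remainder proportional to $\tau^2\norm{\Delta_h\tilde\phi_1(X_{h,\tau}^n)}_{\Hd}^2 = \tau^2\norm{\nabla_h\tilde\phi_1(X_{h,\tau}^n)}_{L^2}^2$, which after summation in $n$ equals $\tau$ times the discrete dissipation and hence vanishes under the CFL condition \eqref{1eq:CFL}. This gives uniform control of $\cE(X_{h,\tau})$ in $L^\infty([0,T])$, of $X_{h,\tau}$ in $L^\infty([0,T];\Hd)$, and of the flux $v_{h,\tau}\ce\tilde\phi_1(X_{h,\tau})$ in $L^2([0,T];\Hsob)\cap L^\infty$; consequently $\partial_t X_{h,\tau} = \Delta_h v_{h,\tau}$ is bounded in $L^2([0,T];\Hd)$. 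The crucial structural difference from the Zhang model is that $\Psi$ grows only linearly, so $\cE$ controls $X_{h,\tau}$ merely in $L^\infty([0,T];L^1)$ and not in $L^\infty([0,T];L^2)$.

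These bounds give weak* compactness in $L^\infty([0,T];\Hd)$ directly. To identify the limit I would, along a subsequence, take $X_{h,\tau}\tows X$ in $L^\infty([0,T];\Hd)$ and $v_{h,\tau}\weakto v$ in $L^2([0,T];\Hsob)$, and, exploiting that bounded sequences of measures embed compactly into $\Hd$ in one space dimension, invoke an Aubin--Lions argument (with $X_{h,\tau}$ bounded in $L^\infty([0,T];L^1)$ and $\partial_t X_{h,\tau}$ in $L^2([0,T];\Hd)$) to obtain strong convergence $X_{h,\tau}\to X$ in $C([0,T];\Hd)$. Passing to the limit in the linear identity $\partial_t X_{h,\tau} = \Delta_h v_{h,\tau}$ is then immediate. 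Rather than identifying $v$ pointwise through $\tilde\phi_1$ — which is obstructed precisely by the linear growth, since $X_{h,\tau}$ converges only as measures and $\tilde\phi_1$ is discontinuous, so one cannot test against $\tilde\phi_1(w)$ — I would pass to the limit in the discrete EVI: for every admissible $w$ the scheme satisfies $\tfrac1{2\tau}\big(\norm{X_{h,\tau}^{n+1}-w}_{\Hd}^2 - \norm{X_{h,\tau}^{n}-w}_{\Hd}^2\big) + \cE(X_{h,\tau}^n) \le \cE(w) + r_n$ with $\sum_n \tau r_n \to 0$ under \eqref{1eq:CFL}. Summing in $n$, the quadratic terms telescope and pass to the limit via the strong $C([0,T];\Hd)$-convergence, while the $\cE(X_{h,\tau}^n)$-term is handled by lower semicontinuity of the convex energy; one thereby obtains the continuous EVI for $X$, i.e.\ $X$ is the EVI solution of \eqref{1eq:BTWeq}. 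Since the EVI characterisation yields uniqueness of the limit, the full family $X_{h,\tau}$ converges and not merely a subsequence.

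The main obstacle is exactly this identification step under the linear-growth degeneracy. Because $\cE$ has only linear growth — the BTW diffusivity degenerates both on $(-1,1)$ and for large values — the a priori control on $X_{h,\tau}$ is confined to $L^1$/measures, so no strong $L^2$-convergence is available; this is why only weak* convergence in $L^\infty([0,T];\Hd)$, rather than the stronger $L^2$-convergence of the Zhang case, can be expected. The technical heart is therefore to establish lower semicontinuity of $\cE$ along the weak*-measure convergence of $X_{h,\tau}$, controlling possible concentration of mass so that no energy is lost in the limit, together with the verification that the explicit-scheme remainder $r_n$ is genuinely absorbed by the CFL condition.
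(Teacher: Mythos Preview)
Your overall architecture---read the scheme as a discrete $\Hd$-gradient flow, establish a discrete EVI, pass to the limit, and conclude by uniqueness of EVI solutions---coincides with the paper's. The paper also writes a discrete variational inequality (against time-dependent test processes $v_h$), controls the explicit-scheme remainder by $C\tau/h^2\to 0$ using only $|\tilde\phi_1|\le 1$, and identifies the weak* limit via lower-semicontinuity. So the skeleton is right.

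The genuine gap is in your a priori bounds and, consequently, in the Aubin--Lions step. Testing the scheme in the discrete $\Hd$ inner product gives
\[
\tfrac12\norm{X_h^{n+1}}_{-1}^2-\tfrac12\norm{X_h^n}_{-1}^2
= -\tau\sp{X_h^n}{\tilde\phi_1(X_h^n)}_0+\tfrac{\tau^2}{2}\norm{\tilde\phi_1(X_h^n)}_1^2,
\]
which, after absorbing the last term via $\norm{\cdot}_1^2\le 4h^{-2}\norm{\cdot}_0^2$ and the CFL condition, yields exactly two bounds: $\sup_n\norm{X_h^n}_{-1}\le C$ and $\sum_n\tau\norm{\tilde\phi_1(X_h^n)}_0^2\le C$. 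From $x\tilde\phi_1(x)\ge\Psi(x)$ you further get $\cE(X_h)\in L^1([0,T])$ uniformly. You do \emph{not} obtain $\cE(X_{h,\tau})\in L^\infty_t$, nor $v_{h,\tau}=\tilde\phi_1(X_{h,\tau})\in L^2_t\Hsob$, nor $\partial_t X_{h,\tau}\in L^2_t\Hd$, nor $X_{h,\tau}\in L^\infty_t L^1$. Your justification that ``the remainder equals $\tau$ times the discrete dissipation'' is circular for the primal energy estimate: bounding $\sum_n\tau^2\norm{\nabla_h\tilde\phi_1(X_h^n)}_{L^2}^2$ by $\tau$ times $\sum_n\tau\norm{\nabla_h\tilde\phi_1(X_h^n)}_{L^2}^2$ presupposes the very dissipation bound you are trying to establish, and the standard route to that bound requires a Lipschitz nonlinearity, which $\tilde\phi_1$ is not. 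Concretely, $\tilde\phi_1(X_h^n)\in\{-1,0,1\}^{Z-1}$, so $\norm{\nabla_h\tilde\phi_1(X_h^n)}_{L^2}^2$ is $h^{-1}$ times the (weighted) number of sign changes, with no uniform control from the available estimates.

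Without the $L^\infty_tL^1$ bound (or any bound in a space compactly embedded in $\Hd$), your Aubin--Lions step for strong $\cC([0,T];\Hd)$ convergence collapses: you only have $X_{h,\tau}$ bounded in $L^\infty_t\Hd$, and $\Hd\hookrightarrow\Hd$ is not compact. The paper therefore does \emph{not} attempt strong convergence. It integrates the discrete EVI against a nonnegative $\gamma\in L^\infty([0,T])$ and passes to the limit using only weak* convergence in $L^\infty([0,T];\Hd)$, exploiting weak lower-semicontinuity of $u\mapsto\int_0^T\gamma(t)\norm{u(t)-v(t)}_\Hd^2\,\d t$ and of $u\mapsto\int_0^T\gamma(t)\int_0^t\vphi(u(r))\,\d r\,\d t$; the remainder is bounded by $4T\tau/h^2\to 0$ using $|\tilde\phi_1|\le 1$ directly. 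If you drop the strong-compactness attempt and adopt this weak*/lsc passage, your outline goes through.
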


The above two main results can be reinterpreted in terms of the convergence of
time explicit finite difference schemes for singular-degenerate (stochastic) PDE. From
this viewpoint, the results presented in this work partially extend those
of the recent contribution \cite{delTesoI}. More precisely, the results of
\cite{delTesoI} in particular imply the convergence of explicit finite difference schemes of singular-degenerate PDE of the type \eqref{1eq:3new} with $B\equiv 0$
and Lipschitz continuous nonlinearities $\tilde\phi$. Due to
the discontinuous nature of the diffusion coefficients  \eqref{1eq:148-new}, \eqref{1eq:149-new}
the results of \cite{delTesoI} are not applicable in the present setting. In addition, the arguments developed
in \cite{Evje-Karlsen-explicit,delTesoI} rely on compactness arguments in
$L^1$, and, therefore, require at least $L^1$ regularity for the
forcing. In the context of stochastic PDE considered here, these methods
are not applicable, since (1+1)-dimensional space-time white noise has spatial regularity
only $\cC^{-\frac1{2}-}$. Therefore, arguments applicable in spaces of
distributions have to be developed, which is done in the present work by
establishing an $H^{-1}$-based approach rather than working in $L^1$. 

In the language of numerical analysis, the first main result obtained in this work and,
analogously, the second one can be re-formulated as follows.
\begin{Cor}
  Let $d=1$. Consider a rectangular grid covering $[0,T]\times[0,1]$ with
  grid size $h$ and time step size $\tau$ and assume that \eqref{1eq:CFL}
  is satisfied. Then, the solutions of the explicit finite difference
  discretization of the stochastic PDE \eqref{1eq:Zhang}, i.e., \eqref{1eq:150-new} with $\tilde\phi = \tilde\phi_2$,
  converge for $\tau,h\to 0$ to the unique weak solution of \eqref{1eq:Zhang} in the
  same sense as in Theorem \ref{1thm-Zhang}.
\end{Cor}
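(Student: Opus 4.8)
The plan is to observe that the corollary is merely a reformulation of Theorem \ref{1thm-Zhang}: the scheme \eqref{1eq:150-new} is precisely the explicit finite difference discretization of the stochastic PDE \eqref{1eq:Zhang}, so that once this identification is made the assertion follows from Theorem \ref{1thm-Zhang} directly. Accordingly, the only thing to carry out is to verify that the standard discretization of \eqref{1eq:Zhang} indeed produces a scheme of the form \eqref{1eq:150-new} whose stochastic forcing meets the distributional assumptions imposed there.

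First I would discretize the deterministic part of \eqref{1eq:Zhang}. On the uniform grid of spacing $h=\frac1Z$ covering $[0,1]$ with zero boundary values and time step $\tau$ on $[0,T]$, replacing $\Delta$ by the discrete Dirichlet Laplacian $\Delta_h$ and applying a forward Euler step to the drift $\Delta\tilde\phi_2(X)\,\d t + \mu\,\d t$ reproduces exactly the terms $\tau\Delta_h\tilde\phi_2(X^{n,\cdot}_{h,\tau})$ and the deterministic contribution $\mu\tau$ appearing in \eqref{1eq:150-new}.

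Next I would match the noise. For the cylindrical $\mathrm{Id}$-Wiener process $W$ on $L^2([0,1])$, the increment $W((n+1)\tau)-W(n\tau)$ tested against the indicator of the $i$-th spatial cell is a centered Gaussian, and these random variables are independent across distinct cells and across disjoint time steps. A direct covariance computation, accounting for the point-value normalization used in \eqref{1eq:150-new} (i.e. dividing cell masses by $h$), gives variance $\frac{\tau}{h}=\frac{\tau}{h^d}$ per cell for $d=1$, while adding the contribution of $\mu\,\d t$ shifts the mean to $\mu\tau$. Since these increments are Gaussian, they have finite moments of all orders, so in particular the finite-sixth-moment hypothesis holds. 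Hence the discrete forcing built from $W$ is an admissible $\tilde\xi^{n,i}_{h,\tau}$ for \eqref{1eq:150-new}.

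With the scheme and its noise thus identified, and the CFL condition \eqref{1eq:CFL} in force by assumption, Theorem \ref{1thm-Zhang} applies verbatim and yields convergence of $X_{h,\tau}$ to the unique weak solution of \eqref{1eq:Zhang} in the stated norms and in distribution. I expect no substantive obstacle here; the only point deserving care is the normalization in the variance computation, since it is precisely the scaling $\frac{\tau}{h^d}$ of the discretized white noise that makes the CFL condition \eqref{1eq:CFL} the natural stability requirement.
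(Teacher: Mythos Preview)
Your proposal is correct and matches the paper's approach exactly: the paper presents this corollary explicitly as a reformulation of Theorem~\ref{1thm-Zhang} (``the first main result obtained in this work \ldots\ can be re-formulated as follows'') and gives no separate proof. Your verification that the discretized cylindrical Wiener noise yields i.i.d.\ centered increments with variance $\tau/h$ and finite sixth moments is more detail than the paper supplies, and is the right way to make the reformulation precise.
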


\begin{Rem}
  The proof in \cite{delTesoI} relies on a
  comparison principle on the level of the discrete scheme, which is closely
connected to the CFL-type condition
\begin{displaymath}
  \frac{\tau}{h^2} \leq \frac1{2d\,\mathrm{Lip}_{\tilde\phi}},
\end{displaymath}
where $\mathrm{Lip}_{\tilde\phi}$ is the Lipschitz constant of the nonlinearity
${\tilde\phi}$ (see \cite[p. 2272]{delTesoI}). Due to the discontinuous nature of the nonlinearities in \eqref{1eq:148-new} and \eqref{1eq:149-new} such a condition can only be satisfied in a
limiting sense, thus motivating \eqref{1eq:CFL}. 
\end{Rem}

In Section~\ref{sec_num}, numerical simulations are included going beyond the above setup for which rigorous results are obtained, by investigating rates of convergence, convergence in stronger topologies, and relaxed assumptions. For example, numerical simulations for $d=2$ are included indicating the existence of a
non-trivial limit for the Zhang and BTW model \eqref{1eq:150-new}. Notably, in higher spatial dimension $d\geq 2$, the regularity of space-time white noise and, thus, of solutions to \eqref{1eq:Zhang} becomes worse and renormalization may become necessary. 

We conclude the introduction with some brief comments on the method of
proof: The proof of convergence is based on a compactness argument and thus fundamentally relies on establishing uniform energy estimates on the discrete solutions. In order to establish these estimates, we introduce discrete analogs of the continuous $H^{-1}$ norm, where the continuous Laplacian $\Delta$ is replaced by its discrete counterpart $\Delta_h$. This allows to reproduce the known continuous energy estimates on the 
discrete level, but leads to discretization dependent norms and spaces. 
The discrete stability estimates are then carefully transferred to spatially continuous interpolants 
of the discrete solutions.
The derived estimates provide the basis for the subsequent compactness arguments based on the ``weak convergence approach'' inspired by \cite{Flandoli-Gatarek}. The identification of the resulting limit as a probabilistically weak solution to the stochastic PDE driven by space-time white noise is challenging, due to its multivalued nature, but can be resolved by monotonicity techniques, as long as the corresponding energy space is reflexive. This finishes the proof for the Zhang model. 
The monotonicity approach fails in the case of the BTW nonlinearity, since weak convergence can only be obtained in
weaker topologies due to the non-reflexivity of the energy space. At this point the solution concept of
stochastic variational inequalities (SVI solutions) proves crucial as it does not rely on reflexivity. This approach allows to conclude the convergence of the discrete BTW model in the deterministic setting.

The structure of this paper is as follows: We first give an overview on
the mathematical and physical literature in Section \ref{2sec:literature} and
introduce some general notational conventions in Section
\ref{2sec:notation}. The continuum limits of the weakly driven Zhang and BTW model
are then treated in Section \ref{2sec:setting-main-result} and Section
\ref{2sec:det-analysis}, respectively.
Numerical experiments are included in Section~\ref{sec_num}.

\subsection{Mathematical literature}\label{2sec:literature}
We first mention previous attempts to approach SOC in a continuous
setting. Related to the scaling limit approach, one strategy consists in
considering cellular automata resulting from a reformulation and
modification of the original sandpile models, as proceeded in
\cite{Carlson-modelling}, in order to obtain a problem which is more
accessible for analysis. For one of these models, a hydrodynamic limit PDE
has been rigorously obtained in \cite{Carlson-Annals-Prob}. Scaling limits
of the SOC models introduced above have been asserted in
\cite{Diaz-G,Diaz-G-PhysRevA,Arenas,Barbu-Cellular,Barbu17,Bantay-Janosi}. For the
existence of a scaling limit for deterministic sandpiles started from
specific initial configurations, we refer to \cite{Pegden-Smart}. In
\cite{Prigozhin,BCRE,Hergarten-Neugebauer}, systems of PDEs are analyzed as
ad-hoc models for natural processes displaying power-law statistics.

The analysis of the
scaling behavior of SOC particle models leads to questions also arising in
the analysis of explicit finite difference discretizations of (generalized)
porous media equations. The latter has been subject to a lively research activity
in numerical mathematics, advancing from the classical power functions (\eg
\cite{Hoff-DiBenedetto}) via differentiable nonlinearities
(\cite{Evje-Karlsen-explicit}) to merely Lipschitz nonlinearities
(\cite{delTesoI}). As related results, we mention convergence results for
implicit finite difference schemes of degenerate porous media equations
(\cite{Evje-Karlsen-implicit,delTesoI}) and a finite-difference
discretization of a fractional porous medium equation
(\cite{DelTeso-fractional}).

Regarding the numerical approximation of probabilistically weak solutions of nonlinear SPDEs
we refer to the overview paper \cite{opw} and the references therein.
For discretizations of stochastic porous media equations, we refer to
\cite{Grillmeier-Gruen}, where an $L^2$ based finite element
approach is applied in order to construct and analyze solutions for sufficiently smooth noise
as well as to the recent work \cite{spme_bgv} that employs an $H^{-1}$ based finite element approach
which also covers the case of space-time white noise for $d=1$.
In \cite{Gerencser-Gyongy-FD,Gyongy-FD}, linear SPDEs with multiplicative noise
are discretized using finite difference approximations in space, while
\cite{McDonald} considers space-time finite difference approximations of
linear parabolic SPDEs with additive noise. To the best of the authors'
knowledge, the present work is the first time that finite difference
approximations of stochastic porous media equations are rigorously
analyzed.

Concerning the underlying techniques the main arguments of this article
rely on, we mention the following sources of theory and inspiration. For
Yamada-Watanabe type results, we refer to \cite{Yamada-Watanabe} for the
foundational work and to \cite{KurtzII,Roeckner} for applications to SPDEs.
The meanwhile classical weak convergence approach has been used previously, \eg,
by \cite{Fischer-Gruen,BFH-incompressible,Gess-Gnann,Gess-Gnann-Dareiotis-Gruen},
relying on a Skorohod-type result by Jakubowski \cite{Jakubowski}. For the
identification of the limit of the discrete approximations as a solution,
we use the theory of maximal monotone operators given \eg in \cite{Barbu} in a
similar way as \cite{Liu-Stephan}, and a generalized Donsker-type
invariance principle given in \cite{Erickson}.

\subsection{Notation}\label{2sec:notation}
Let $\cO\subset \R^d$, $d\in\N$, be an open and bounded set. For
$k\geq 0$, let $\cC^k(\cO)$ ($\cC_c^k(\cO)$) be the space of $k$ times
continuously differentiable real-valued functions (with compact support),
where the index $k=0$ will be omitted. Similarly, for a Banach space $V$,
$\cC^k([0,T]; V)$ denotes be the space of $k$ times continuously
differentiable curves in $V$ parametrized by $t\in[0,T]$. Let
$L^2:=L^2(\cO)$ be the Lebesgue space of square integrable functions,
endowed with the norm $\norm{\cdot}_{L^2}$. Let $\Hsob:= \Hsob(\cO)$ be the
Sobolev space of weakly differentiable functions with zero trace, endowed
with the norm $\norm{u}_\Hsob = \norm{\nabla u}_{L^2}$, and let $\Hd$ be
its topological dual space.
For two separable Hilbert spaces $H_1$, $H_2$, we write $L_2(H_1, H_2)$ for
the space of all Hilbert-Schmidt operators from $H_1$ to $H_2$.

We now turn to the finite-dimensional structures which we will use to formulate
numerical convergence results. From now on, we fix
\begin{displaymath}
  \cO = [0,1] \subset \R.
\end{displaymath}
Consider an equidistant grid on the unit
interval with grid points $(x_i)_{i=0}^\gridlim$ with
$h = \frac1{\gridlim}, \gridlim \in \N$ and $x_i = ih$. For
$i=0,\dots,\gridlim-1$ let $y_i = \left(i+\frac1{2}\right)h$. Consider the
sets of intervals $(K_i)_{i=0,\dots,\gridlim}$ and
$(J_i)_{i=0,\dots,\gridlim-1}$ given by
\begin{align}\label{2eq:partition}
  \begin{split}
    &K_0 = [x_0,y_0),\ K_\gridlim =
    \left[y_{\gridlim-1},x_\gridlim\right],
    \ K_i = [y_{i-1},y_i) \text{ for }i=1,\dots,\gridlim-1,\\
    &J_i = [x_i, x_{i+1}) \text{ for }i=0,\dots,\gridlim-1.
  \end{split}
\end{align}
We consider the space of grid functions on $(x_i)_{i=0}^Z$ with zero
boundary conditions, which is isomorphic to $\R^{\gridlim-1}$. We write
$\mathbf{1} = (1, \dots, 1) \in \R^{\gridlim-1}$. We define the following
prolongations.

\begin{Def}\label{2Def:prolong}
  Let $u_h = (u_{h,1}, \dots, u_{h,Z-1})\in \R^{Z -1}$. We then define
  the piecewise linear prolongation with respect to the grid
  $(x_i)_{i=0,\dots,\gridlim}$ with zero-boundary conditions by
  \begin{displaymath}
    \Iplx: \R^{\gridlim-1} \hookrightarrow \Hsob, \ u_h \mapsto
    \plx{u_h} := \sum_{i=0}^{\gridlim-1} \left[u_{h,i} + \frac{u_{h,i+1} - u_{h,i}}{h} (\cdot -
      x_i)\right]\Ind{J_i},
  \end{displaymath}
  using the convention $u_{h,0} = u_{h,Z} = 0$, and the piecewise constant
  prolongation by
  \begin{displaymath}
    \Ipcx: \R^{\gridlim-1} \hookrightarrow L^2, \
    u_h\mapsto \pcx{u} := \sum_{i=1}^{\gridlim-1} u_{h,i}\Ind{K_i}.
  \end{displaymath}
  The image of $\Ipcx$, \ie the space of piecewise constant functions on
  the partition $(K_i)_{i=0}^Z$ with zero Dirichlet boundary conditions,
  will be denoted by $\pc{S}$. The $L^2$-orthogonal projection to this
  space will be denoted by $\pc\Pi$. Note that
  $\Ipcx: \R^{Z-1} \to \pc{S}$ is
  bijective.
\end{Def}

\newcommand\bigzero[2]{\makebox(#1,#2){\text{\huge0}}}
 Let
$\sp{\cdot}{\cdot} := \sp{\cdot}{\cdot}_{l^2}$ denote the inner product arising from the Euclidean
norm $\norm{\cdot}:=\norm{\cdot}_{l^2}$ on $\R^{\gridlim-1}$. For a matrix $A\in \R^{(Z-1)\times(Z-1)}$,
$\norm{A}$ denotes the matrix norm induced by $\norm{\cdot}$.
Let $\Delta_h\in \R^{(Z-1)\times(Z-1)}$ be the matrix corresponding to the
finite difference Laplacian on grid functions on $(x_i)_{i=0}^Z$ with zero
Dirichlet boundary conditions,
\ie
\begin{equation}\label{2eq:Deltah}
\Delta_h = - \frac1{h^2}
  \begin{pmatrix}
    2&-1&&&&\\
    -1&2&-1&&\bigzero{0}{10}&\\
    &-1&\ddots&\ddots&&\\
    &&\ddots&\ddots&-1&\\
    &\bigzero{0}{0}&&-1&2&-1\\
    &&&&-1&2
  \end{pmatrix}.
\end{equation}
Recall that $-\Delta_h$ is symmetric and positive definite (for a formal
argument, see Lemma \ref{2lem-spectralnorm} below). Hence, the following
definition is admissible.
\begin{Def}\label{2def-discr-norms}
  On $\R^{Z-1}$, we
  define the inner products $\sp{\cdot}{\cdot}_0$, $\sp{\cdot}{\cdot}_1$ and
  $\sp{\cdot}{\cdot}_{-1}$ by
  \begin{align*}
    \sp{u}{v}_0 &= h \sp{u}{v},\\
    \sp{u}{v}_{1} &= \sp{-\Delta_h u}{v}_0,\\
    \sp{u}{v}_{-1} &= \sp{(-\Delta_h)^{-1}u}{v}_0
  \end{align*}
  for $u,v\in \R^{Z-1}$. The induced norms are denoted by $\norm{\cdot}_0, \norm{\cdot}_1$ and $\norm{\cdot}_{-1}$.
\end{Def}

\begin{Rem}\label{2Rem:isometry}
  The norm $\norm{\cdot}_0$ in Definition
  \ref{2def-discr-norms} corresponds to the $L^2$ norm on $\cO$ by the fact
  that
  \begin{displaymath}
    \Ipcx: \left(\R^{Z-1}, \norm{\cdot}_0\right) \to \left(\pcx{S_h},
      \norm{\cdot}_{L^2}\right)
  \end{displaymath}
  is an isometry, \ie
  \begin{displaymath}
    \sp{u}{v}_0 = \sp{\Ipcx u}{\Ipcx v}_{L^2}\quad \text{for }u,v\in\R^{Z-1}.
  \end{displaymath}
  Furthermore, Definition \ref{2def-discr-norms} suggests
  to view $\norm{\cdot}_1$ and $\norm{\cdot}_{-1}$ as discrete analogs of the
  $\Hsob$ and $\Hd$ norms on $\cO$, respectively. These connections are more
  subtle and will be
  made more precise in Lemma \ref{2prop-gridfns}, Lemma \ref{2est-H-1} and
  Proposition \ref{2conv-pc} below. 
\end{Rem}

Next, we consider a lattice for the time interval $[0,T]$, $T>0$. For
$\tau>0$ such that $T= N\tau$, $N\in\N$, consider the equidistant grid $(0,
\tau, 2\tau, \dots, N\tau)$. We then define the following prolongations of
grid functions.
\begin{Def}\label{2Def-prolong-time} \label{2prolong-spacetime}
  Let $v = (v_k)_{k=0}^N \subseteq \R$ be a grid function on the
  previously described grid of length $\tau$ and let $t_\tau := \tau \floor{\frac{t}{\tau}}$. Then we define
  the piecewise linear prolongation $\plt{v}: [0,T]\to\R$, the left-sided piecewise constant
  prolongation $\pctl{v}: [0,T]\to\R$ and the right-sided piecewise constant
  prolongation $\pctr{v}: [0,T]\to\R$ by
  \begin{align*}
    \plt{v}(t) = \frac{t-t_\tau}{\tau} v_{\floor{t/\tau}+1} + \frac{t_\tau +
                \tau - t}{\tau} v_{\floor{t/\tau}}, \quad
    \pctl{v}(t) = v_{\floor{t/\tau}}, \quad
    \pctr{v}(t) = v_{\floor{t/\tau}+1}.
  \end{align*}
  For space-time grid functions
  $\left(u_{k,l}\right)_{k=0,\dots,N; l=1,\dots,Z-1} \subset \R$, we define
  the space-time prolongations $\lc{u}, \clc{u}, \crc{u}$ by extending both
  in space and time. Note that it does not matter whether one first carries
  out the space or the time prolongation.
\end{Def}

\section{Continuum limit for the weakly driven Zhang model}\label{2sec:setting-main-result}

For the first main result, let $\phi_2: \R\to 2^\R$ be the maximal monotone
extension of
\begin{equation}\label{2eq:96}
  \tilde\phi_2: \R\ni x\mapsto x\,\Ind{\abs{x}> 1}(x),
\end{equation}
which is a special case of the Zhang nonlinearity in
\eqref{1eq:149-new}. We consider the singular-degenerate stochastic partial differential
inclusion
\begin{align}\label{2eq:43}
  \begin{split}
    \d X(t) &\in \Delta(\phi_2(X(t))) + \mu\d t + \d W(t),\\
    X(0) &= x_0,
  \end{split}
\end{align}
on the interval $(0,1)\subset \R$ with zero Dirichlet boundary
conditions, where $\mu\geq 0$ and $x_0\in L^2 := L^2((0,1))$. In this
setting, $W$ is a cylindrical $\mathrm{Id}$-Wiener process in $L^2$. We
set the stage for the following analysis by defining a notion of solution
to \eqref{2eq:43} in a probabilistically weak sense.
\begin{Def}\label{2Def-weak-soln}
  A triple
  $\left( (\tilde \Om, \tilde\cF, (\tilde\cF_t)_{t\in[0,T]}, \tilde\P),
    \tilde X, \tilde W\right)$, where
  $(\tilde\Om,\tilde\cF, (\tilde\cF_t)_{t\in[0,T]}, \tilde\P)$ is a complete
  probability space endowed with a normal filtration, 
  \begin{displaymath}
    \tilde X\in L^2\left(\tilde\Om\times[0,T]; L^2\right)\cap L^2\left(\tilde\Om;
    L^\infty([0,T]; \Hd)\right)
  \end{displaymath}
  is an $
  (\tilde\cF_t)_{t\in[0,T]}$-progressively measurable process and $\tilde
  W$ is a cylindrical $\mathrm{Id}$-Wiener process with respect to
  $(\tilde\cF_t)_{t\in[0,T]}$ in
  $L^2$, is a weak solution to \eqref{2eq:43}, if there exists an $
  (\tilde\cF_t)_{t\in[0,T]}$-progressively measurable process $\tilde Y\in
  L^2(\tilde\Om\times[0,T]; L^2)$ such that
  \begin{equation}\label{2eq:111}
    \tilde X(t) = x_0 + \mu t + \int_0^t \Delta \tilde Y(r)\,\d r +  \tilde W(t)
  \end{equation}
  is satisfied in $L^2(\tilde\Om\times[0,T]; (L^2)')$, and
  \begin{equation}\label{2eq:112}
    \tilde Y(t) \in \phi_2(\tilde X(t))\quad   (\d t\otimes \d
    x)\text{-almost everywhere }\tilde\P\text{-almost surely}.
  \end{equation}
\end{Def}

It will be shown in Section \ref{2sec:uniqueness} that the processes $(\tilde
X, \tilde W)$ of every weak solution to \eqref{2eq:43} have the same law
with respect to the Borel $\sigma$-algebra of $L^2([0,T];
L^2)\times\cC([0,T]; \Hd)$.

\newcommand{\sfth}{\sqrt{\frac{\tau}{h}}}

We make the following central
assumption for the rest of this article.
\begin{Ass}\label{2ass-tau-h}
  Let $T>0$. We choose sequences $(Z_m)_{m\in\N}, (N_m)_{m\in\N} \subset
  \N$  such that, for $h_m = \frac1{Z_m}$, $\tau_m = \frac{T}{N_m}$
  ($m\in\N$),
  \begin{displaymath}
    h_m\to 0\text{ for }m\to\infty
  \end{displaymath}
  and
  \begin{equation}
    \tag{CFL}
    \frac{\tau_m}{h_m^2} \to
    0\quad\text{for }m\to\infty
  \end{equation}
  is satisfied, which presents a strengthened Courant-Friedrichs-Lewy-type condition.
\end{Ass}

Motivated by the discrete Zhang model,
we construct a family of time-discrete evolution processes on $\R^{Z_m-1}$ as
follows. For each $m\in\N$, we define
$(X_{h_m}^n)_{n\in\{0,1,\dots, N_m+1\}} \subset \R^{Z_m-1}$ iteratively by
\begin{align}\label{2eq:88}
  \begin{split}
  X_{h_m}^{n+1} &= X_{h_m}^n + \tau_m \Delta_{h_m}\tilde\phi_2(X_{h_m}^n) +
  \mu\tau_m + 
  \sqrt{\frac{\tau_m}{h_m}} \xi_{h_m}^n,\quad\text{for }n=0,\dots, N_m,\\
  X_{h_m}^0 &= x_{h_m}^0,
\end{split}
\end{align}
where $(x_{h_m}^0)_{m\in\N}\subset \R^{Z_m-1}$ such that
$\pcx{(x_{h_m}^0)} \to x_0$ in $L^2$, and
$(\xi_{h_m}^{n,l})_{n=0,\dots,N_m; l=1,\dots,Z_m-1}$ are centered independent random
variables identically distributed on a probability
triple $(\Om, \cF, \P)$. We assume that $\E(\xi_{h_1}^{0,1})^2 = 1$
and that
$\E(\xi_{h_1}^{0,1})^6$ is finite.

Recall the extensions of grid functions to functions defined in Definition
\ref{2prolong-spacetime}. We then
have the
following main result, which will be proved at the end of
Section \ref{2sec:constr-weak-solution}.

\begin{thm}\label{2main-thm}  
  Recall the notation from Section \ref{2sec:notation}, let Assumption
  \ref{2ass-tau-h} be satisfied and, for $m\in\N$, consider the process
  $(X_{h_m}^n)_{n=0}^{N_m}$ given by \eqref{2eq:88}. Then, for
  $m\to \infty$, $\lc{X_{h_m}}$ converges in distribution to the unique
  weak solution to \eqref{2eq:43} with respect to the weak topology on
  $L^2([0,T]; L^2)$ and the weak* topology on $L^\infty([0,T]; \Hd)$.
\end{thm}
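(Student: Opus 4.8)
The plan is to prove convergence in distribution via the weak convergence approach (Flandoli–Gatarek / Jakubowski), which splits into three stages: (i) establish uniform-in-$m$ energy estimates on the discrete solutions $(X_{h_m}^n)$, (ii) extract a subsequence converging in law to some limit, and (iii) identify the limit as a weak solution to the inclusion \eqref{2eq:43}, after which uniqueness of the law (promised in Section \ref{2sec:uniqueness}) upgrades subsequential convergence to convergence of the whole sequence.

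Let me think carefully about each stage.

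**Stage (i) — energy estimates.** This is the heart of the proof. I need to test the discrete scheme \eqref{2eq:88} against appropriate quantities to reproduce the continuous energy estimates. The natural thing: apply the discrete $\langle\cdot,\cdot\rangle_{-1}$ inner product (the discrete $H^{-1}$ structure from Definition \ref{2def-discr-norms}). Testing the equation against $(-\Delta_{h_m})^{-1} X^n_{h_m}$ should give control on $\|X^n\|_{-1}$ and, crucially, a coercivity term from $\langle -\Delta_h \tilde\phi_2(X^n), (-\Delta_h)^{-1} X^n\rangle_0 = \langle \tilde\phi_2(X^n), X^n\rangle_0$, which by the structure of $\tilde\phi_2$ (note $x\tilde\phi_2(x) = x^2 \mathds{1}_{|x|>1} \geq 0$ and controls the $L^2$ norm outside $[-1,1]$) yields an $L^2$-type bound. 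The noise contributes a martingale part (handled by BDG/discrete Itô) plus a quadratic-variation term whose size is $\frac{\tau_m}{h_m}\cdot(\text{number of terms})$, which I expect to be $O(1)$ after the $h$-scaling, while the $\mu\tau_m$ drift is harmless. The explicit (forward Euler) nature forces the extra term $\tau_m^2\|\Delta_{h_m}\tilde\phi_2(X^n)\|^2$; this is exactly where the strengthened CFL condition $\tau_m/h_m^2\to 0$ enters, since $\|\Delta_{h_m}\|\sim h_m^{-2}$ and I need this error to be absorbable. I would aim for bounds of the form $\E\sup_n \|X^n_{h_m}\|_{-1}^2 + \E\sum_n \tau_m\|X^n_{h_m}\|_0^2 \leq C$ uniformly in $m$, plus control of $\tilde Y^n := \tilde\phi_2(X^n)$ in $L^2$.

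**Stage (ii) — tightness and Skorohod.** Transfer the discrete bounds to the continuous prolongations $\lc{X_{h_m}}$, $\clc{X_{h_m}}$, $\crc{X_{h_m}}$ using the isometry of Remark \ref{2Rem:isometry} and the comparison of discrete and continuous $H^{\pm 1}$ norms (Lemmas \ref{2prop-gridfns}, \ref{2est-H-1}). The uniform bounds give tightness of the laws in the weak topology of $L^2([0,T];L^2)$ and the weak* topology of $L^\infty([0,T];\Hd)$; tightness in these weak topologies is delicate because bounded sets are not metrizable-compact in the norm topology, so I would use Jakubowski's generalized Skorohod theorem in the quasi-Polish setting (as in \cite{Jakubowski, BFH-incompressible}). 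Passing to a new probability space gives almost-sure convergent versions $(\tilde X_m, \tilde Y_m, \tilde W_m)$ with $\tilde X_m \weakto \tilde X$, $\tilde Y_m \weakto \tilde Y$ weakly in $L^2([0,T];L^2)$ and $\tilde W_m \to \tilde W$ (the latter a cylindrical Wiener limit via a Donsker-type / Erickson invariance principle \cite{Erickson}, since the discrete noise increments are i.i.d.\ with unit variance and are being rescaled to white noise).

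**Stage (iii) — identification of the limit, the main obstacle.** Passing to the limit in the linear relation \eqref{2eq:111} is straightforward from weak convergence: the discrete scheme, summed, gives $\tilde X_m(t) = x^0_{h_m} + \mu t + \int_0^t \Delta_{h_m}\tilde Y_m(r)\,\d r + (\text{discrete stochastic integral})$, and each term converges in the appropriate weak/distributional sense, yielding \eqref{2eq:111}. The genuine difficulty is the inclusion \eqref{2eq:112}, i.e.\ showing $\tilde Y \in \phi_2(\tilde X)$ a.e.: since $\tilde Y_m\weakto\tilde Y$ and $\tilde X_m\weakto\tilde X$ only \emph{weakly}, and $\phi_2$ is multivalued and discontinuous, I cannot pass to the limit pointwise. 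The standard remedy is a monotonicity (Minty–Browder) argument: $\phi_2$ is maximal monotone, so it suffices to verify $\liminf \langle \tilde Y_m, \tilde X_m\rangle \le \langle \tilde Y, \tilde X\rangle$ (or the reverse, depending on sign convention) in expectation over $\tilde\Om\times[0,T]\times\cO$, which is obtained from the energy identity by carefully passing to the limit in the quadratic energy terms — this is where reflexivity of the energy space $L^2$ is used and where the CFL-driven control of the explicit-scheme error is again essential. I expect \textbf{this monotonicity identification to be the principal obstacle}, requiring a discrete Itô formula to produce the energy balance, a lower-semicontinuity argument for the weak limit of the stochastic integral's quadratic variation, and tightness of the coupling so that the cross term $\langle \tilde Y_m, \tilde X_m\rangle$ is controlled from above by the limiting energy. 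Once $(\tilde X, \tilde Y, \tilde W)$ is shown to be a weak solution in the sense of Definition \ref{2Def-weak-soln}, the uniqueness-in-law statement from Section \ref{2sec:uniqueness} together with a Gyöngy–Krylov / Yamada–Watanabe argument \cite{Yamada-Watanabe, KurtzII} promotes convergence of the full sequence in distribution, completing the proof.
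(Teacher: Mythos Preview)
Your proposal is correct and follows essentially the same route as the paper: discrete $H^{-1}$ energy estimates exploiting the CFL condition, Jakubowski--Skorohod compactness with Erickson's invariance principle for the noise, identification of the equation by weak convergence, and the Minty--Browder monotonicity argument for the inclusion, with uniqueness-in-law (via Kurtz) upgrading subsequential to full convergence. The only implementation detail you did not anticipate is that the paper carries out the limsup inequality in a time-weighted space $L^2(\tilde\Om\times[0,T]_\mu;L^2)$ with weight $T-t$ (Definition~\ref{2weighted-T-space}), which arises because the continuous It\^o energy identity (Lemma~\ref{2cont-est}) is available only for a.e.\ $t$ and must be integrated once more before comparison with the discrete estimate.
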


\subsection{A priori estimates and transfer to extensions}\label{2sec:constr-weak-solution}

For the rest of this section, we write $\phi = \phi_2$ and $\tilde\phi =
\tilde\phi_2$, and we drop the index $m$ of the discretization
sequences
\begin{displaymath}
(h_m)_{m\in\N}, (Z_m)_{m\in\N}, (\tau_m)_{m\in\N}, (N_m)_{m\in\N},
\end{displaymath}
writing instead $(h)_{h>0}$ etc. Moreover, the convergence of
sequences and usually nonrelabeled subsequences indexed by $m$ for
$m\to\infty$ will be denoted by $h\to 0$. Expressions like ``for $h>0$''
have to be understood in the sense ``for all elements of $(h_m)_{m\in\N}$''
or ``for all elements of the subsequence at hand''. For $h>0$, we set $(\cF_{h}^n)_{n=0}^{N}$,
$\cF_{h}^n\subseteq \cF$, to be the filtration generated by
$(\xi_{h}^k)_{k=0}^{N}$, \ie
\begin{equation}\label{2eq:149}
  \cF_{h}^n = \sigma\left(\xi_{h}^k: k\in\{0,\dots, n-1\}\right)\quad \text{for
  }n\in\{0,\dots, N\}.
\end{equation}
We have the following bounds on the discrete process $X_h$ defined in
\eqref{2eq:88}.
\begin{lem}\label{2discr-energy-est}
  Let $\tau, h> 0, Z, N\in\N$ as in Assumption \ref{2ass-tau-h}, where we
  choose $h$ small enough for $\tau<1$ and $\frac{\tau}{h^2} \leq \frac1{12}$
  to be satisfied. Then, the discrete process in \eqref{2eq:88} satisfies
  \begin{align}\label{2eq:107}
    \begin{split}
      \norm{X_h^n}_{-1}^2 + \mathcal{S}_{n,h}
      &\leq \norm{x_h^0}_{-1}^2 +
      \sum_{k=0}^{n-1}\bigg(2\sfth\sp{X_h^k}{\xi_h^k}_{-1} + C \tau\\
        &\quad + 2
        \tau^{\frac{3}{2}} h^{-\frac1{2}}
        \sp{\Delta_h\tilde{\phi}(X_h^k) + \mu\mathbf{1}}{\xi_h^k}_{-1}
        + \frac{\tau}{h}
        \norm{\xi_h^k}_{-1}^2\bigg)
    \end{split}
  \end{align}
  and
  \begin{equation}\label{2eq:109}
    \E\norm{X_{h}^n}_{-1}^2 + \E\,\mathcal{S}_{n,h}
    \leq \E\norm{x_h^0}_{-1}^2 + n\tau\,\Tr(-\Delta_h^{-1}) + n\tau C
  \end{equation}
  for all $n\in\{1,\dots,N+1\}$, where
  \begin{displaymath}
    \mathcal{S}_{n,h} \in \left\{\tau \sum_{k=0}^{n-1}
      \sp{X_{h}^k}{\tilde{\phi}(X_{h}^k)}_0, \tau
      \sum_{k=0}^{n-1} \norm{\tilde{\phi}(X_{h}^k)}_0^2, \tau
      \sum_{k=0}^{n-1} \norm{X_{h}^k}_0^2 - n\tau\right\}.
  \end{displaymath}
  Moreover, we have for $n\in\{1,\dots,N\}$
  \begin{equation}\label{2eq:91}
    \frac1{h}\E\norm{\xi_h^n}_{-1}^2 =
    \Tr(-\Delta_h^{-1})
  \end{equation}
  and
    \begin{align}
      \label{2eq:faktor-korrigiert}
      \begin{split}
        &\E\norm{X_h^n}_{-1}^2 + 2 \tau\E
        \sum_{k=0}^{n-1}\sp{X_h^k}{\tilde{\phi}(X_h^k)}_0\\
        &\leq
        \E\norm{x_h^0}_{-1}^2 + n\tau\, \Tr(-\Delta_h^{-1}) +
        2 \mu\tau \E\sum_{k=0}^{n-1}\sp{X_h^k}{\mathbf{1}}_{-1}\\
        &\quad +
        \frac{5\tau}{h^2}
        \left(\E\norm{x_h^0}_{-1}^2 + T\,\Tr(-\Delta_h^{-1}) + C\right).
      \end{split}
    \end{align}
\end{lem}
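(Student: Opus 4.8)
The plan is to derive a discrete Itô-type identity for the squared $\|\cdot\|_{-1}$-norm along the recursion \eqref{2eq:88} and then sum it. Writing the increment as $X_h^{k+1}-X_h^k = \tau\bigl(\Delta_h\tilde\phi(X_h^k)+\mu\mathbf{1}\bigr)+\sqrt{\tau/h}\,\xi_h^k$ and expanding $\|X_h^{k+1}\|_{-1}^2 = \|X_h^k\|_{-1}^2 + 2\langle X_h^k, X_h^{k+1}-X_h^k\rangle_{-1} + \|X_h^{k+1}-X_h^k\|_{-1}^2$, the crucial algebraic point is the discrete integration-by-parts cancellation
\[
\langle X_h^k, \Delta_h\tilde\phi(X_h^k)\rangle_{-1} = -\langle X_h^k, \tilde\phi(X_h^k)\rangle_0,
\]
which holds because $\Delta_h$ and $(-\Delta_h)^{-1}$ cancel inside the pairing $\langle\cdot,\cdot\rangle_{-1}=\langle(-\Delta_h)^{-1}\cdot,\cdot\rangle_0$; this is the discrete analogue of $\langle X,\Delta\phi(X)\rangle_{\Hd}=-\langle X,\phi(X)\rangle_{L^2}$ and yields the dissipation term $-2\tau\langle X_h^k,\tilde\phi(X_h^k)\rangle_0$. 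Collecting the remaining contributions produces, before any estimation, exactly the noise cross-terms $2\sqrt{\tau/h}\,\langle X_h^k,\xi_h^k\rangle_{-1}$ and $2\tau^{3/2}h^{-1/2}\langle\Delta_h\tilde\phi(X_h^k)+\mu\mathbf{1},\xi_h^k\rangle_{-1}$, the pure noise term $(\tau/h)\|\xi_h^k\|_{-1}^2$, the mean-drift term $2\mu\tau\langle X_h^k,\mathbf{1}\rangle_{-1}$, and the quadratic drift term $\tau^2\|\Delta_h\tilde\phi(X_h^k)+\mu\mathbf{1}\|_{-1}^2$, i.e.\ the terms visible in \eqref{2eq:107}.

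Next comes the absorption step, which is where the CFL condition and the specific Zhang nonlinearity enter. Using $\|\Delta_h\tilde\phi(X_h^k)\|_{-1}^2=\|\tilde\phi(X_h^k)\|_1^2$, the spectral bound $\|-\Delta_h\|\le 4/h^2$ from Lemma \ref{2lem-spectralnorm}, and the pointwise identity $x\,\tilde\phi(x)=\tilde\phi(x)^2$ for $\tilde\phi(x)=x\,\Ind{\abs{x}>1}$, the quadratic drift term is bounded by $8(\tau/h^2)\,\tau\langle X_h^k,\tilde\phi(X_h^k)\rangle_0$ up to a $C\tau$ remainder; the assumption $\tau/h^2\le\frac1{12}$ makes this consume at most $\frac23$ of the dissipation. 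The mean-drift term is handled by rewriting $\langle X_h^k,\mathbf{1}\rangle_{-1}=\langle X_h^k,w\rangle_0$ with $w:=(-\Delta_h)^{-1}\mathbf{1}$ bounded uniformly in $h$, so that Young's inequality together with the pointwise bound $\|X_h^k\|_0^2\le\langle X_h^k,\tilde\phi(X_h^k)\rangle_0+1$ absorbs it into the dissipation plus $C\tau$. What remains on the left is the dissipation with coefficient at least $1$, which dominates each of the three admissible choices of $\mathcal{S}_{n,h}$: the first two coincide with it by $x\tilde\phi(x)=\tilde\phi(x)^2$, and the third is smaller since $x^2-1\le x^2\Ind{\abs{x}>1}$. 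Summing over $k$ and using $\tau<1$ to merge the $O(\tau^2)$ contributions into $C\tau$ yields \eqref{2eq:107}.

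The estimate \eqref{2eq:91} is a direct computation: expanding $\|\xi_h^n\|_{-1}^2=h\langle(-\Delta_h)^{-1}\xi_h^n,\xi_h^n\rangle$ and using that the components of $\xi_h^n$ are centred, independent and of unit variance gives $\E\|\xi_h^n\|_{-1}^2=h\,\Tr(-\Delta_h^{-1})$. For \eqref{2eq:109} I take the expectation of \eqref{2eq:107}: both noise cross-terms vanish because $X_h^k$ and $\tilde\phi(X_h^k)$ are $\cF_h^k$-measurable while $\xi_h^k$ is centred and independent of $\cF_h^k$, and the pure noise term contributes $n\tau\,\Tr(-\Delta_h^{-1})$ by \eqref{2eq:91}, leaving the asserted bound. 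Finally, \eqref{2eq:faktor-korrigiert} is obtained by a bootstrap: I keep the dissipation with its full coefficient $2$ and the mean-drift term $2\mu\tau\sum_k\langle X_h^k,\mathbf{1}\rangle_{-1}$ explicitly on the right, and rather than absorbing the quadratic drift I bound $\E\sum_k\tau^2\|\Delta_h\tilde\phi(X_h^k)\|_{-1}^2\le(4\tau/h^2)\,\tau\,\E\sum_k\langle X_h^k,\tilde\phi(X_h^k)\rangle_0$ and then feed in the already-established \eqref{2eq:109} (with $\mathcal{S}$ equal to the dissipation and $n\tau\le T$) to control the factor $\tau\,\E\sum_k\langle X_h^k,\tilde\phi(X_h^k)\rangle_0$; this produces the term $\frac{5\tau}{h^2}\bigl(\E\|x_h^0\|_{-1}^2+T\,\Tr(-\Delta_h^{-1})+C\bigr)$, the constant $5$ absorbing the lower-order $\mu$ contributions.

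I expect the main obstacle to be the absorption step. One must simultaneously control the quadratic drift term, whose natural estimate carries the dangerous factor $1/h^2$, by pairing the spectral bound for $-\Delta_h$ with the Zhang-specific identity $x\tilde\phi(x)=\tilde\phi(x)^2$ so that it is measured against the dissipation rather than against $\|X_h^k\|_0^2$, and then calibrate the CFL threshold $\frac1{12}$ so that this term together with the mean-drift term still leaves a strictly positive multiple of the dissipation on the left. The bootstrap yielding \eqref{2eq:faktor-korrigiert} is a further delicate point, as it crucially requires \eqref{2eq:109} to already be available in the form where $\mathcal{S}_{n,h}$ equals the dissipation.
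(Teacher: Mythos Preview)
Your proposal is correct and follows essentially the same approach as the paper: expand $\|X_h^{k+1}\|_{-1}^2$, use the cancellation $\langle X_h^k,\Delta_h\tilde\phi(X_h^k)\rangle_{-1}=-\langle X_h^k,\tilde\phi(X_h^k)\rangle_0$ together with the Zhang identity $x\tilde\phi(x)=\tilde\phi(x)^2$, absorb the quadratic drift via the spectral bound and the CFL condition, compute the trace for \eqref{2eq:91}, take expectations for \eqref{2eq:109}, and finally bootstrap with \eqref{2eq:109} to obtain \eqref{2eq:faktor-korrigiert}. The only cosmetic difference is that the paper keeps the cross-term $2\mu\tau^2\langle\Delta_h\tilde\phi(X_h^k),\mathbf{1}\rangle_{-1}$ separate rather than grouping $\Delta_h\tilde\phi+\mu\mathbf{1}$ inside the squared norm, which leads to a slightly sharper splitting ($\tfrac13+\tfrac13+\tfrac13$ instead of your $\tfrac23$ plus the mean-drift piece), but both routes leave the required unit of dissipation on the left.
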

\begin{proof}
  For $n\in\{0,\dots,N\}$, we compute
  \begin{align}\label{2eq:108}
    \begin{split}
      &\norm{X_h^{n+1}}_{-1}^2 = \norm{X_h^n + \tau \Delta_h\tilde{\phi}(X_h^n) + \sfth \xi_h^n + \mu\tau\mathbf{1}}_{-1}^2\\
      &= \norm{X_h^n}_{-1}^2 +
      2\tau\sp{X_h^n}{\Delta_h\tilde{\phi}(X_h^n)}_{-1}\\
      &\quad + 2\mu\tau\sp{X_h^n}{\mathbf{1}}_{-1} +
      \tau^2\norm{\Delta_h\tilde{\phi}(X_h^n)}_{-1}^2 +
      2\mu\tau^2\sp{\Delta_h \tilde\phi(X_h^n)}{\mathbf{1}}_{-1}\\
      &\quad+ 2\sfth\sp{X_h^n}{\xi_h^n}_{-1} +
      2 \tau^{\frac{3}{2}} h^{-\frac1{2}}
      \sp{\Delta_h\tilde{\phi}(X_h^n) + \mu\mathbf{1}}{\xi_h^n}_{-1}\\
      &\quad+ \frac{\tau}{h}\norm{\xi_h^n}_{-1}^2 + \mu^2\tau^2 \norm{\mathbf{1}}_{-1}^2.
    \end{split}
  \end{align}
  Towards the first claim, we aim to absorb the terms of the third line of
  \eqref{2eq:108} up to stably-scaled constants into the term
  \begin{displaymath}
    \tau\sp{X_h^n}{\Delta_h\tilde\phi(X_h^n)}_{-1} =
    -\tau\sp{X_h^n}{\tilde\phi(X_h^n)}_{0} = -\tau\norm{\tilde\phi(X_h^n)}_0^2.
  \end{displaymath}
  To this end, using Lemma \ref{lifted-Poincare} in the second step, we compute
  \begin{align*}
    2\mu\tau\sp{X_h^n}{\mathbf{1}}_{-1}
    &\leq
    2C\tau\norm{X_h^n}_{-1}\norm{\mathbf{1}}_{-1} \leq 2\tau C
      \norm{X_h^n}_0\norm{\mathbf{1}}_0\\
    &\leq 2\tau\left(\frac{3}{2} C + \frac1{6}\norm{X_h^n}_0^2\right) \leq
      3\tau C + \frac1{3}\tau\norm{\tilde\phi(X_h^n)}_0^2,
  \end{align*}
  further, by \eqref{2eq:normest}
  \begin{equation}\label{2eq:carefulest-1}
    \tau^2\norm{\Delta_h\tilde\phi(X_h^n)}_{-1}^2
    \leq 4\tau\frac{\tau}{h^2} \norm{\tilde\phi(X_h^n)}_0^2 \leq
    \frac1{3}\tau\norm{\tilde\phi(X_h^n)}_0^2,
  \end{equation}
  and
  \begin{equation}\label{2eq:carefulest-2}
    \abs{2\mu\tau^2\sp{\Delta_h\tilde\phi(X_h^n)}{\mathbf{1}}_{-1}}\leq
    2C\tau^2\sp{\tilde\phi(X_h^n)}{\mathbf{1}}_0 \leq 3\tau^2 C +
    \frac1{3}\tau^2\norm{\tilde\phi(X_h^n)}_0^2.
  \end{equation}
  Furthermore, note that by the definition of $\tilde\phi$, we have for all
  $x\in\R^{Z-1}$
  \begin{equation}\label{2eq:phitrick}
    \sp{x}{\tilde\phi(x)}_0  = \norm{\tilde\phi(x)}_0^2.
  \end{equation}
  Applying these estimates to \eqref{2eq:108} yields \eqref{2eq:107} with
  the first choice for $\mathcal{S}_{n,h}$ by induction.

  Now taking the expectation in \eqref{2eq:108}, we treat the remaining
  non-constant terms as follows. Recall the definition of the filtration
  $(\cF_h^n)_{n=0}^N$ in \eqref{2eq:149} and note that $X_h^n$ is
  $\cF_h^n$-measurable, while $\xi_h^{n}$ is independent of $\cF_h^n$.
  Hence, the two mixed terms on the right-hand side of \eqref{2eq:107}
  vanish, using the tower property of the conditional expectation. For the
  second-to-last term, we notice that
  \begin{equation}\label{2eq:16}
    \frac{\tau}{h}\, \E\norm{\xi_h^n}_{-1}^2 = \frac{\tau}{h}\, \E\sp{-\Delta_h^{-1}\xi_h^n}{h\xi_h^n}
    = \tau\, \E \sp{-\Delta_h^{-1}\xi_h^n}{\xi_h^n} = \tau \,\Tr(-\Delta_h^{-1}),
  \end{equation}
  since for any family $(\xi_i)_{i=1}^{Z-1}$ of random variables with
  $\E(\xi_i \xi_j) = \delta_{ij}$ and for any matrix
  $A\in\R^{(Z-1)\times(Z-1)}$, we have
  \begin{equation}
    \E\sp{A\xi}{\xi} = \E\sum_{i,j=1}^{Z-1} A_{ij}\xi_j\xi_i = \sum_{i,j=1}^{Z-1}
    A_{ij} \E(\xi_j\xi_i) = \sum_{i,j=1}^{Z-1} A_{ij} \delta_{ij}= \Tr(A). 
  \end{equation}
  In particular, \eqref{2eq:91} follows. Collecting all estimates, we conclude by
  induction that
  \begin{equation}\label{2eq:90}
    \E\norm{X_h^n}_{-1}^2 + \tau \sum_{k=0}^{n-1} \E\sp{X_h^n}{\tilde{\phi}(X_h^n)}_0 \leq
    \E\norm{x_h^0}_{-1}^2 + n\tau\, \Tr(-\Delta_h^{-1}) + n\tau C
  \end{equation}
  for $n\in\{0,\dots,N+1\}$, which proves \eqref{2eq:109} for the first
  choice of $\mathcal{S}_{n,h}$.  In view of \eqref{2eq:phitrick}, this
  immediately yields \eqref{2eq:107}
    and \eqref{2eq:109} for the second choice of
  $\mathcal{S}_{n,h}$. The relation $ \abs{\tilde\phi(x)}^2 \geq \abs{x}^2
  - 1$
  extends these statements to the last choice of
  $\mathcal{S}_{n,h}$. Using the estimates \eqref{2eq:carefulest-1} and
  \eqref{2eq:carefulest-2} without absorbing the respective terms, taking
  expectation and summing up, we obtain from \eqref{2eq:108}
  \begin{align*}
    &\E\norm{X_h^n}_{-1}^2 + 2 \tau\E
      \sum_{k=0}^{n-1}\sp{X_h^k}{\tilde{\phi}(X_h^k)}_0\\
    &\leq
    \E\norm{x_h^0}_{-1}^2 + n\tau\, \Tr(-\Delta_h^{-1}) +
      2\tau\mu\,\E\sum_{k=0}^{n-1}\sp{X_h^k}{\mathbf{1}}_{-1}\\
    &\quad +  \left(\frac{4\tau}{h^2} + \frac{\tau}{3}\right)
    \tau \E \sum_{k=0}^{n-1}\norm{\tilde\phi(X_h^k)}_0^2 + \tau n\tau C.
  \end{align*}
  Finally, using that $h\leq 1$ by assumption and \eqref{2eq:109},
  \eqref{2eq:faktor-korrigiert} follows.
\end{proof}

\begin{lem}\label{2discr-BDG-diagonal}
  Let $\tau, h> 0$ and $N,Z\in\N$ as in Assumption \ref{2ass-tau-h}, and let
  $(X_h^n)_{n=0}^N$ be constructed as in \eqref{2eq:88}. Then
  \begin{displaymath}
    \E\max_{n=0,\dots, N}\norm{X_h^n}_{-1}^2 \leq C,
  \end{displaymath}
  where $C$ is independent of $h$.
\end{lem}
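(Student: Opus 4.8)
The plan is to turn the pathwise energy identity of Lemma \ref{2discr-energy-est} into a maximal estimate by a martingale argument. Concretely, I would start from \eqref{2eq:107}, which holds almost surely for every $n$ because it comes from the deterministic expansion \eqref{2eq:108}. Taking for $\mathcal{S}_{n,h}$ the nonnegative choice $\tau\sum_{k=0}^{n-1}\sp{X_h^k}{\tilde\phi(X_h^k)}_0$ and discarding it, I take the maximum over $n\in\{0,\dots,N\}$ and then expectations. Since $\sum_{k=0}^{n-1}C\tau\leq CT$ and $\sum_{k=0}^{n-1}\frac{\tau}{h}\norm{\xi_h^k}_{-1}^2$ are nondecreasing in $n$, this reduces the claim to bounding, uniformly in $h$,
\[
  \E\norm{x_h^0}_{-1}^2,\quad \E\max_n M_1^n,\quad \E\max_n M_2^n,\quad CT,\quad \sum_{k=0}^{N-1}\tfrac{\tau}{h}\E\norm{\xi_h^k}_{-1}^2,
\]
where $M_1^n=\sum_{k=0}^{n-1}2\sfth\sp{X_h^k}{\xi_h^k}_{-1}$ and $M_2^n=\sum_{k=0}^{n-1}2\tau^{3/2}h^{-1/2}\sp{\Delta_h\tilde\phi(X_h^k)+\mu\mathbf{1}}{\xi_h^k}_{-1}$ collect the two mixed noise terms.

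The three non-martingale quantities I would dispose of directly. The deterministic term is just $CT$. The initial term is bounded because $\Ipcx x_h^0\to x_0$ in $L^2$ forces $\norm{x_h^0}_0$ to stay bounded, and $\norm{\cdot}_{-1}\leq C\norm{\cdot}_0$ by the uniform spectral bound $\norm{(-\Delta_h)^{-1}}\leq C$ (discrete Poincaré inequality). The noise-energy sum equals $T\,\Tr(-\Delta_h^{-1})$ by \eqref{2eq:91}, and $\Tr(-\Delta_h^{-1})$ stays bounded because it is a Riemann-type approximation of $\sum_{k\geq1}(\pi k)^{-2}<\infty$, the trace-class property of $(-\Delta)^{-1}$ in one space dimension.

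The martingale pieces are the crux, and the natural tool is a Burkholder--Davis--Gundy / Doob estimate exploiting the diagonal covariance $\E(\xi_h^{k,i}\xi_h^{k,j})=\delta_{ij}$. Since $X_h^k$ and $\tilde\phi(X_h^k)$ are $\cF_h^k$-measurable while $\xi_h^k$ is centred and independent of $\cF_h^k$, both $M_1^n$ and $M_2^n$ are $(\cF_h^n)$-martingales, so Doob's $L^2$ inequality yields $\E\max_n|M_i^n|\leq 2(\E\langle M_i\rangle_N)^{1/2}$. For the predictable quadratic variations I would use the identity $\E[\sp{a}{\xi_h^k}_{-1}^2\mid\cF_h^k]=h^2\norm{(-\Delta_h)^{-1}a}^2$ (valid for $\cF_h^k$-measurable $a$, exactly as in \eqref{2eq:16}). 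With $a=X_h^k$ this gives increments of conditional size $4\tau h\norm{(-\Delta_h)^{-1}X_h^k}^2\leq 4C\tau\norm{X_h^k}_{-1}^2$, so $\E\langle M_1\rangle_N\leq 4CT\sup_k\E\norm{X_h^k}_{-1}^2$, bounded by \eqref{2eq:109}. With $a=\Delta_h\tilde\phi(X_h^k)+\mu\mathbf{1}$, using $(-\Delta_h)^{-1}a=-\tilde\phi(X_h^k)+\mu(-\Delta_h)^{-1}\mathbf{1}$ together with $\norm{v}^2=h^{-1}\norm{v}_0^2$ and $\norm{\mathbf{1}}^2\leq h^{-1}$, I would obtain $\E\langle M_2\rangle_N\leq C\tau^2\big(\tau\sum_k\E\norm{\tilde\phi(X_h^k)}_0^2\big)+C\mu^2\tau^2T$, which even tends to $0$ by the second estimate in \eqref{2eq:109}.

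With all five quantities bounded uniformly in $h$, the assertion follows. I expect the only real difficulty to be the bookkeeping of the $\tau$- and $h$-powers inside the quadratic variations: one must verify that the leftover factors after applying the diagonal-covariance identity recombine, through $N\tau=T$ and $\norm{v}^2=h^{-1}\norm{v}_0^2$, into exactly the dimensionless energy quantities controlled in Lemma \ref{2discr-energy-est}, so that the uniform bounds $\norm{(-\Delta_h)^{-1}}\leq C$ and $\Tr(-\Delta_h^{-1})\leq C$ can be applied.
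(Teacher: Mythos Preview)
Your proposal is correct and follows the same approach as the paper, which gives only a one-line proof (``This follows as an application of the Burkholder--Davis--Gundy inequality in the form of \cite[Theorem~1]{Davis} to the discrete martingales in \eqref{2eq:107}''); you have simply written out explicitly the decomposition into martingale and non-martingale pieces and the bookkeeping of the quadratic variations that the paper leaves implicit. The only cosmetic point is that what you call ``Doob's $L^2$ inequality'' is, in the form $\E\max_n|M_i^n|\leq 2(\E\langle M_i\rangle_N)^{1/2}$, really Doob combined with Cauchy--Schwarz (equivalently BDG with exponent $1$), which is exactly the Davis form cited in the paper.
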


\begin{proof}
  This follows as an application of the Burkholder-Davis-Gundy
  inequality in the form of \cite[Theorem 1]{Davis} to the discrete
  martingales in \eqref{2eq:107}.
\end{proof}

\begin{lem}\label{2discr-cnty-estimate}
  Let $\tau, h> 0$ as in Assumption \ref{2ass-tau-h}.
  Then, there exists a
  constant $C>0$ which only depends on $T$ and $x_h^0$, such that the
  discrete process in \eqref{2eq:88} satisfies
  \begin{displaymath}
    \E\norm{X_h^{n+1} - X_h^n}_{-1}^2  \leq C\frac{\tau}{h^2}\quad
    \text{for all }n\in\{0,\dots,N-1\}.
  \end{displaymath}
\end{lem}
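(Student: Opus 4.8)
The starting point is the one-step recursion \eqref{2eq:88}, which gives the increment
\[
  X_h^{n+1} - X_h^n = \tau\Delta_h\tilde\phi(X_h^n) + \mu\tau\mathbf{1} + \sfth\,\xi_h^n .
\]
I would expand $\norm{X_h^{n+1}-X_h^n}_{-1}^2$ exactly as the square is expanded in \eqref{2eq:108} and then take expectations. Since $X_h^n$ (hence $\Delta_h\tilde\phi(X_h^n)$) is $\cF_h^n$-measurable while $\xi_h^n$ is centred and independent of $\cF_h^n$, the tower property kills both cross terms containing $\xi_h^n$, leaving
\begin{align*}
  \E\norm{X_h^{n+1}-X_h^n}_{-1}^2
  &= \tau^2\E\norm{\Delta_h\tilde\phi(X_h^n)}_{-1}^2
  + 2\mu\tau^2\,\E\sp{\Delta_h\tilde\phi(X_h^n)}{\mathbf{1}}_{-1}\\
  &\quad + \mu^2\tau^2\norm{\mathbf{1}}_{-1}^2
  + \frac{\tau}{h}\,\E\norm{\xi_h^n}_{-1}^2 .
\end{align*}

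The last two terms are immediate: by \eqref{2eq:91} one has $\frac{\tau}{h}\E\norm{\xi_h^n}_{-1}^2 = \tau\,\Tr(-\Delta_h^{-1})$, which is $O(\tau)$ because $\Tr(-\Delta_h^{-1})$ stays bounded as $h\to 0$ (it is the discrete analogue of $\Tr((-\Delta)^{-1})$), and $\mu^2\tau^2\norm{\mathbf{1}}_{-1}^2 = O(\tau^2)$ since $\norm{\mathbf{1}}_{-1}$ is bounded uniformly in $h$. For the two genuine diffusion terms I would reuse the estimates already established inside Lemma \ref{2discr-energy-est}: the first inequality in \eqref{2eq:carefulest-1} gives $\tau^2\norm{\Delta_h\tilde\phi(X_h^n)}_{-1}^2 \leq \frac{4\tau^2}{h^2}\norm{\tilde\phi(X_h^n)}_0^2$, and \eqref{2eq:carefulest-2} gives $\abs{2\mu\tau^2\sp{\Delta_h\tilde\phi(X_h^n)}{\mathbf{1}}_{-1}} \leq 3\tau^2 C + \frac{1}{3}\tau^2\norm{\tilde\phi(X_h^n)}_0^2$. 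Thus everything reduces to controlling $\E\norm{\tilde\phi(X_h^n)}_0^2$.

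This is the crux, and the apparent difficulty is that no uniform-in-$n$ pointwise bound on $\E\norm{\tilde\phi(X_h^n)}_0^2$ is available: the a priori estimates only control the time-summed quantity. The resolution is that a pointwise bound of constant order is not needed. Applying \eqref{2eq:109} at the index $n+1$ with the second choice $\mathcal{S}_{n+1,h} = \tau\sum_{k=0}^{n}\norm{\tilde\phi(X_h^k)}_0^2$ yields $\tau\,\E\sum_{k=0}^{n}\norm{\tilde\phi(X_h^k)}_0^2 \leq C$, with $C$ independent of $h,\tau,n$ (the right-hand side of \eqref{2eq:109} being bounded by $\E\norm{x_h^0}_{-1}^2 + T\,\Tr(-\Delta_h^{-1}) + TC$, which is uniformly bounded since $\pcx{(x_h^0)}\to x_0$ in $L^2$). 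As every summand is nonnegative, a single one is dominated by the whole sum, so $\tau\,\E\norm{\tilde\phi(X_h^n)}_0^2 \leq C$, i.e. $\E\norm{\tilde\phi(X_h^n)}_0^2 \leq C\tau^{-1}$. The loss of one power of $\tau$ is exactly compensated by the extra $\tau$ in front of the diffusion terms, giving $\frac{4\tau^2}{h^2}\E\norm{\tilde\phi(X_h^n)}_0^2 \leq \frac{4C\tau}{h^2}$ and $\frac{1}{3}\tau^2\E\norm{\tilde\phi(X_h^n)}_0^2 \leq \frac{C}{3}\tau$.

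Collecting the four contributions and using $\tau<1$ and $h\leq 1$ (so that both $\tau$ and $\tau^2$ are dominated by $\tau/h^2$), each term is bounded by a constant multiple of $\tau/h^2$, which is the assertion, with $C$ depending only on $T$ and $x_h^0$ through $\E\norm{x_h^0}_{-1}^2$. The only subtle point is the crux above; the remaining estimates are precisely the term-by-term bounds already carried out in the proof of Lemma \ref{2discr-energy-est}.
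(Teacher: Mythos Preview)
Your proof is correct and follows essentially the same route as the paper: expand the increment, kill the cross terms by independence, use \eqref{2eq:91}, \eqref{2eq:carefulest-1}, \eqref{2eq:carefulest-2}, and then control $\E\norm{\tilde\phi(X_h^n)}_0^2$ by bounding a single nonnegative summand by the full time-sum from \eqref{2eq:109}, which costs exactly the factor $\tau^{-1}$ recovered from the extra $\tau$ in the diffusion terms. The paper presents the last step as $\tfrac{\tau^2}{h^2}\E\norm{\tilde\phi(X_h^n)}_0^2 \leq \tfrac{\tau^2}{h^2}\sum_{k=0}^N\E\norm{\tilde\phi(X_h^k)}_0^2 \leq C\tfrac{\tau}{h^2}$, which is the same argument written out slightly differently.
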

\begin{proof}
  We compute
  \begin{align}\label{2eq:92}
  \begin{split}
    &\E\norm{X_h^{n+1} - X_h^n}_{-1}^2
    = \E\norm{\tau \Delta_h\tilde\phi(X_h^n) + \sfth \xi_h^n + \tau\mu\mathbf{1}}_{-1}^2\\
    &= \E\norm{\tau \Delta_h \tilde\phi(X_h^n)}_{-1}^2 + \frac{\tau}{h}
    \E\norm{\xi_h^n}_{-1}^2 + \tau^2\mu^2\norm{\mathbf{1}}_{-1}^2\\
    &\quad + 2\E\sp{\tau
      \Delta_h\tilde\phi(X_h^n) + \tau\mu\mathbf{1}}{\sfth \xi_h^n}_{-1} +
    2\tau^2\mu\,\E\sp{\Delta_h \tilde\phi(X_h^n)}{\mathbf{1}}_{-1}.
  \end{split}
  \end{align}
  As in the proof of
  Lemma \ref{2discr-energy-est}, we have that
  \begin{equation}\label{2eq:94}
    \E\sp{\tau \Delta_h\phi(X_h^n) + \tau\mu\mathbf{1}}{\sfth \xi_h^n}_{-1} = 0
  \end{equation}
  by the independence of $\xi_h^n$ of $\cF_h^n$, where $(\cF_h^n)_{n=0}^N$
  is given as in \eqref{2eq:149}. In view of \eqref{2eq:91} and Lemma
  \ref{2trace-inv-laplace}, one may choose $C$ independent of $h$
  satisfying
  \begin{equation}\label{2eq:95}
    \frac{\tau}{h}
    \E\norm{\xi_h^n}_{-1}^2 \leq \tau C.
  \end{equation}
  Finally, using \eqref{2eq:carefulest-1} and \eqref{2eq:carefulest-2}, we have
  \begin{displaymath}
    \tau^2\E\norm{\Delta_h\tilde\phi(X_h^n)}_{-1}^2 +
    2\tau^2\mu\E\sp{\Delta_h \tilde\phi(X_h^n)}{\mathbf{1}}_{-1} \leq
    C\frac{\tau^2}{h^2}\E\norm{\tilde\phi(X_h^n)}_0^2 + C\tau^2,
  \end{displaymath}
  such that we can use Lemma \ref{2discr-energy-est} and Lemma
  \ref{2trace-inv-laplace} to finish \eqref{2eq:92} by
  \begin{align*}
  \begin{split}
    \E\norm{X_h^{n+1} - X_h^n}_{-1}^2
    &\leq C\frac{\tau^2}{h^2} \sum_{k=0}^N \E\norm{\tilde\phi(X_h^k)}_0^2 + \tau
    C\\
    &\leq C\frac{\tau}{h^2}\left(\E\norm{x_h^0}_{-1}^2 + T\,
        \Tr(-\Delta_h^{-1}) + C\right) \leq C \frac{\tau}{h^2},
  \end{split}
  \end{align*}
  as required.
\end{proof}

The estimates proved above in the discrete setting can be transferred to
the extensions to functions, exploiting Lemma \ref{2est-H-1}.
\begin{Cor}\label{2cts-energy-est}\label{2Cty-est-for-interpol}\label{2conv-interpols}
  Let $\tau, h> 0, N,Z\in\N$ as in Assumption \ref{2ass-tau-h}, with $h$
  small enough for $\frac{\tau}{h^2} \leq \frac1{12}$ to be
  satisfied, and let $X_h$ be constructed as in \eqref{2eq:88}. Then, there
  exists $C>0$ only depending on $T$ (in particular, independent of $h$), such that
  \begin{align}
    \label{2eq:124}
    \max\left\{\E\int_0^T\norm{\lc{X_h}}_{L^2}^2\d t,\,
      \E\int_0^T\norm{\clc{X_h}}_{L^2}^2\d t,\,
      \E\int_0^T\norm{\crc{X_h}}_{L^2}^2\d t,\right\} &\leq C,\\
    \label{2eq:125}
    \E\int_0^T\norm{\tilde\phi(\clc{X_h})}_{L^2}^2\d t &\leq C,\\
    \label{2eq:146}
    \text{and}\quad\E\esssup_{t\in[0,T]}\norm{\lc{X_h}}_\Hd^2 &\leq C.
  \end{align}
  Furthermore,
  \begin{displaymath}
    \sup_{t\in[0,T]}\E\norm{\lc{X_h}(t) - \clc{X_h}(t)}_\Hd^2, \sup_{t\in[0,T]}\E\norm{\lc{X_h}(t) -
      \crc{X_h}(t)}_\Hd^2 \leq C\frac{\tau}{h^2}.
  \end{displaymath}
\end{Cor}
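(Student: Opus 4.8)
The plan is to convert each continuous norm on the left-hand sides into the corresponding discrete norm on $\R^{Z-1}$ and then to read off the bounds from the three discrete a priori estimates of Lemmas~\ref{2discr-energy-est}, \ref{2discr-BDG-diagonal} and \ref{2discr-cnty-estimate}. For the $L^2$-terms this conversion is exact, using the isometry $\Ipcx\colon(\R^{Z-1},\norm{\cdot}_0)\to(\pcx{S_h},\norm{\cdot}_{L^2})$ from Remark~\ref{2Rem:isometry}; for the $\Hd$-terms it is carried out through the comparison between the continuous $H^{-1}$-norm of a piecewise constant function and the discrete $\norm{\cdot}_{-1}$-norm supplied by Lemma~\ref{2est-H-1}, whose $h$-uniform constant is the one genuinely delicate ingredient.

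For \eqref{2eq:124}, fix $t$ and set $n=\floor{t/\tau}$, so that $\clc{X_h}(t)=\Ipcx X_h^n$ and $\crc{X_h}(t)=\Ipcx X_h^{n+1}$ as elements of $\pcx{S_h}$. The isometry gives $\norm{\clc{X_h}(t)}_{L^2}^2=\norm{X_h^n}_0^2$, so integrating in time turns the left-hand integral into the Riemann sum $\tau\sum_{n=0}^{N-1}\E\norm{X_h^n}_0^2$, which is bounded by \eqref{2eq:109} with the last choice of $\mathcal{S}_{n,h}$ (together with the $h$-uniform bound on $\Tr(-\Delta_h^{-1})$ from Lemma~\ref{2trace-inv-laplace}); the same applies to $\crc{X_h}$. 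Since on each time interval $\lc{X_h}(t)$ is the convex combination $\tfrac{t-t_\tau}{\tau}\crc{X_h}(t)+\bigl(1-\tfrac{t-t_\tau}{\tau}\bigr)\clc{X_h}(t)$, its squared $L^2$-norm is dominated by the corresponding convex combination and is likewise controlled. For \eqref{2eq:125} I use that $\tilde\phi$ acts pointwise and that $\Ipcx X_h^n$ is constant on each cell $K_i$, whence $\tilde\phi(\clc{X_h}(t))=\Ipcx(\tilde\phi(X_h^n))$; the isometry then yields $\norm{\tilde\phi(\clc{X_h}(t))}_{L^2}^2=\norm{\tilde\phi(X_h^n)}_0^2$, whose time-integral is bounded by \eqref{2eq:109} with the second choice of $\mathcal{S}_{n,h}$.

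For the supremum bound \eqref{2eq:146} I first apply Lemma~\ref{2est-H-1} to get $\norm{\Ipcx u}_\Hd\le C\norm{u}_{-1}$ with $C$ independent of $h$. On each interval $[n\tau,(n+1)\tau]$ the curve $t\mapsto\lc{X_h}(t)$ is affine in $\Hd$, so $t\mapsto\norm{\lc{X_h}(t)}_\Hd$ is convex and attains its maximum at an endpoint; hence $\esssup_{t\in[0,T]}\norm{\lc{X_h}(t)}_\Hd=\max_{n=0,\dots,N}\norm{\Ipcx X_h^n}_\Hd$. Combining this with the comparison reduces \eqref{2eq:146} to $\E\max_{n=0,\dots,N}\norm{X_h^n}_{-1}^2\le C$, which is precisely Lemma~\ref{2discr-BDG-diagonal}.

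Finally, for the continuity-in-time estimates, writing again $n=\floor{t/\tau}$ and $\lambda(t)=\tfrac{t-t_\tau}{\tau}\in[0,1]$, one has $\lc{X_h}(t)-\clc{X_h}(t)=\lambda(t)\,\Ipcx(X_h^{n+1}-X_h^n)$ and $\lc{X_h}(t)-\crc{X_h}(t)=-(1-\lambda(t))\,\Ipcx(X_h^{n+1}-X_h^n)$, both prefactors having modulus at most $1$. Applying Lemma~\ref{2est-H-1} and then Lemma~\ref{2discr-cnty-estimate} gives $\E\norm{\lc{X_h}(t)-\clc{X_h}(t)}_\Hd^2\le C\,\E\norm{X_h^{n+1}-X_h^n}_{-1}^2\le C\tfrac{\tau}{h^2}$ uniformly in $t$, and identically for $\crc{X_h}$. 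The main obstacle in the whole argument is the $h$-uniform equivalence of Lemma~\ref{2est-H-1}: once the continuous $H^{-1}$-norm of the piecewise constant interpolation is controlled by the discrete $\norm{\cdot}_{-1}$-norm with a constant that does not blow up as $h\to 0$, every assertion follows from the discrete bounds by the elementary manipulations above.
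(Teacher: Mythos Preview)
Your proposal is correct and follows exactly the approach the paper intends: the paper's own proof is the single sentence ``The estimates proved above in the discrete setting can be transferred to the extensions to functions,'' and you have supplied precisely the omitted details, namely the isometry of Remark~\ref{2Rem:isometry} for the $L^2$-bounds, Lemma~\ref{2est-H-1} for the $H^{-1}$-bounds, and the convexity/endpoint argument for the piecewise affine time interpolation. Nothing further is needed.
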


\subsection{Extraction of convergent subsequences}

\begin{Def}\label{2Def-F}
  Let $(X_h^n)_{n=0}^{N}$ and $(\xi_h^{n})_{n=0}^N$ be defined as in
  \eqref{2eq:88}. We then define random variables $Y_h, W_h: \Om\to \R^{(Z-1)(N+1)}$ by
  \begin{displaymath}
    Y_h = \left(\tilde\phi(X_h^n)\right)_{n=0}^N\quad \text{and}
    \quad W_h =
    \left(\sum_{k=0}^{n-1}\sqrt{\frac{\tau}{h}}\xi_h^k\right)_{n=0}^{N+1}.
  \end{displaymath}
  Furthermore, we define $F_h: \Om\to\cC([0,T]\times[0,1])$ to be the spatial
  antiderivative of $\lc{W_h}$, \ie
  \begin{equation}\label{2eq:45}
    F_h(t,x) = \int_0^x \lc{W_h}(t, x')\,\d x'.
  \end{equation}
\end{Def}

\begin{Rem}\label{2discr-noise-cts}
  Note that $F_h$ is continuous in time by the continuity of the piecewise
  linear prolongation, and absolutely continuous in space, since
  $\lc{W_h}(t,\cdot)$ is Lebesgue integrable at any time $t\in[0,T]$.
\end{Rem}

\begin{lem}
  The distributions of $(F_h)_{h>0}$ are tight with respect to the strong
  topology $\tau_C$ of $\cC([0,T]\times[0,1])$ and converge to the
  distribution of the Brownian sheet (for a Definition, see
  \cite[p.\,1]{BS-cap}) in $\cC([0,T]\times[0,1])$. 
\end{lem}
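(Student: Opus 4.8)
The plan is to recognise $F_h$, evaluated at grid points, as a suitably rescaled two-parameter summation process and to prove convergence to the Brownian sheet by the classical two-step scheme of finite-dimensional convergence plus tightness. Writing $W_h^{n,l}=\sqrt{\tau/h}\sum_{k=0}^{n-1}\xi_h^{k,l}$ for the components of $W_h$ and using that $\lc{W_h}(n\tau,\cdot)$ equals $W_h^{n,l}$ on the spatial cell $K_l$, the spatial antiderivative satisfies, up to boundary-cell corrections of order $h$,
\begin{displaymath}
  F_h(n\tau,x_i)\approx h\sum_{l=1}^{i}W_h^{n,l}=\sqrt{\tau h}\sum_{k=0}^{n-1}\sum_{l=1}^{i}\xi_h^{k,l}.
\end{displaymath}
Thus $F_h(n\tau,x_i)$ is $\sqrt{\tau h}$ times a partial sum of the centred, independent, unit-variance variables $\xi_h^{k,l}$ over the discrete rectangle $\{0,\dots,n-1\}\times\{1,\dots,i\}$, which contains $ni$ terms and hence has variance $\tau h\,ni=(n\tau)(ih)\to tx$. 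More generally the overlap of two such rectangles produces the limiting covariance $(s\wedge t)(y\wedge x)$, matching the Brownian sheet.

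For the finite-dimensional distributions I would fix finitely many points $(t_1,x_1),\dots,(t_r,x_r)$ and apply a multivariate central limit theorem for triangular arrays (Lindeberg form, valid since the $\xi_h^{k,l}$ are i.i.d.\ with finite variance and the summands are uniformly asymptotically negligible; the Cram\'er--Wold device reduces to the one-dimensional case). The covariance computation of the previous paragraph shows that the limiting Gaussian vector has exactly the Brownian-sheet covariance, so the finite-dimensional distributions of $F_h$ converge to those of the Brownian sheet.

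The core of the argument is tightness in the strong topology of $\cC([0,T]\times[0,1])$, for which I would use a Kolmogorov--Chentsov criterion on the two-dimensional parameter domain. The increment $F_h(t,x)-F_h(s,y)$ is, up to interpolation errors, a sum of independent centred variables supported on the rectangular annulus between the two rectangles, whose variance is $\lesssim|t-s|+|x-y|$ on the bounded domain. Rosenthal's inequality with exponent $6$, together with the finite sixth moments, then gives
\begin{displaymath}
  \E\abs{F_h(t,x)-F_h(s,y)}^6\leq C\left(|t-s|+|x-y|\right)^3
\end{displaymath}
whenever $|t-s|+|x-y|$ is at least of the grid scale: the leading term is the cube of the increment variance, while the remaining Rosenthal contribution carries a prefactor $(\tau h)^2\to 0$ and is negligible. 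Since the exponent $3$ strictly exceeds the dimension $2$ of the parameter space, Kolmogorov--Chentsov yields a uniform modulus-of-continuity estimate and hence tightness of $(F_h)_{h>0}$; this is exactly the point at which the sixth-moment assumption on $\xi_h^{k,l}$ is used. Combined with the finite-dimensional convergence, tightness forces the limit law to be that of the Brownian sheet. Equivalently, one may package these estimates into the generalised Donsker-type invariance principle of \cite{Erickson} to obtain tightness and identification at once.

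I expect the main obstacle to be the bookkeeping around the non-standard interpolation defining $F_h$: $\lc{W_h}$ is piecewise linear in time and piecewise constant in space, and $F_h$ is its spatial antiderivative rather than the polygonal interpolation of a summation process that appears in the textbook invariance principle. One must therefore (i) control the increment at sub-grid scales, where the Rosenthal bound above degenerates and the H\"older continuity of the piecewise-linear-in-time, absolutely-continuous-in-space interpolation has to be used instead, and (ii) show that replacing $F_h$ by the grid-evaluated partial-sum process together with the boundary-cell corrections perturbs the process by a quantity that vanishes uniformly in $\cC([0,T]\times[0,1])$ as $h\to0$. Both are routine but require care, and rest again on the uniform moment bounds established above.
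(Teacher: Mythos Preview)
Your proposal is correct and, at its final sentence, even anticipates the paper's actual route. The paper does not carry out the two-step programme (finite-dimensional CLT plus Kolmogorov--Chentsov/Rosenthal tightness) by hand; instead it invokes \cite[Theorem 7.6]{Erickson} directly, which packages exactly that argument for two-parameter summation processes. The only work the paper does is the interpolation bookkeeping you flag as the ``main obstacle'': Erickson's result applies to the extension that is piecewise constant \emph{between} lattice points, whereas $F_h$ is built from the extension that is piecewise constant \emph{around} them, so the paper argues that the uniform-continuity (tightness) estimate transfers between the two extensions and that the finite-dimensional distributions of their difference tend to zero in probability.

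The practical difference is one of economy versus transparency. The paper's argument is shorter and offloads the moment estimates to \cite{Erickson}; your hands-on approach is more self-contained and makes explicit that the sixth-moment hypothesis on $\xi_h^{k,l}$ is precisely what drives the Kolmogorov--Chentsov exponent $3>2$. Either route is acceptable, and your identification of the interpolation comparison as the delicate step matches the paper exactly.
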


\begin{proof}
If we had defined the spatial
extension to be piecewise
constant \textit{between} the lattice points, this statement would have
followed immediately from \cite[Theorem 7.6]{Erickson}. Indeed, the process considered
there could be identified with $F_h$. In order to
transfer these results to the extension which is piecewise constant
\textit{around} the lattice points, as used in the present work, we exploit the explicit connection
between these two extensions. In particular, the uniform continuity used to
prove tightness carries over from one extension to the other, and the
convergence in law then follows from the convergence of the
finite-dimensional distributions of the process in \cite{Erickson} 
and the
fact that the finite-dimensional distributions of the difference of the two extensions
converge stochastically to zero.
\end{proof}

Corollary \ref{2cts-energy-est}
implies tightness of the distributions of $(\lc{X_h})_{h>0}$ with respect
to the weak* topology $\tau_w^*$ of $L^\infty([0,T]; \Hd)$ by the
Banach-Alaoglu theorem, and tightness of the distributions of all processes
in Corollary \ref{2cts-energy-est} with respect to the weak topology
$\tau_w$ of $L^2([0,T]; L^2)$ by separability, reflexivity and the
Eberlein-Smulian theorem. As a consequence, we have
tightness of the distributions of the family
\begin{equation}\label{2eq:tightness-tup}
\left((\lc{X_h}, \lc{X_h},
  \clc{X_h}, \crc{X_h}, \clc{Y_h}, F_h)\right)_{h>0}
\end{equation}
with respect to the
product topology of
$(\tau_w^*, \tau_w, \tau_w, \tau_w, \tau_w, \tau_C)$, which is a key
ingredient to prove the following.

\begin{lem}\label{2Prohorov-Skorohod}
  Let $(X_h)_{h>0}$, $(Y_h)_{h>0}$ and $(W_h)_{h>0}$ be defined as in \eqref{2eq:88} and
  Definition \ref{2Def-F}, respectively. Then, there is a probability space $(\tilde\Om, \tilde\cF, \tilde\P)$,
  stochastic processes
  \begin{align*}
    \tilde X
    &\in L^2(\tilde\Om; L^\infty([0,T]; \Hd)) \cap L^2(\tilde\Om; L^2([0,T]; L^2),\\
    \tilde Y
    &\in L^2(\tilde\Om; L^2([0,T]; L^2)),\\
    \tilde W
    &\in L^2(\tilde\Om; \cC([0,T]; \Hd)),
  \end{align*}
  where $\tilde W$ is a cylindrical $\mathrm{Id}$-Wiener process in $L^2$,
  a nonrelabeled
  subsequence $h\to 0$ such that for each $h$ in this subsequence, there
  are random variables
  $\tilde X_h, \tilde Y_h, \tilde W_h: \tilde\Om\to \R^{(N+1)(Z-1)}$, such
  that for
  each $h$ in this subsequence,
  \begin{align}\label{2eq:laws-discr}
    &\cL\left((\tilde X_h, \tilde Y_h, \tilde W_h)\right) = \cL\left((X_h,
    Y_h, W_h)\right),\\
  \label{2eq:laws-emb}
    \begin{split}
    &\cL\left((\lc{\tilde X_h}, \lc{\tilde X_h}, \clc{\tilde X_h}, \crc{\tilde X_h},
      \clc{\tilde Y_h}, \lc{\tilde W_h})\right)\\
    &= \cL\left((\lc{X_h},\lc{X_h}, \clc{X_h}, \crc{X_h}, \clc{Y_h}, \lc{W_h})\right)
  \end{split}
  \end{align}
  with respect to the product topology of $(\tau_w^*, \tau_w, \tau_w,
  \tau_w, \tau_w, \tilde\tau_C)$, and
  $\tilde\P$-almost surely, for $h\to 0$,
  \begin{align*}
    \lc{\tilde X_h}
    &\tows \tilde X \text{ in } L^\infty([0,T];\Hd),\\
    \lc{\tilde X_h}
    &\tow \tilde X, \clc{\tilde X_h} \tow \tilde X, \crc{\tilde X_h} \tow
      \tilde X, \clc{\tilde Y_h} \tow \tilde Y \text{ in } L^2([0,T]; L^2),\\
    \text{and} \quad\lc{\tilde W_h}
    &\to \tilde W \text{ in } \cC([0,T];\Hd).
  \end{align*}
\end{lem}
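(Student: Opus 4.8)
The plan is to combine Prohorov's theorem with a Skorohod-type representation result, where the essential difficulty is that the topologies $\tau_w^*$ and $\tau_w$ are not metrizable, so the classical Skorohod theorem is unavailable. Instead I would invoke the representation theorem of Jakubowski \cite{Jakubowski}, which applies to any topological space carrying a countable family of continuous real-valued functions separating points. For the weak topology $\tau_w$ on the separable reflexive space $L^2([0,T]; L^2)$ such a family is furnished by a countable dense subset of the dual; for the weak* topology $\tau_w^*$ on $L^\infty([0,T];\Hd)=(L^1([0,T];\Hsob))'$ by a countable dense subset of the separable predual $L^1([0,T];\Hsob)$; and on $\cC([0,T]\times[0,1])$ with the strong topology $\tau_C$ the hypothesis is classical. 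Since a finite product of spaces with this property inherits it, Jakubowski's theorem applies to the six-fold product. The already established tightness of the family \eqref{2eq:tightness-tup}, together with Prohorov's theorem, yields a subsequence along which the joint laws converge; Jakubowski's theorem then provides the probability space $(\tilde\Om,\tilde\cF,\tilde\P)$, random variables whose prolongation-tuples satisfy \eqref{2eq:laws-emb}, and their $\tilde\P$-almost sure convergence to a limiting tuple in the product topology.

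It then remains to identify this limit. Since the first four components of \eqref{2eq:tightness-tup} are the prolongations $\lc{X_h},\lc{X_h},\clc{X_h},\crc{X_h}$ of the \emph{same} discrete data, the final estimate of Corollary \ref{2cts-energy-est} shows (under the CFL condition) that the pairwise differences $\lc{X_h}-\clc{X_h}$ and $\lc{X_h}-\crc{X_h}$ tend to zero in $L^\infty([0,T];\Hd)$; transferring this through \eqref{2eq:laws-emb} and combining it with the $\tilde\P$-almost sure weak/weak* convergence of each component forces the four limits to coincide with a single process $\tilde X$. For the noise I would exploit that the sixth component of \eqref{2eq:tightness-tup} is the spatial antiderivative $F_h$ of $\lc{W_h}$: spatial differentiation is continuous from $\cC([0,T]\times[0,1])$ into $\cC([0,T];\Hd)$ (as $\partial_x\colon L^2\to\Hd$ is bounded and uniform-in-$x$ convergence controls the $L^2$-norm in $t$), so setting $\lc{\tilde W_h}:=\partial_x\tilde F_h$ and $\tilde W:=\partial_x\tilde F$ gives $\lc{\tilde W_h}\to\tilde W$ in $\cC([0,T];\Hd)$ and, being a measurable post-composition, preserves the joint laws required in \eqref{2eq:laws-emb}. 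As $\tilde F$ has the law of the Brownian sheet by the preceding lemma on $(F_h)$, a direct covariance computation via integration by parts against $\Hsob$-test functions yields $\E\sp{\tilde W(t)}{f}\sp{\tilde W(s)}{g}=(t\wedge s)\sp{f}{g}_{L^2}$, which together with Gaussianity and independence of increments identifies $\tilde W$ as a cylindrical $\mathrm{Id}$-Wiener process in $L^2$.

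Finally I would recover the finite-dimensional variables $\tilde X_h,\tilde Y_h,\tilde W_h\colon\tilde\Om\to\R^{(N+1)(Z-1)}$ by inverting the prolongations. Each prolongation ($\Ipcx$ and its space-time analogues) is a continuous injective linear map out of a finite-dimensional space, hence a Borel isomorphism onto its finite-dimensional image with Borel-measurable inverse; applying these inverses componentwise to $(\clc{\tilde X_h},\clc{\tilde Y_h},\lc{\tilde W_h})$ defines $\tilde X_h,\tilde Y_h,\tilde W_h$, and the equality of laws \eqref{2eq:laws-emb} then descends to \eqref{2eq:laws-discr}. The asserted integrability of $\tilde X$ and $\tilde Y$ follows by transferring the uniform bounds of Corollary \ref{2cts-energy-est} through the equality of laws and passing to the limit by Fatou's lemma, using lower semicontinuity of the norms under weak and weak* convergence.

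The main obstacle I anticipate is the rigorous application of Jakubowski's representation theorem in the non-metrizable setting, in particular verifying that the mixed product of weak*, weak and strong topologies still admits a countable separating family of continuous functionals, together with the bookkeeping needed to pass from the antiderivative component $F_h$ (in which tightness is naturally available on $\cC([0,T]\times[0,1])$) back to the noise prolongation $\lc{W_h}$ and to ensure that the reconstructed Euclidean variables carry exactly the original joint law \eqref{2eq:laws-discr}.
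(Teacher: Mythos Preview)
Your proposal is correct and follows essentially the same route as the paper: Jakubowski's representation theorem applied to the six-tuple \eqref{2eq:tightness-tup}, identification of the $X$-limits via the difference estimates in Corollary \ref{2cts-energy-est}, recovery of $\tilde W$ as the spatial distributional derivative of the Brownian-sheet limit $\tilde F$, and reconstruction of the Euclidean variables by inverting the prolongations. You supply more detail than the paper on verifying the countable-separating-family hypothesis for the mixed product topology, which the paper leaves implicit; conversely, the paper makes explicit one step you glossed over, namely that the $\tau_w$-limit (in $L^2([0,T];L^2)$) and the $\tau_w^*$-limit (in $L^\infty([0,T];\Hd)$) of $\lc{\tilde X_h}$ are identified by embedding both target spaces into the common superspace $L^2([0,T];\Hd)$.
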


\begin{proof}[Proof of Lemma \ref{2Prohorov-Skorohod}]
  Since this type of result is classical in the framework of the so-called
  weak convergence approach (see \eg
  \cite{Flandoli-Gatarek,Fischer-Gruen,BFH-incompressible,Gess-Gnann-Dareiotis-Gruen}), we only mention
  the main steps. First, one applies the Skorohod-type result by Jakubowski
  (\cite[Theorem 2]{Jakubowski}) to the six-tuple \eqref{2eq:tightness-tup},
  which uses the abovementioned tightness. Having obtained corresponding
  almost surely converging processes $(\lc{\tilde X_h})_{h>0}$ etc.\ on a
  different probability space, we transfer the estimates in Corollary
  \ref{2Cty-est-for-interpol} to these processes and obtain the suitable
  integrability of the limits by the Fatou lemma and weak(*)
  lower-semicontinuity of the norms. Next, we identify the $\tau_w$-limits
  of $(\lc{\tilde X_h})_h$, $(\clc{\tilde X_h})_h$ and $(\crc{\tilde X_h})_h$
  using the last part of Corollary \ref{2Cty-est-for-interpol}, and we
  identify this limit with the $\tau_w^*$-limit of $(\lc{\tilde X_h})_h$
  by embedding the respective spaces into the common superspace
  $L^2([0,T]; \Hd)$. Passing to the distributional spatial derivative of
  $(\tilde F_h)_h$ and its limit $(\tilde F)$ provides $(\lc{\tilde W_h})$
  and $\tilde W$, where the latter is identified as a cylindrical
  $\mathrm{Id}$-Wiener process in $L^2$ by the fact that $\tilde F$ is a
  Brownian sheet. Finally, one identifies the newly-constructed processes
  as images of the respective extension operator, using that the projection
  to the respective finite-dimensional subspace is continuous, and
  constructs pre-images by applying this projection.
\end{proof}

\begin{Rem}
  Expected values with respect to $\tilde\P$ will be denoted by $\tilde\E$.
\end{Rem}

  \subsection{Identification of the limit as a solution and proof of Theorem \ref{2main-thm}}
We now turn to show that the limit processes belong to a weak
solution. As a stochastic basis, we choose $(\tilde\Om, \tilde\cF,
\tilde\P)$ endowed with the augmented filtration
  $(\tilde\cF_t)_{t\in[0,T]}$ of $(\tilde\cF_t')_{t\in[0,T]}$, where
  \begin{equation}\label{2eq:Def-stoch-basis}
    \tilde\cF_t' := \sigma\left(\tilde X|_{\tilde\Om\times[0,t]}, \tilde
      Y|_{\tilde\Om\times[0,t]}, \tilde W|_{\tilde\Om\times[0,t]}\right).
  \end{equation}
  As typical of the weak convergence approach, we
  obtain that $\tilde W$ is a Wiener process with respect to this basis in
  the sense of \cite[Definition 2.1.12]{Roeckner}, which is a consequence
  of the independence of external increments in the discrete dynamics and
  the continuity of the paths of $\tilde W$. We
  continue by showing that $(\tilde X, \tilde Y, \tilde W)$ satisfy
  equation \eqref{2eq:111}.

\begin{lem}\label{2ident-eqn}
  Let $(\tilde X_h, \tilde Y_h, \tilde W_h)$ be the processes from Lemma
  \ref{2Prohorov-Skorohod}. Then,
  \begin{equation}\label{2eq:110}
    \plt{\tilde X_h}(t) = x_h^0 + \int_0^t\Delta_h\pctl{\tilde Y_h}(r)\,\d r +
    \plt{\tilde W_h}(t) + \mu t \mathbf{1}
  \end{equation}
  in $L^2([0,T]; \R^{Z-1})$, $\tilde\P$-almost surely, and the limits in
  Lemma \ref{2Prohorov-Skorohod} satisfy
  \begin{equation}\label{2eq:28}
    \tilde X(t) = x_0 + \int_0^t \Delta \tilde Y(r) \d r + \tilde W(t) +
    \mu t
  \end{equation}
  in $L^2([0,T]; (L^2)')$, $\tilde\P$-almost surely.
\end{lem}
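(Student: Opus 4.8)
The plan is to prove the two identities in turn, and I expect the discrete-to-continuous identification of the Laplacian term to be the only genuinely delicate point. For the discrete equation \eqref{2eq:110} I would first work pathwise with the original process $(X_h,Y_h,W_h)$: summing the recursion \eqref{2eq:88} over $k=0,\dots,n-1$, using $Y_h^k=\tilde\phi(X_h^k)$ and $W_h^n=\sum_{k=0}^{n-1}\sfth\,\xi_h^k$ from Definition \ref{2Def-F}, yields \eqref{2eq:110} at every grid time $t=n\tau$, for every $\omega$. Both sides of \eqref{2eq:110} are continuous and piecewise affine in $t$ with breakpoints on the time grid — on $[n\tau,(n+1)\tau)$ the term $\int_0^t\Delta_h\pctl{Y_h}(r)\,\d r$ has constant slope $\Delta_h Y_h^n$, while $\plt{X_h}$ and $\plt{W_h}$ are piecewise affine by construction — so agreement at the nodes forces agreement on all of $[0,T]$. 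Since the prolongations depend continuously (linearly) on the grid data, the set on which \eqref{2eq:110} holds is Borel in $(X_h,Y_h,W_h)$, and the equality of laws \eqref{2eq:laws-discr} then transfers \eqref{2eq:110} to $(\tilde X_h,\tilde Y_h,\tilde W_h)$ $\tilde\P$-almost surely.

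For the continuous equation \eqref{2eq:28} I would fix $\psi\in\cC_c^\infty((0,1))$ and $\zeta\in L^2([0,T])$, apply $\Ipcx$ to \eqref{2eq:110}, pair with $\psi$ in $L^2$, integrate against $\zeta$, and let $h\to 0$ along the subsequence of Lemma \ref{2Prohorov-Skorohod}, working on the full-measure event where all its convergences hold. The left-hand side and the data term pass to the limit because $\lc{\tilde X_h}\tow\tilde X$ in $L^2([0,T];L^2)$ and $\pcx{(x_h^0)}\to x_0$ in $L^2$, and the drift term because $\pcx{\mathbf 1}\to 1$ in $L^2$. The noise term is the cleanest: since $\lc{\tilde W_h}\to\tilde W$ in $\cC([0,T];\Hd)$ and $\psi\in\Hsob$, the pairing $\sp{\lc{\tilde W_h}(t)}{\psi}$ (which coincides with the $L^2$ pairing, as $\lc{\tilde W_h}(t)\in L^2$) converges uniformly in $t$ to the $\Hd$–$\Hsob$ duality $\sp{\tilde W(t)}{\psi}$. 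Testing directly against the smooth $\psi$, rather than against a discrete test function, is what keeps this term transparent and sidesteps any need to control the $L^2$-growth of the discrete noise.

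The main obstacle is the Laplacian term, since $\Ipcx$ and $\Delta_h$ do not commute; the idea is to move the discrete Laplacian onto the test function by self-adjointness. Writing $\hat\psi$ for the grid function with $\hat\psi_i=h^{-1}\int_{K_i}\psi$, the isometry of Remark \ref{2Rem:isometry} together with the symmetry of $\Delta_h$ with respect to $\sp{\cdot}{\cdot}_0$ gives, for any grid function $v$,
\begin{displaymath}
  \sp{\Ipcx\Delta_h v}{\psi}_{L^2} = \sp{\Delta_h v}{\hat\psi}_0 = \sp{v}{\Delta_h\hat\psi}_0 = \sp{\Ipcx v}{\Ipcx\Delta_h\hat\psi}_{L^2}.
\end{displaymath}
By consistency of the finite difference Laplacian on the smooth, compactly supported $\psi$ one has $\Ipcx\Delta_h\hat\psi\to\Delta\psi=\psi''$ strongly in $L^2$. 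Rewriting the double time integral via Fubini as $\sp{\clc{\tilde Y_h}}{\Theta(\cdot)\,\Ipcx\Delta_h\hat\psi}_{L^2([0,T];L^2)}$ with $\Theta(r):=\int_r^T\zeta(t)\,\d t$, the pairing of the weakly convergent factor $\clc{\tilde Y_h}\tow\tilde Y$ in $L^2([0,T];L^2)$ with the strongly convergent factor converges to $\sp{\tilde Y}{\Theta(\cdot)\,\Delta\psi}_{L^2([0,T];L^2)}$, which unfolds to $\int_0^T\zeta(t)\int_0^t\sp{\tilde Y(r)}{\Delta\psi}_{L^2}\,\d r\,\d t$, i.e.\ the tested form of $\int_0^t\Delta\tilde Y(r)\,\d r$ after integration by parts.

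Collecting the four limits gives, $\tilde\P$-almost surely, the tested version of \eqref{2eq:28} for each fixed $(\psi,\zeta)$. Choosing a single full-measure event on which Lemma \ref{2Prohorov-Skorohod} holds and letting $(\psi,\zeta)$ range over a countable dense family, the identity holds for all such pairs simultaneously, hence for all $\psi\in\cC_c^\infty((0,1))$ and $\zeta\in L^2([0,T])$ by density together with the integrability of the limits, which is exactly \eqref{2eq:28} in $L^2([0,T];(L^2)')$. The difficulty is concentrated in the Laplacian step: because $\tilde\phi$ is discontinuous and degenerate, no strong compactness for $\tilde Y$ is available, so the identification of the limiting diffusion must rest solely on the self-adjointness of $\Delta_h$ and a weak-times-strong convergence argument.
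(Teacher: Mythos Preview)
Your argument is correct, and for the discrete identity \eqref{2eq:110} it coincides with the paper's: both sum the recursion, note piecewise-affine agreement on grid times, and transfer via the equality of laws \eqref{2eq:laws-discr}.

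For the continuum identity \eqref{2eq:28}, however, your route is genuinely different. The paper tests \eqref{2eq:110} in the \emph{discrete $H^{-1}$ pairing} $\sp{\cdot}{\cdot}_{-1}$ against a discretized test function $\eta_h$ with $\pcx{\eta_h}\to\eta$ in $L^2$; the advantage is that $\sp{\Delta_h\pctl{\tilde Y_h}}{\eta_h}_{-1}=-\sp{\pctl{\tilde Y_h}}{\eta_h}_0$ immediately, so all four terms (including the noise) are handled uniformly by a single black-box result, Proposition \ref{2conv-pc}, which converts discrete $\sp{\cdot}{\cdot}_{-1}$ pairings to continuous $\Hd$ pairings. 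You instead test in the $L^2$ pairing against a fixed smooth $\psi$, move $\Delta_h$ onto $\hat\psi$ by self-adjointness, and invoke finite-difference consistency $\Ipcx\Delta_h\hat\psi\to\psi''$; this is more elementary and bypasses Proposition \ref{2conv-pc} entirely, at the price of treating the noise asymmetrically (via the $\Hd$--$\Hsob$ duality, since $\tilde W\notin L^2$) and of a slightly more involved closing step: your tested identity $\dup{\Hd}{E(t)}{\psi}{\Hsob}=\sp{F(t)}{\psi''}_{L^2}$ for $\psi\in\cC_c^\infty$ must first be extended by density to $\psi\in H^2\cap\Hsob$ before substituting $\psi=(-\Delta)^{-1}v$, $v\in L^2$, to recover the $(L^2)'$-formulation the lemma asserts. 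That extension is routine, but worth making explicit.
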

\begin{proof}
  \textit{Step 1:} We first prove \eqref{2eq:110}. We note that by construction of
  the prolongations in use here, \eqref{2eq:110} is equivalent to
  \begin{displaymath}
    \tilde X_h^n = x_h^0 + \tau\sum_{k=0}^{n-1}\Delta_h\tilde Y_h^k +
    \tilde W_h^n + n\tau \mu \mathbf{1}
  \end{displaymath}
  for all $n\in\{0,\dots,N\}$, $\tilde\P$-almost surely, which is verified by
  the construction of $(X_h, Y_h, W_h)$ in \eqref{2eq:88} and Definition
  \ref{2Def-F}, and by the equality of laws in \eqref{2eq:laws-discr}.
  
  \textit{Step 2:} We need to show that $\tilde\P$-almost surely, for every
  $\zeta\in L^2([0,T]; L^2)$,
  \begin{align}\label{2eq:29}
    \begin{split}
    \int_0^T \sp{\tilde X(t)}{\zeta(t)}_{(L^2)'\times L^2}\d t = \int_0^T \sp{x_0 + \int_0^t \Delta \tilde Y(r) \d r +
      \tilde W(t) + \mu t}{\zeta(t)}_{(L^2)'\times L^2}\d t,
  \end{split}
  \end{align}
  where $\sp{u}{v}_{(L^2)'\times L^2} = \sp{-\Delta^{-1}u}{v}_{L^2}$.
  In this step, we first show \eqref{2eq:29} for a test function $\zeta$ of
  the type
  \begin{equation}\label{2eq:38}
    \zeta = \theta(t)\eta,
  \end{equation}
  where $\eta\in L^2$ and $\theta\in L^\infty([0,T])$. By considering
  \eqref{2eq:110} and using an approximation $\eta_h\in\R^{Z-1}$ such that $\pc{\eta} \to \eta$ in $L^2$ for
  $h\to 0$,
  we obtain
  \begin{equation}\label{2eq:30}
    \int_0^T\sp{\plt{\tilde
            X_h}(t)}{\theta(t)\eta_h}_{-1}\d t = \int_0^T\sp{x_h^0 +
          \int_0^t \Delta_h \pctl{\tilde Y_h}(r)\d r +
          \plt{\tilde W_h}(t) + \mu t \mathbf{1}}{\theta(t)\eta_h}_{-1}\d t
  \end{equation}
  $\tilde\P$-almost surely. Lemma \ref{2Prohorov-Skorohod} and Proposition \ref{2conv-pc}
  then allow to pass to the limit obtaining \eqref{2eq:29}, using dominated convergence for the term
  including $\tilde Y_h$.

  \textit{Step 3:} By linearity, \eqref{2eq:29} is also true for linear
  combinations of test functions $\zeta$ of type \eqref{2eq:38} and thus for
  every polynomial. Thus, we obtain a $\tilde\P$-zero set outside of which
  \eqref{2eq:29} is satisfied for any polynomial. Using the density of
  polynomials in $L^2([0,T]\times[0,1])$ given by the Stone-Weierstrass
  theorem, outside this zero set the full statement \eqref{2eq:29} is
  satisfied by passing to the limit of approximating sequences.
\end{proof}

\begin{lem}\label{2cont-est}
  Let $\tilde Y$ and $\tilde W$ be constructed as in Lemma
  \ref{2Prohorov-Skorohod} and define the continuous $(L^2)'$-valued
  process
  \begin{displaymath}
    \tilde Z(t) := x_0 + \int_0^t \Delta \tilde Y(r) \d t + \tilde W(t) +
    \mu t
  \end{displaymath}
  for $t\in[0,T]$. Then, we have
  \begin{displaymath}
    \tilde\E \left(\sup_{t\in[0,T]} \norm{\tilde Z(t)}_\Hd^2\right) < \infty
  \end{displaymath}
  and
  \begin{align}\label{2eq:27}
    \begin{split}
    &\tilde\E\norm{\tilde Z(t)}_\Hd^2  + 2\,\tilde\E\int_0^t\sp{\tilde
      Z(r)}{\tilde Y(r)}_{L^2}\d r\\
    &= \norm{x_0}_\Hd^2 + t\norm{I'}_{L_2(L^2,\Hd)}^2 +
    2\,\tilde\E\int_0^t\sp{\tilde Z(r)}{\mu}_\Hd \d r,
  \end{split}
  \end{align}
  where $I': L^2\hookrightarrow \Hd$ is the canonical embedding (\cf Lemma
  \ref{2trace-inv-laplace}).
\end{lem}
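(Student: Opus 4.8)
The plan is to apply the It\^o formula for the square of the $\Hd$-norm in a variational (Gelfand-triple) framework to $\tilde Z$ and then take expectations so that the martingale term drops out. First I would record that, by \eqref{2eq:28} of Lemma \ref{2ident-eqn}, $\tilde Z = \tilde X$ as $(L^2)'$-valued processes (for a.e.\ $t$, $\tilde\P$-a.s.), so that $\tilde Z$ inherits the regularity $\tilde X\in L^2(\tilde\Om; L^2([0,T]; L^2))$ from Lemma \ref{2Prohorov-Skorohod}. This places the analysis in the Gelfand triple $L^2\hookrightarrow\Hd\hookrightarrow(L^2)'$ with $\Hd$ as pivot space, where the first embedding is exactly the map $I'$ of the statement.

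The decisive algebraic observation is that, for the duality pairing associated with this triple, the drift acts as
\begin{align*}
  \sp{\Delta\tilde Y(r)}{v}_{(L^2)'\times L^2}
  &= \sp{(-\Delta)^{-1}\Delta\tilde Y(r)}{v}_{L^2} = -\sp{\tilde Y(r)}{v}_{L^2}\quad\text{for }v\in L^2.
\end{align*}
In particular $\Delta\tilde Y + \mu\in L^2([0,T]; (L^2)')$ with the dual norm controlled by $\norm{\tilde Y}_{L^2}$, and pairing the drift against $\tilde Z$ produces precisely the $-\sp{\tilde Z}{\tilde Y}_{L^2}$ contribution appearing in \eqref{2eq:27}. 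For the noise I would use that, since $d=1$, the embedding $I'\colon L^2\to\Hd$ is Hilbert-Schmidt (\cf Lemma \ref{2trace-inv-laplace}), so the cylindrical $\mathrm{Id}$-Wiener process $\tilde W$ induces through $I'$ a genuine $\Hd$-valued Wiener process with $\norm{I'}_{L_2(L^2,\Hd)}^2 = \Tr(-\Delta^{-1})<\infty$; this finite Hilbert-Schmidt norm is the source of the It\^o correction term.

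With these structural facts in place, the hypotheses of the variational It\^o formula (see \eg \cite[Theorem 4.2.5]{Roeckner}) are satisfied, and it yields $\tilde\P$-a.s.
\begin{align*}
  \norm{\tilde Z(t)}_\Hd^2 &= \norm{x_0}_\Hd^2 + 2\int_0^t\left(-\sp{\tilde Y(r)}{\tilde Z(r)}_{L^2} + \sp{\tilde Z(r)}{\mu}_\Hd\right)\d r\\
  &\quad + t\norm{I'}_{L_2(L^2,\Hd)}^2 + 2\int_0^t\sp{\tilde Z(r)}{\d\tilde W(r)}_\Hd.
\end{align*}
Taking $\tilde\E$ annihilates the stochastic integral, and rearranging gives exactly \eqref{2eq:27}. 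For the supremum bound I would return to the It\^o formula before taking expectations, estimate the martingale term by the Burkholder-Davis-Gundy inequality, and control its quadratic variation by $\tilde\E\int_0^T\norm{\tilde Z(r)}_\Hd^2\,\d r$; combined with the integrability $\tilde Y\in L^2(\tilde\Om; L^2([0,T]; L^2))$ from Lemma \ref{2Prohorov-Skorohod} and the finiteness of the trace, this yields $\tilde\E(\sup_{t\in[0,T]}\norm{\tilde Z(t)}_\Hd^2)<\infty$.

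The main obstacle is the careful verification that $\tilde Z$ legitimately lives in this Gelfand triple, in particular that the drift $\Delta\tilde Y$ genuinely lands in $(L^2)'$ via the pairing identity above and that the cylindrical noise becomes trace-class in $\Hd$ under $I'$ — which is exactly the point where the restriction $d=1$ is used. Once these facts are secured, the energy identity \eqref{2eq:27} is a direct consequence of the abstract It\^o formula, and the supremum estimate follows by a standard Burkholder-Davis-Gundy argument.
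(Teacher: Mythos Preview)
Your proposal is correct and follows the same route as the paper: identify $\tilde Z$ with $\tilde X$ via Lemma~\ref{2ident-eqn} to secure the $L^2(\tilde\Om\times[0,T];L^2)$ regularity, then apply the variational It\^o formula \cite[Theorem~4.2.5]{Roeckner} in the Gelfand triple $L^2\hookrightarrow\Hd\hookrightarrow(L^2)'$. The paper is simply terser: it cites the theorem and notes that both claims follow directly, whereas you spell out the duality computation for the drift and add a separate Burkholder--Davis--Gundy step for the supremum bound---the latter is in fact already contained in the conclusion of \cite[Theorem~4.2.5]{Roeckner}, so no extra argument is needed there.
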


\begin{proof}
  By Lemma \ref{2ident-eqn}, we have that $\tilde X$ and $\tilde Z$ are in
  the same $\tilde\P\otimes\d t$-equivalence class, and by the construction
  in Lemma \ref{2Prohorov-Skorohod} we know that
  $\tilde X\in L^2(\tilde\Om\times[0,T]; L^2)$. Moreover,
  $\tilde Y\in L^2(\tilde\Om; L^2([0,T]; L^2))$ and progressively
  measurable with respect to $(\tilde\cF_t)_{t\in[0,T]}$ by
  construction. Thus, Itô's formula from \cite[Theorem 4.2.5]{Roeckner}
  applies, which yields both claims.
\end{proof}

\begin{Rem}\label{2Rem:ident-Z-X}
  By \eqref{2eq:28} and the
  definition of $\tilde Z$ above, we have $\tilde Z = \tilde X$ in
  $L^2(\tilde\Om\times[0,T]; (L^2)')$. Furthermore, we have $\tilde X\in
  L^2(\tilde\Om\times[0,T]; L^2)$, such that the injectivity of the embedding
  $L^2\hookrightarrow \Hd \hookrightarrow (L^2)'$, which carries over to an embedding
  \begin{displaymath}
    L^2(\tilde\Om\times[0,T]; L^2) \hookrightarrow
    L^2(\tilde\Om\times[0,T]; (L^2)'),
  \end{displaymath}
  implies that
  $\tilde Z \in L^2(\tilde\Om\times[0,T]; L^2)$ and $\tilde Z = \tilde X$
  in $L^2(\tilde\Om\times[0,T]; L^2)$.
\end{Rem}

In view of \eqref{2eq:112}, it remains to inspect the relation of $\tilde
X$ and $\tilde Y$. To this end, we aim to use \eqref{2eq:27} for $\tilde Z$
replaced by $\tilde X$. Since this is only possible $\d t$-almost surely,
we need to use an integrated version of \eqref{2eq:27}. The resulting
double integral in the second term leads to the following definition, which
will be useful in the proof of Lemma \ref{2limsup-diagonal} below.

\begin{Def}\label{2weighted-T-space}
  We define the measure $\mu$ on $[0,T]$ as the
  measure with density
  \begin{displaymath}
    [0,T]\ni t\mapsto T-t
  \end{displaymath}
  with respect to $\d t$, and we write $[0,T]_\mu$ for the measure space $([0,T], \mu)$.
  Let $\cAOmT \subset L^2(\tilde\Om\times[0,T]_\mu; L^2) \times
  L^2(\tilde\Om\times[0,T]_\mu; L^2)$ be a multivalued operator (which we
  identify with its graph by a slight abuse of notation) defined by
  \begin{equation}\label{2eq:26}
    (X,Y)\in \cAOmT \quad \text{if and only if}\quad Y\in \phi(X) \ \text{for
      almost every }(\tilde\omega, t, x) \in \tilde\Om\times[0,T]\times[0,1].
  \end{equation}
\end{Def}

\begin{Rem}\label{2cA-max-mon}
  Similarly to the proof of Lemma \ref{2cAT-max-mon}, we obtain that the
  operator $\cA$ is maximal monotone.
\end{Rem}

\begin{lem}\label{2limsup-diagonal}
  Let $h>0$, $(\clc{\tilde X_h})_{h>0}, (\clc{\tilde Y_h})_{h>0}, \tilde X, \tilde Y$ be as in Lemma
  \ref{2Prohorov-Skorohod}. Then
  \begin{align*}
    \limsup_{h\to 0} \tilde E \int_0^T (T-t)
    \sp{\clc{\tilde X_h}(t)}{\clc{\tilde Y}_h(t)}_{L^2} \d t\leq \tilde\E
    \int_0^T (T-t)\sp{\tilde X(t)}{\tilde Y(t)}_{L^2}\d t.
  \end{align*}
\end{lem}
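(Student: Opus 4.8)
The plan is to derive from the one-step expansion \eqref{2eq:108} a discrete, time-integrated energy \emph{identity} carrying the weight $(T-t)$, to isolate the weighted bilinear term $\sp{X_h^k}{\tilde\phi(X_h^k)}_0$, and then to pass to the limit $h\to0$; the only genuinely one-sided step will be the lower semicontinuity of the $\Hd$-norm. First I would take expectations in \eqref{2eq:108}. Since $X_h^n$ is $\cF_h^n$-measurable while $\xi_h^n$ is independent of $\cF_h^n$, the two mixed terms vanish, and using $\sp{X_h^n}{\Delta_h\tilde\phi(X_h^n)}_{-1}=-\sp{X_h^n}{\tilde\phi(X_h^n)}_0$ together with \eqref{2eq:91} yields the one-step identity
\begin{align*}
&\E\norm{X_h^{n+1}}_{-1}^2 + 2\tau\,\E\sp{X_h^n}{\tilde\phi(X_h^n)}_0\\
&= \E\norm{X_h^n}_{-1}^2 + \tau\,\Tr(-\Delta_h^{-1}) + 2\mu\tau\,\E\sp{X_h^n}{\mathbf 1}_{-1} + \rho_h^n,
\end{align*}
where $\rho_h^n$ collects the three terms of the last line of \eqref{2eq:108}.

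Summing over $k\in\{0,\dots,n-1\}$, multiplying by $\tau$ and summing over $n\in\{1,\dots,N\}$, the discrete Fubini identity $\tau^2\sum_{n=1}^N\sum_{k=0}^{n-1}a_k=\sum_{k=0}^{N-1}(T-k\tau)\tau\,a_k$ converts each double sum into a single $(T-k\tau)$-weighted sum, giving
\begin{align*}
&2\sum_{k=0}^{N-1}(T-k\tau)\tau\,\E\sp{X_h^k}{\tilde\phi(X_h^k)}_0\\
&= T\norm{x_h^0}_{-1}^2 + \tau^2\tfrac{N(N+1)}{2}\Tr(-\Delta_h^{-1}) + 2\mu\sum_{k=0}^{N-1}(T-k\tau)\tau\,\E\sp{X_h^k}{\mathbf 1}_{-1} + R_h - \tau\sum_{n=1}^N\E\norm{X_h^n}_{-1}^2,
\end{align*}
with $R_h=\tau\sum_{n=1}^N\sum_{k=0}^{n-1}\rho_h^k$. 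Using the isometry of Remark \ref{2Rem:isometry}, the equality of laws \eqref{2eq:laws-discr} and $\clc{\tilde Y_h}=\tilde\phi(\clc{\tilde X_h})$, the left-hand side equals $2\int_0^T(T-t)\tilde\E\sp{\clc{\tilde X_h}}{\clc{\tilde Y_h}}_{L^2}\d t$, while $\tau\sum_{n}\E\norm{X_h^n}_{-1}^2=\int_0^T\tilde\E\norm{\clc{\tilde X_h}}_\Hd^2\d t$ and the $\mu$-sum becomes $2\mu\int_0^T(T-t)\tilde\E\sp{\clc{\tilde X_h}}{\mathbf 1}_{-1}\d t$.

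I would then take $\limsup_{h\to0}$. The remainder is controlled by $R_h=O(\tau/h^2)$ via \eqref{2eq:carefulest-1}, \eqref{2eq:carefulest-2} and the a priori bound \eqref{2eq:125}, hence vanishes under (CFL); the trace term converges to $\tfrac{T^2}{2}\norm{I'}_{L_2(L^2,\Hd)}^2$ since $\tau^2 N(N+1)/2\to T^2/2$ and by Lemma \ref{2trace-inv-laplace}; the initial term converges to $T\norm{x_0}_\Hd^2$; and the linear $\mu$-term converges to $2\int_0^T(T-t)\tilde\E\sp{\tilde X}{\mu}_\Hd\d t$ by the a.s.\ weak convergence $\clc{\tilde X_h}\tow\tilde X$ and the convergence of the discrete pairings in Proposition \ref{2conv-pc}. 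The delicate, and hence main, step is the term with the minus sign: $\limsup_h\big(-\int_0^T\tilde\E\norm{\clc{\tilde X_h}}_\Hd^2\d t\big)=-\liminf_h\int_0^T\tilde\E\norm{\clc{\tilde X_h}}_\Hd^2\d t$. Here I would use that, $\tilde\P$-almost surely, $\clc{\tilde X_h}\tow\tilde X$ in $L^2([0,T];L^2)$ and hence in $L^2([0,T];\Hd)$, so weak lower semicontinuity of the convex norm gives $\liminf_h\int_0^T\norm{\clc{\tilde X_h}}_\Hd^2\d t\geq\int_0^T\norm{\tilde X}_\Hd^2\d t$ pathwise; Fatou's lemma in $\tilde\omega$ upgrades this to expectations. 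As every other term converges, the $\limsup$ of the sum is the sum of the limits plus this one-sided contribution, yielding
\begin{align*}
&2\limsup_{h\to0}\int_0^T(T-t)\tilde\E\sp{\clc{\tilde X_h}}{\clc{\tilde Y_h}}_{L^2}\d t\\
&\leq T\norm{x_0}_\Hd^2 + \tfrac{T^2}{2}\norm{I'}_{L_2(L^2,\Hd)}^2 + 2\int_0^T(T-t)\tilde\E\sp{\tilde X}{\mu}_\Hd\d t - \int_0^T\tilde\E\norm{\tilde X}_\Hd^2\d t.
\end{align*}

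Finally I would integrate the continuous It\^o identity \eqref{2eq:27}, which holds with $\tilde Z=\tilde X$ by Remark \ref{2Rem:ident-Z-X}, over $t\in[0,T]$; using $\int_0^T\int_0^t g(r)\,\d r\,\d t=\int_0^T(T-r)g(r)\,\d r$ and $\int_0^T t\,\d t=T^2/2$, the right-hand side above is recognized as exactly $2\int_0^T(T-t)\tilde\E\sp{\tilde X}{\tilde Y}_{L^2}\d t$. Dividing by $2$ then gives the claim. The anticipated obstacles are the lower-semicontinuity inequality, which alone prevents the statement from being an equality and dictates the direction of the bound, together with the routine but nontrivial identifications of the discrete $\Hd$-type norms and pairings (initial datum and the $\mu$-term) with their continuous counterparts, which rest on the comparison between discrete norms and prolongations established earlier.
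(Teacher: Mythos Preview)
Your argument is correct and follows essentially the same route as the paper: expand the one-step $\norm{\cdot}_{-1}$-identity, sum and apply discrete Fubini to produce the $(T-t)$ weight, show that all terms except the negative norm converge, use weak lower semicontinuity for that one term, and finally match the limit with the integrated It\^o identity \eqref{2eq:27}. Two small corrections: the sum $\tau\sum_{n=1}^N\E\norm{X_h^n}_{-1}^2$ corresponds to the \emph{right}-sided extension $\crc{\tilde X_h}$, not $\clc{\tilde X_h}$ (harmless, since both converge weakly to $\tilde X$); and the passage from $\norm{\cdot}_{-1}$ to $\norm{\cdot}_\Hd$ is not an equality but the inequality $\norm{\Ipcx u}_\Hd\le\norm{u}_{-1}$ of Lemma \ref{2est-H-1}, which, carrying a minus sign, is a second one-sided step alongside the lower semicontinuity you highlighted.
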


\begin{proof}
  We notice that for $f\in L^1([0,T]; \R)$ or measurable $f\geq 0$, we have by
  Fubini's (resp. Tonelli's) theorem
  \begin{align}\label{2eq:Fub}
    \begin{split}
      &\int_0^T\int_0^t f(r)\d r \d t = \int_0^T \int_0^T \Ind{[0,t]}(r)
      f(r)\d r \d t\\
      &= \int_0^T f(r) \int_0^T\Ind{[r, T]}(t)\d t\d r =
      \int_0^T(T-r)f(r)\d r.
  \end{split}
  \end{align}
  Since, due to Lemma
  \ref{2Prohorov-Skorohod}, 
  $(\crc{\tilde X_h})_{h>0}$
  is bounded in $L^2(\tilde\Om; L^2([0,T]; L^2))$ uniformly in $h$, and
  $\crc{\tilde X_h}\tow \tilde X$ $\tilde\P$-almost surely in
  $L^2([0,T]; L^2)$, we have
  \begin{displaymath}
    \crc{\tilde X_h} \tow \tilde X\quad
    \text{in }L^2(\tilde\Om; L^2([0,T]; L^2))
  \end{displaymath}
  for $h\to 0$.
  Hence,we have by weak lower-semicontinuity of
  the norm that
  \begin{equation}\label{2eq:114}
    - \tilde\E\int_0^T\norm{\tilde X(t)}_\Hd^2\d t \geq \limsup_{h\to 0}\left(
      - \tilde\E\int_0^T\norm{\crc{\tilde X_h}(t)}_\Hd^2\d t\right).
  \end{equation}
  Furthermore, by the same arguments as in the proof of Proposition \ref{2conv-pc}, we obtain
  \begin{displaymath}
    \pcx{\left(-\Delta_h^{-1} x_h^0)\right)} \tow -\Delta^{-1}x_0\quad \text{in }L^2\
    \text{for }h\to 0,
  \end{displaymath}
  which allows to compute
  \begin{align}\label{2eq:init-val-ident}
    \begin{split}
      \lim_{h\to 0}\norm{x_h^0}_{-1}^2 = \lim_{h\to 0}\sp{-\Delta_h^{-1}x_h^0}{x_h^0}_0
      &= \lim_{h\to 0}\sp{\pcx{(-\Delta_h^{-1}x_h^0)}}{\pcx{(x_h^0)}}_{L^2}\\
      &= \sp{-\Delta^{-1}x_0}{x_0}_{L^2}
      = \norm{x_0}_\Hd^2.
    \end{split}
  \end{align}
  For each $h>0$ in the subsequence of Lemma \ref{2Prohorov-Skorohod},
  consider $\tilde X_h$ and $\tilde Y_h$ as constructed in Lemma
  \ref{2Prohorov-Skorohod}. Then, by \eqref{2eq:Fub} and Remark
  \ref{2Rem:isometry},
  we obtain
  \begin{align}
    \nonumber
    &\limsup_{h\to 0} \tilde\E\int_0^T(T-t)\sp{\clc{\tilde X_h}(t)}{\clc{\tilde
      Y_h}(t)}_{L^2} \d t\\
    \nonumber
    &= \limsup_{h\to 0} \int_0^T \tilde\E\int_0^t\sp{\clc{\tilde
      X_h}(r)}{\clc{\tilde Y_h(r)}}_{L^2} \d r\,\d t\\  
    \label{2eq:Barbu1}
    &= \limsup_{h\to 0}\int_0^T\tilde\E\int_0^t\sp{\pctl{\tilde X_h}(s)}{\pctl{\tilde
      Y_h}(s)}_0\d s\,\d t.
  \end{align}
  Writing $t_\tau = \floor{t/\tau}\tau$ and using the definition of the
  left-sided piecewise constant embedding
  embedding, the positive sign of
  $\sp{\pctl{\tilde X_h}}{\pctl{\tilde Y_h}}_0$
  $\tilde\P\otimes\d t$-almost everywhere and Lemma \ref{2discr-energy-est}, we continue by
  \begin{align}
    \nonumber
    &\eqref{2eq:Barbu1}
    = \limsup_{h\to 0} \int_0^T
      \tilde\E\left[\sum_{n=0}^{\floor{t/\tau}}\tau\sp{\tilde X_h^n}{\tilde Y_h^n}_0
      - \int_t^{t_\tau + \tau}\sp{\pctl{\tilde X_h}(s)}{\pctl{\tilde
      Y_h}(s)}_0\d s\right]\d t\\
    \begin{split}\label{2eq:Barbu2}
    &\leq \frac1{2} \limsup_{h\to
      0}\left(-\int_0^T\tilde\E\norm{\tilde
      X_h^{\floor{t/\tau}+1}}_{-1}^2\d t\right) + \lim_{h\to 0}
  \int_0^T \tilde\E \sum_{n=0}^{\floor{t/\tau}} \tau\mu \sp{\tilde
    X_h^n}{\mathbf{1}}_{-1}\d t\\
    &\quad +
      \frac1{2}\lim_{h\to 0} \int_0^T\norm{x_h^0}_{-1}^2\d t + \frac1{2}\lim_{h\to 0}
      \int_0^T(t_{\tau}+\tau)\,\Tr(-\Delta_h^{-1})\,\d t\\
    &\quad + \frac1{2}\lim_{h\to 0}\frac{5\tau}{h^2}
    \left(\E\norm{x_h^0}_{-1}^2 + T\,\Tr(-\Delta_h^{-1}) + C\right).
  \end{split}
  \end{align}
  For the second term on the right
  hand side, we compute
  \begin{align*}
    \lim_{h\to 0} \int_0^T\tilde\E \sum_{n=0}^{\floor{t/\tau}} \tau\mu
    \sp{\tilde X_h^n}{\mathbf{1}}_{-1}\d t
    &= \lim_{h\to 0}\int_0^T \mu\,\tilde\E \sp{\sum_{n=0}^{\floor{t/\tau}}
      \tau\tilde X_h^n}{\mathbf{1}}_{-1}\d t\\
    &= \lim_{h\to 0}\int_0^T\mu\,\tilde\E\int_0^{T} \sp{\pctl{\tilde
      X_h}(s)}{\mathbf{1} \Ind{[0,t_\tau+\tau]}(s)}_{-1}\d s\,\d t.
  \end{align*}
  We first show that the expected value converges $\d t$-almost
  everywhere. To this end, note that for $h\to 0$
  \begin{displaymath}
    \pcx{(\mathbf{1} \Ind{[0,t_\tau + \tau]})} \to \Ind{[0,t]}\quad
    \text{in } L^2([0,T]; L^2)
  \end{displaymath}
  and
  \begin{displaymath}
    \clc{\tilde X_h} \tow \tilde X \quad\text{in }L^2([0,T]; L^2)\
    \tilde\P\text{-almost surely}
  \end{displaymath}
  by Lemma \ref{2Prohorov-Skorohod}. Hence, Proposition \ref{2conv-pc}
  yields
  \begin{displaymath}
    \int_0^{T} \sp{\pctl{\tilde
        X_h}(s)}{\mathbf{1} \Ind{[0,t_\tau+\tau]}(s)}_{-1}\d s \to \int_0^T
    \sp{\tilde X(s)}{\Ind{[0,t]}(s)}_\Hd\d s
  \end{displaymath}
  $\tilde\P$-almost surely. Furthermore,
  \begin{align}\label{2eq:L2Om-bound}
    \begin{split}
    \tilde\E \abs{\int_0^{T} \sp{\pctl{\tilde
    X_h}(s)}{\mathbf{1} \Ind{[0,t_\tau+\tau]}(s)}_{-1}\d s}^2
    &\leq \tilde\E\int_0^T \norm{\pctl{\tilde X_h}(s)}_{-1}^2 \norm{\mathbf{1}}_{-1}^2\d
      s\\
    &\leq C \tilde\E\int_0^T \norm{\clc{\tilde X_h}(s)}_\Hd^2\d s\\
    &\leq C \tilde\E\int_0^T \norm{\clc{\tilde X_h}(s)}_{L^2}^2\d s \leq C,
  \end{split}
  \end{align}
  where the last step is due to Corollary \ref{2cts-energy-est}. Hence, for $h\to 0$,
  \begin{displaymath}
    \tilde\E\int_0^{T} \sp{\pctl{\tilde
        X_h}(s)}{\mathbf{1} \Ind{[0,t_\tau+\tau]}(s)}_{-1}\d s \to
    \tilde\E\int_0^T \sp{\tilde X(s)}{\Ind{[0,t]}(s)}_\Hd\d s
  \end{displaymath}
  $\d t$-almost everywhere. The calculation \eqref{2eq:L2Om-bound} also
  justifies using the dominated convergence theorem for the outer
  integral. Using these considerations, the definition of the right-sided
  piecewise constant embedding, \eqref{2eq:init-val-ident}, Lemma
  \ref{2trace-inv-laplace} and Lemma \ref{2est-H-1}, we obtain
  \begin{align}
    \nonumber
    &\eqref{2eq:Barbu2} =
    \frac1{2}\limsup_{\h\to
      0}\left(-\int_0^T\tilde\E\norm{\pctr{\tilde X_h}(t)}_{-1}^2\d
      t\right) + \int_0^T\E\int_0^t\sp{\tilde X(s)}{\mu}_\Hd\d s
      \,\d t\\
    \nonumber
    &\quad + \frac1{2}\int_0^T\norm{x_0}_\Hd^2\d t +
      \frac1{2}\int_0^T t \,\norm{I'}_{L_2(L^2,\Hd)}^2\d t\\
    \begin{split}\label{2eq:Barbu3}
    &\leq \limsup_{\h\to
      0}\left(-\int_0^T\tilde\E\norm{\crc{\tilde X_h}(t)}_\Hd^2\d t\right)
    + \int_0^T\E\int_0^t\sp{\tilde X(s)}{\mu}_\Hd\d s\,\d t\\
    &\quad + \frac1{2}\int_0^T\norm{x_0}_\Hd^2\d t +
    \frac1{2}\int_0^T t \,\norm{I'}_{L_2(L^2,\Hd)}^2\d t.
    \end{split}
  \end{align}
  Using \eqref{2eq:114}, Lemma \ref{2cont-est}, Remark
  \ref{2Rem:ident-Z-X} and the integrability from Lemma \ref{2Prohorov-Skorohod}, we obtain
  \begin{align*}
    \eqref{2eq:Barbu3}
    &\leq -\frac1{2}\int_0^T\tilde\E\norm{\tilde
      X(t)}_\Hd^2\d t + \int_0^T\E\int_0^t\sp{\tilde
      X(s)}{\mu}_\Hd\d s\,\d t\\
    &\quad + \frac1{2}\int_0^T\norm{x_0}_\Hd^2\d t +
      \frac1{2}\int_0^T t \norm{I'}_{L_2(L^2,\Hd)}^2 \d t \\
    &= \int_0^T\tilde\E \int_0^t\sp{\tilde X(s)}{\tilde
      Y(r)}_{L^2}\d s\,\d t = \tilde\E \int_0^T(T-t)\sp{\tilde X(t)}{\tilde
      Y(t)}_{L^2}\d t,
  \end{align*}
  which finishes the proof.
\end{proof}

\begin{proof}[Proof of Theorem \ref{2main-thm}.]
  By Lemma \ref{2Prohorov-Skorohod},
  we have that a (nonrelabeled) subsequence of
  $\left(\lc{\tilde X_h}\right)_{h>0}$ converges to $\tilde X$ weakly
  in $L^2([0,T]; L^2)$ and weakly* in $L^\infty([0,T]; \Hd)$,
  $\tilde\P$-almost surely, which implies by the Slutsky theorem (\cf
  \cite[Theorem 13.18]{Klenke}) that
  \begin{displaymath}
    \cL\left(\lc{\tilde X_h}\right) \to \cL(\tilde X)
  \end{displaymath}
  with respect to the weak topology in $L^2([0,T]; L^2)$ and the weak*
  topology in $L^\infty([0,T]; \Hd)$. Since we also have by Lemma
  \ref{2Prohorov-Skorohod} that
  $\cL\left(\lc{\tilde X_h}\right) = \cL\left(\lc{X_h}\right)$ in both
  spaces, these convergence results transfer to $\cL\left(\lc{X_h}\right)$.

  We next show that
  $\left( (\tilde \Om, \tilde\cF, (\tilde\cF_t)_{t\in[0,T]}, \tilde\P),
    \tilde X, \tilde W\right)$ as constructed in Lemma
  \ref{2Prohorov-Skorohod} and \eqref{2eq:Def-stoch-basis}, is a weak
  solution to \eqref{2eq:43} in the sense of Definition
  \ref{2Def-weak-soln} belonging to the process $\tilde Y$ given in Lemma
  \ref{2Prohorov-Skorohod}. Considering the definition of the filtration
  $(\tilde\cF_t)_{t\in[0,T]}$, progressive measurability of $\tilde X$ and
  $\tilde Y$ is clear by construction, and $\tilde W$ is
  a cylindrical $\mathrm{Id}$-Wiener process in $L^2$ with respect to
  $(\tilde\cF_t)_{t\in[0,T]}$. Equality \eqref{2eq:111} is proved in Lemma
  \ref{2ident-eqn}. Hence, it only remains to show \eqref{2eq:112}, or,
  equivalently, $(\tilde X, \tilde Y) \in \cA$, which, according to \cite[Corollary 2.4]{Barbu}, can be done by
  proving
  \begin{align}
    \label{2eq:56}
    &\left(\clc{\tilde X_h}, \clc{\tilde Y_h}\right) \in \cA\quad \text{for all
      }h\in (0,1],\\
    &\begin{cases}\label{2eq:57}
      \clc{\tilde X_h} \tow \tilde X \quad\text{in }
      L^2(\tilde\Om\times[0,T]_\mu;L^2),\\
      \clc{\tilde Y_h} \tow \tilde Y \quad\text{in }
      L^2(\tilde\Om\times[0,T]_\mu;L^2),
    \end{cases}
    \\
    \label{2eq:58}\text{and}\quad
    &\limsup_{h\to 0} \tilde\E\int_0^T(T-t)\sp{\clc{\tilde X_h}}{\clc{\tilde
      Y_h}}_{L^2} \d t \leq \tilde\E\int_0^T(T-t)\sp{\tilde X}{\tilde Y}_{L^2}\d t.
  \end{align}

  \textit{Ad \eqref{2eq:56}:} We notice that by Lemma \ref{2Prohorov-Skorohod} and
  Definition \ref{2Def-F}, we have $\tilde\P$-almost surely
  \begin{displaymath}
    \tilde Y_h = \tilde\phi(\tilde X_h),
  \end{displaymath}
  and hence
  \begin{equation}\label{2eq:113}
    \clc{\tilde Y_h} = \tilde\phi\left(\clc{\tilde X_h}\right) \in \phi\left(\clc{\tilde X_h}\right)
  \end{equation}
  $\tilde\P$-almost surely in $L^2([0,T]; L^2)$. By \cite[Korollar
  V.1.6]{Elstrodt}, this implies that \eqref{2eq:113} is satisfied for
  almost every $(\omega, t,x)\in\tilde\Om\times[0,T]\times[0,1]$, which is
  equivalent to \ref{2eq:56}.
  
  \textit{Ad \eqref{2eq:57}:} By Lemma \ref{2Prohorov-Skorohod}, we have
  \begin{equation}\label{2eq:87}
    \clc{\tilde X_h} \tow \tilde X\quad \text{and} \quad
    \clc{\tilde Y_h}\tow \tilde Y\quad \text{in }L^2(\tilde\Om; L^2([0,T]; L^2))
  \end{equation}
  for $h\to 0$. Furthermore,
  for $\zeta \in L^2(\tilde\Om\times[0,T]_\mu; L^2)$, we note that
  \begin{displaymath}
    \tilde\E\int_0^T\norm{(T-t)\zeta}_{L^2}^2\d t \leq T\,
    \tilde\E\int_0^T(T-t)\norm{\zeta}_{L^2}^2\d t =
    T\norm{\zeta}_{L^2(\tilde\Om\times[0,T]_\mu; L^2)}^2,
  \end{displaymath}
  which yields that $(T-t)\zeta\in L^2(\tilde\Om\times[0,T]; L^2)$. Thus,
  for $h\to 0$, we have
  \begin{align*}
    &\tilde\E\int_0^T\sp{\clc{\tilde X_h}(t)}{\zeta(t)}_{L^2}\mu(\d t) = \tilde\E \int_0^T\sp{\clc{\tilde X_h}(t)}{(T-t)\zeta(t)}_{L^2}\d t\\
    &\to \tilde\E\int_0^T\sp{\tilde X(t)}{(T-t)\zeta(t)}_{L^2}\d t = \tilde\E\int_0^T\sp{\tilde
      X(t)}{\zeta(t)}_{L^2} \mu(\d t),
  \end{align*}
  as required. For $\tilde Y$, an analogous calculation applies.
  
  \textit{Ad \eqref{2eq:58}:} This is proved in Lemma
  \ref{2limsup-diagonal}.
  
  The same course of arguments also applies to any subsequence of
  $(h_m)_{m\in\N}$, which means that each subsequence of
  $(\lc{X_h})_{h>0}$ contains a subsubsequence converging in law to a
  weak solution of \eqref{2eq:43}. Since every weak solution to
  \eqref{2eq:43} is distributed according to the same law by Theorem
  \ref{2uniqueness-thm}, each of these subsubsequences converges in law to
  the same limit, which implies convergence in law of the whole
  sequence. This completes the proof.
\end{proof}

\section{Continuum limit for the deterministic BTW model}\label{2sec:det-analysis}

Towards the second main result, we still use
Assumption \ref{2ass-tau-h}. For each $m\in\N$, we then define
$(u_{h_m}^n)_{n\in\{0,\dots, N_m +1\}}  \subset \R^{Z_m-1}$ iteratively by
\begin{align}\label{eq:BTWdiscr}
  \begin{split}
    u_{h_m}^{n+1} &= u_{h_m}^n + \tau_m \Delta_{h_m}\tilde\phi_1(u_{h_m}^n)\quad\text{for }n=0,\dots, N_m,\\
    u_{h_m}^0 &= u_{h_m}^*,
\end{split}
\end{align}
where $(u_{h_m}^*)_{m\in\N}\subset \R^{Z_m-1}$ such that
$\pcx{(u_{h_m}^*)} \to u_0$ in $L^2$ for $m\to\infty$ for some
$u_0\in L^2$. The process in \eqref{eq:BTWdiscr} then is the deterministic
part of the one-dimensional version of the BTW model as introduced
above. Its
scaling limit candidate is the singular-degenerate partial differential
equation
\begin{align}\label{2eq:150}
  \begin{split}
    \partial_t u(t) &\in \Delta(\phi_1(u(t)),\\
    u(0) &= u_0,
  \end{split}
\end{align}
on a bounded interval $(0,1)\subset\R$ with zero Dirichlet boundary
conditions, where $\phi_1: \R\to 2^\R$ is the maximal monotone extension
of $\tilde\phi_1$ (see \eqref{1eq:148-new}). Furthermore, let
\begin{equation}\label{2eq:Def-psi}
  \psi: \R\to [0,\infty),\quad \psi(x) = \int_0^x \tilde\phi_1(y)\d y =
  \Ind{\R\setminus[-1,1]}(x) (\abs{x} - 1),
\end{equation}
and $\vphi: \Hd \to [0,\infty)$,
\begin{equation}\label{2eq:Def-vphi}
  \vphi(u)=
  \begin{cases}
    \norm{\psi(u)}_{TV},\quad & \text{if }u\in \cM\cap\Hd,\\
    +\infty, & \text{else,}
  \end{cases}
\end{equation}
where the precise definition
of the convex functional of a measure is given in
\cite{Neuss-SVI-SD}. We then define the following notion of solution, which is
a special case of a stochastic variational inequality (SVI) solution (\cf
\cite{Neuss-SVI-SD} for a more detailed analysis). In the spirit of
Clément \cite{Clement}, we will refer to this as an EVI (evolution
variational inequality) solution.

\begin{Def}[EVI solution] \label{2Def-VI-soln} Let $u_0\in\Hd$, $T>0$. We
  say that $u\in \cC([0,T];\Hd)$ is an EVI solution to \eqref{2eq:150}
  if the following conditions are satisfied:
  \begin{enumerate}[label=(\roman*)]
  \item (Regularity)
    \begin{displaymath}
      \vphi(X)\in L^1([0,T]).
    \end{displaymath}
  \item (Variational inequality) For each
    $G\in L^2([0,T];\Hd)$, and $Z\in L^2([0,T];L^2)\cap \cC([0,T];\Hd)$ solving the
    equation
    \begin{displaymath}
      Z(t)-Z(0) = \int_0^tG(s)\,\d s \quad\text{for
        all }t\in[0,T],
    \end{displaymath}
    we have
    \begin{align}
      \begin{split}
        &\norm{u(t)-Z(t)}_\Hd^2 + 2\int_0^t\vphi(u(r))\d r\\
        &\leq \norm{u_0-Z(0)}_\Hd^2 + 2\int_0^t\vphi(Z(r))\d r\\
        &\quad-2\,\int_0^t\sp{G(r)}{u(r)-Z(r)}_\Hd\d r,
      \end{split}
    \end{align}
    for almost all $t\in[0,T]$.
  \end{enumerate}
\end{Def}

\begin{Rem}
  The existence and uniqueness of solutions to \eqref{2eq:150} in this sense is
  shown in \cite{Neuss-SVI-SD} with noise coefficient chosen to
  be zero. Furthermore, note that the previous definition contains a slight
  abuse of notation. In closer analogy to \cite{Clement}, a solution in
  this sense would be
  called an integral solution to the EVI \eqref{2eq:150}.
\end{Rem}

 Then, we have the
following result, which will be proved at the end of Section \ref{2sec:det-analysis}.
\begin{thm}\label{2BTW-thm}
  Recall the notation from Section \ref{2sec:notation} and let Assumption
  \ref{2ass-tau-h} be satisfied. Then, the process $\lc{u_{h_m}}$ obtained
  from \eqref{eq:BTWdiscr} converges weakly* to the EVI solution of
  \eqref{2eq:150} in $L^\infty([0,T]; \Hd)$ for $m\to\infty$.
\end{thm}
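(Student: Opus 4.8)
The plan is to avoid the monotone-operator argument used for the Zhang model, whose concluding step (\cite[Corollary~2.4]{Barbu}) relied on reflexivity of the energy space, and instead to verify the EVI of Definition~\ref{2Def-VI-soln} directly for a weak* accumulation point of the scheme. The structural observation driving everything is that \eqref{eq:BTWdiscr} is exactly the explicit Euler discretization of the $\norm{\cdot}_{-1}$-gradient flow of the convex functional $\Phi_h(v) := \sp{\mathbf{1}}{\psi(v)}_0 = h\sum_i \psi(v_i)$, whose piecewise-constant prolongation satisfies $\vphi(\pcx{v}) = \Phi_h(v)$, since $-\Delta_h\tilde\phi_1(v)$ is the $\sp{\cdot}{\cdot}_{-1}$-gradient of $\Phi_h$ at $v$. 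As the model is deterministic, no Skorohod-type construction is needed; the price is that compactness is available only in the non-reflexive weak* topology of $L^\infty([0,T];\Hd)$, which is precisely the setting the EVI formulation is built to tolerate.

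First I would establish the a priori estimates, which are considerably cleaner than in the stochastic case. Expanding $\norm{u_h^{n+1}-w}_{-1}^2$ for a fixed $w$ and using $\sp{u_h^n}{\Delta_h\tilde\phi_1(u_h^n)}_{-1} = -\sp{u_h^n}{\tilde\phi_1(u_h^n)}_0$ together with the subdifferential inequality $\tilde\phi_1(a)(a-b)\ge \psi(a)-\psi(b)$ (valid because $\psi'=\tilde\phi_1$ is nondecreasing), the cross term is bounded below by $\Phi_h(u_h^n)-\Phi_h(w)$. The decisive simplification relative to the Zhang nonlinearity is that $\abs{\tilde\phi_1}\le 1$, so $\norm{\tilde\phi_1(u_h^n)}_0^2 \le (Z-1)h \le 1$ \emph{without} invoking the energy; hence by \eqref{2eq:carefulest-1} the explicit-Euler remainder obeys $\tau^2\norm{\Delta_h\tilde\phi_1(u_h^n)}_{-1}^2 \le 4\tfrac{\tau^2}{h^2}$, and summed over $n<N$ this contributes at most $4T\tfrac{\tau}{h^2}\to 0$ under (CFL). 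Choosing $w=0$ yields $\max_n\norm{u_h^n}_{-1}^2\le C$ and $\tau\sum_n\Phi_h(u_h^n) = \int_0^T\vphi(\clc{u_h})\,\d t\le C$. By Banach--Alaoglu, $\lc{u_h}\tows u$ in $L^\infty([0,T];\Hd)$ along a subsequence. Since the fluxes $\tilde\phi_1(u_h)$ are bounded in $L^2([0,T];L^2)$, a further weak limit $Y$ exists there and $\partial_t u = \Delta Y$ holds in the limit; this purely linear identification, which does \emph{not} assert $Y\in\phi_1(u)$, gives $\partial_t u\in L^2([0,T];H^{-2})$ and hence enough time-regularity to evaluate $u(t)$ and compare it with the test curve $Z(t)$.

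The core of the proof is a discrete EVI. Fixing a test pair $(Z,G)$ as in Definition~\ref{2Def-VI-soln} and discretizing $Z$ in space and time to grid functions $w_h^n$ with increments $w_h^{n+1}-w_h^n\approx\tau G(t_n)$, I would repeat the expansion above with $w_h^n$ in place of $w$. Telescoping produces exactly the three ingredients of the continuous EVI: the quadratic gaps $\norm{u_h^n-w_h^n}_{-1}^2$ and $\norm{u_h^0-w_h^0}_{-1}^2$; the convexity term $2\tau\sum_n(\Phi_h(u_h^n)-\Phi_h(w_h^n))$; and the transport term $-2\sum_n\sp{u_h^n-w_h^n}{w_h^{n+1}-w_h^n}_{-1}$, which by Proposition~\ref{2conv-pc} converges to $-2\int_0^t\sp{G(r)}{u(r)-Z(r)}_\Hd\,\d r$. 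All remainders are either of order $\tfrac{\tau}{h^2}$ (the explicit-Euler error) or of order $\tau\int_0^T\norm{G}_\Hd^2$ (the time-interpolation error), and thus vanish. To pass to the limit at the level of almost every $t$, I would, as in Lemma~\ref{2limsup-diagonal}, integrate once more against the weight $(T-t)$ and use Fubini, so that every term becomes a space-time integral: the $\Hd$-norm gaps on the left are handled by weak* lower semicontinuity, the transport term by weak* convergence, the initial gap via $\pcx{u_h^0}\to u_0$, and the target energy $\int_0^T\vphi(\clc{w_h})\to\int_0^T\vphi(Z)$ by continuity, for which the $1$-Lipschitz continuity of $\psi$ together with $\pcx{w_h}\to Z$ in $L^2$ is exactly what is needed.

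The main obstacle is the remaining energy term $\int_0^T(T-t)\vphi(u)\,\d t$, which must be bounded \emph{above} by $\liminf_h\int_0^T(T-t)\vphi(\clc{u_h})\,\d t$; that is, one needs lower semicontinuity of $\vphi$ along $\clc{u_h}\tows u$. This is precisely where non-reflexivity bites: mass may concentrate so that the limit $u(t)$ is only a measure in $\cM\cap\Hd$, and $\vphi$ must be read as the convex functional of a measure from \cite{Neuss-SVI-SD} rather than a naive integral. I would obtain the required semicontinuity from the convexity of $\psi$ and its linear growth at infinity as developed there, observing that, because this term carries the favorable sign in the inequality, lower semicontinuity (and not full continuity) suffices. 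With every term passed to the limit, $u$ is an EVI solution of \eqref{2eq:150}; by the uniqueness established in \cite{Neuss-SVI-SD} it is the unique one, so the usual subsequence argument upgrades subsequential weak* convergence to convergence of the entire sequence, completing the proof.
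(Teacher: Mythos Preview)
Your proposal is correct and follows essentially the same route as the paper: derive a discrete EVI from the $\sp{\cdot}{\cdot}_{-1}$-gradient-flow structure of \eqref{eq:BTWdiscr} (the paper's Lemma~\ref{2lem:SVI-discr}), pass to the limit using weak* lower semicontinuity of the norm and of $\vphi$ from \cite{Neuss-SVI-SD} (the paper's Proposition~\ref{2prop-Sobolev-VI}), and conclude via uniqueness. The only notable differences are cosmetic---the paper keeps the test curve continuous in time and integrates against an arbitrary nonnegative $\gamma\in L^\infty([0,T])$ rather than the weight $(T-t)$, and it bypasses your $\partial_t u=\Delta Y$ regularity step by invoking an almost-everywhere Gronwall argument at the end to match the weak* limit with the continuous EVI solution.
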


\subsection{A priori estimates and transfer to extensions}

As in the previous section, we keep the convention of dropping the index
$m$ of the discretization sequences
\begin{displaymath}
(h_m)_{m\in\N}, (Z_m)_{m\in\N}, (\tau_m)_{m\in\N}, (N_m)_{m\in\N},
\end{displaymath}
writing instead $(h)_{h>0}$ etc. Moreover, convergence of
sequences and usually nonrelabeled subsequences indexed by $h_m$ for
$m\to\infty$ will be denoted by $h\to 0$. Finally, we will drop the index in
$\phi_1$, hence $\tilde\phi$ denotes the BTW nonlinearity given in \eqref{1eq:148-new} and $\phi$ its
maximal monotone extension.

In oder to obtain convergent subsequences by compactness arguments, we
use a very similar strategy as in Section
\ref{2sec:constr-weak-solution}. Hence, we will often refer to the
proofs of the corresponding lemmas.

\begin{lem}\label{2BTWdiscr-energy-est}
  Let $\tau, h> 0$ and $Z, N\in\N$ as in Assumption \ref{2ass-tau-h}, where we
  choose $h$ small enough for $\frac{\tau}{h^2} \leq \frac1{4}$
  to be satisfied. Let $(u_h)_{h\geq 0}$ be the discrete process defined in
  \eqref{eq:BTWdiscr}. Then,
  \begin{displaymath}
    \max_{n\in\{0,\dots,N+1\}} \norm{u_h^n}_{-1}^2 \leq \norm{u_h^*}_{-1}^2.
  \end{displaymath}
\end{lem}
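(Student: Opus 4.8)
The plan is to show that the discrete $\Hd$-norm $\norm{u_h^n}_{-1}^2$ is nonincreasing in $n$ and then to conclude by induction, since $u_h^0 = u_h^*$. This is the deterministic, forcing-free and noise-free analogue of Lemma \ref{2discr-energy-est}, so the computation simplifies drastically: there are no stochastic increments, no drift term, and no martingale to control, leaving only the purely diffusive increment $\tau\Delta_h\tilde\phi(u_h^n)$ to be estimated. First I would expand
\begin{displaymath}
  \norm{u_h^{n+1}}_{-1}^2 = \norm{u_h^n + \tau\Delta_h\tilde\phi(u_h^n)}_{-1}^2
  = \norm{u_h^n}_{-1}^2 + 2\tau\sp{u_h^n}{\Delta_h\tilde\phi(u_h^n)}_{-1}
  + \tau^2\norm{\Delta_h\tilde\phi(u_h^n)}_{-1}^2,
\end{displaymath}
exactly as in \eqref{2eq:108} but with all noise and $\mu$ terms removed.

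Next I would treat the two nontrivial terms separately. For the cross term I would use the discrete integration-by-parts identity from the proof of Lemma \ref{2discr-energy-est}, namely $\sp{u}{\Delta_h\tilde\phi(u)}_{-1} = -\sp{u}{\tilde\phi(u)}_0$, which follows from the symmetry of $(-\Delta_h)^{-1}$ with respect to $\sp{\cdot}{\cdot}_0$. The structural input specific to the BTW case replaces the equality \eqref{2eq:phitrick} of the Zhang case by an inequality in the right direction: since $\tilde\phi_1(x)^2 = \Ind{\abs{x}>1}(x)$ while $x\,\tilde\phi_1(x) = \abs{x}\,\Ind{\abs{x}>1}(x) \geq \Ind{\abs{x}>1}(x)$ pointwise, one obtains $\sp{u}{\tilde\phi_1(u)}_0 \geq \norm{\tilde\phi_1(u)}_0^2 \geq 0$. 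For the quadratic term I would invoke the spectral norm estimate \eqref{2eq:normest} (the same bound that yields \eqref{2eq:carefulest-1}), which holds for any grid function and therefore applies to $\tilde\phi_1(u_h^n)$, giving $\tau^2\norm{\Delta_h\tilde\phi(u_h^n)}_{-1}^2 \leq 4\tfrac{\tau}{h^2}\,\tau\norm{\tilde\phi(u_h^n)}_0^2$.

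Combining these, I expect to arrive at
\begin{displaymath}
  \norm{u_h^{n+1}}_{-1}^2 \leq \norm{u_h^n}_{-1}^2
  - \Bigl(2 - 4\tfrac{\tau}{h^2}\Bigr)\tau\,\norm{\tilde\phi(u_h^n)}_0^2,
\end{displaymath}
so under the assumed condition $\tfrac{\tau}{h^2} \leq \tfrac14$ the prefactor satisfies $2 - 4\tfrac{\tau}{h^2} \geq 1 \geq 0$ and the norm decreases monotonically; induction over $n$ then yields the claim. The only genuinely model-specific points—and the closest thing to an obstacle—are checking the correct sign in the BTW structural inequality $\sp{u}{\tilde\phi_1(u)}_0 \geq \norm{\tilde\phi_1(u)}_0^2$ and confirming that the CFL constant $\tfrac14$ suffices to absorb the quadratic term. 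Both are elementary precisely because the absence of forcing and noise means only a sign is needed, rather than a quantitative Grönwall-type argument as in the stochastic case.
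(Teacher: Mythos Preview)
Your proposal is correct and follows essentially the same approach as the paper: it refers back to the computation in Lemma \ref{2discr-energy-est} with the noise and drift terms removed, and replaces the Zhang identity \eqref{2eq:phitrick} by the BTW inequality $\sp{x}{\tilde\phi_1(x)}_0 \geq \norm{\tilde\phi_1(x)}_0^2$, which you correctly verify pointwise. The absorption of the quadratic term via \eqref{2eq:normest} under the condition $\tfrac{\tau}{h^2}\leq\tfrac14$ and the concluding induction are exactly what the paper intends.
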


The proof of Lemma \ref{2BTWdiscr-energy-est} is conducted by the same
arguments as the proof of Lemma \ref{2discr-energy-est}, using
\begin{displaymath}
  \sp{x}{\tilde\phi(x)}_0  \geq \norm{\tilde\phi(x)}_0^2\quad \text{for }x\in\R^{Z-1}
\end{displaymath}
instead of \eqref{2eq:phitrick}.

We have the following stronger version of Lemma
\ref{2discr-cnty-estimate} due to the boundedness of the BTW nonlinearity.
\begin{lem}\label{2BTWdiscr-cnty-estimate}
  Let $\tau, h> 0$ as in Assumption \ref{2ass-tau-h}.
  Then, the discrete process in (\ref{eq:BTWdiscr}) satisfies
  \begin{displaymath}
    \norm{u_h^{n+1} - u_h^n}_{-1}^2  \leq 4\frac{\tau^2}{h^2}\quad
    \text{for all }n\in\{0,\dots,N-1\}.
  \end{displaymath}
\end{lem}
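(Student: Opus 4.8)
The plan is to bound the one-step increment $u_h^{n+1} - u_h^n$ directly using the discrete evolution \eqref{eq:BTWdiscr}. Since there is no noise and no forcing term in the BTW model, the increment is purely the diffusive step, namely $u_h^{n+1} - u_h^n = \tau \Delta_h \tilde\phi(u_h^n)$. Thus I would begin by writing
\begin{displaymath}
  \norm{u_h^{n+1} - u_h^n}_{-1}^2 = \tau^2 \norm{\Delta_h \tilde\phi(u_h^n)}_{-1}^2,
\end{displaymath}
which reduces the claim to estimating $\norm{\Delta_h \tilde\phi(u_h^n)}_{-1}^2$. This is the deterministic analogue of the computation carried out in the proof of Lemma \ref{2discr-cnty-estimate}, but here the estimate is cleaner because the stochastic and drift contributions are absent.

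The key step is to control $\norm{\Delta_h \tilde\phi(u_h^n)}_{-1}$ by a factor involving $\frac{\tau}{h^2}$ and $\norm{\tilde\phi(u_h^n)}_0$. Here I would invoke the estimate \eqref{2eq:carefulest-1} (which rests on the norm relation \eqref{2eq:normest} comparing the discrete $H^{-1}$-norm of $\Delta_h$ applied to a vector with the discrete $L^2$-norm), giving
\begin{displaymath}
  \tau^2 \norm{\Delta_h \tilde\phi(u_h^n)}_{-1}^2 \leq 4 \frac{\tau^2}{h^2} \norm{\tilde\phi(u_h^n)}_0^2.
\end{displaymath}
The decisive advantage of the BTW nonlinearity, as signalled in the remark preceding the lemma, is its \emph{boundedness}: from the definition \eqref{1eq:148-new}, $\tilde\phi_1$ takes values only in $\{-1,0,1\}$, so $\abs{\tilde\phi(x)} \leq 1$ pointwise. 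Consequently, for the grid vector $\tilde\phi(u_h^n) \in \R^{Z-1}$ each component has modulus at most one, and the definition of the discrete $L^2$-norm $\norm{\cdot}_0$ (namely $\norm{v}_0^2 = h\sp{v}{v} = h\sum_i v_i^2$) yields
\begin{displaymath}
  \norm{\tilde\phi(u_h^n)}_0^2 = h \sum_{i=1}^{Z-1} \abs{\tilde\phi(u_{h,i}^n)}^2 \leq h(Z-1) \leq h Z = 1,
\end{displaymath}
using $h = \frac1Z$. Combining this uniform bound with the previous line gives exactly $\norm{u_h^{n+1} - u_h^n}_{-1}^2 \leq 4\frac{\tau^2}{h^2}$, as claimed.

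I do not expect a serious obstacle here; the statement is essentially a routine consequence of boundedness of $\tilde\phi_1$ together with the norm-comparison estimate already established for the Zhang case. The only point requiring mild care is verifying that the constant $4$ from \eqref{2eq:carefulest-1} is preserved and that the CFL-type smallness condition $\frac{\tau}{h^2}\leq\frac14$ from Lemma \ref{2BTWdiscr-energy-est} is compatible (indeed the bound $4\frac{\tau^2}{h^2}$ is stated without further absorption, so no smallness beyond Assumption \ref{2ass-tau-h} is strictly needed for this inequality itself). The contrast with Lemma \ref{2discr-cnty-estimate}, whose conclusion carried an expectation and an $h$-independent constant $C$ absorbing the energy estimate, is precisely that here the deterministic and uniformly bounded structure lets us replace $\E\norm{\tilde\phi(X_h^k)}_0^2$-type sums by the explicit pointwise bound, producing the sharper, fully explicit constant.
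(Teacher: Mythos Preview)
Your proof is correct and follows essentially the same route as the paper's: both write the increment as $\tau\Delta_h\tilde\phi(u_h^n)$, apply the norm comparison \eqref{2eq:normest} (equivalently, the spectral bound $\norm{-\Delta_h}\leq 4/h^2$ from Lemma~\ref{2lem-spectralnorm}), and then use $\abs{\tilde\phi_1}\leq 1$ to bound $\norm{\tilde\phi(u_h^n)}_0^2\leq 1$. The paper states the last step in a single phrase, whereas you spell out the sum explicitly, but the argument is the same.
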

\begin{proof}
  Using Lemma \ref{2lem-spectralnorm}, we compute for $n\in\{0,\dots,N-1\}$
  \begin{displaymath}
    \norm{u_h^{n+1} - u_h^n}_{-1}^2
    = \norm{\tau \Delta_h\tilde\phi(X_h^n) }_{-1}^2 \leq
    \tau^2 \norm{-\Delta_h}\E\norm{\tilde\phi(X_h^n)}_0^2 \leq 4\frac{\tau^2}{h^2},
  \end{displaymath}
  using the boundedness of $\tilde\phi$ in the last step.
\end{proof}

Again, the previous estimates can be transfered to the continuous setting
by Lemma \ref{2est-H-1}.

\begin{Cor}\label{2BTWcont-energy-Cty}
  Let $\tau, h> 0$ and $Z, N\in\N$ as in Assumption \ref{2ass-tau-h}, where we
  choose $h$ small enough for $\frac{\tau}{h^2} \leq \frac1{4}$
  to be satisfied. Let $(u_h)_{h\geq 0}$ be the discrete process defined in
  \eqref{eq:BTWdiscr}. Then, there exists a positive constant $C$ independent of $h$, such that
  \begin{equation}\label{2eq:control-by-init}
    \max\left\{\esssup_{t\in[0,T]}\norm{\lc{u_h}(t)}_\Hd^2,
      \esssup_{t\in[0,T]}\norm{\clc{u_h}(t)}_\Hd^2\right\} \leq
    \norm{u_h^*}_{-1} \leq C
  \end{equation}
  for $h>0$. Moreover,
  \begin{equation}\label{2eq:BTW-stability-est}
    \esssup_{t\in[0,T]}\norm{\lc{u_h}(t) - \clc{u_h}(t)}_\Hd^2 \leq C\frac{\tau^2}{h^2}
  \end{equation}
  for $h>0$.
\end{Cor}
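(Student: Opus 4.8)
The plan is to transfer the two discrete bounds of Lemma \ref{2BTWdiscr-energy-est} and Lemma \ref{2BTWdiscr-cnty-estimate}, both phrased in the discrete norm $\norm{\cdot}_{-1}$ of Definition \ref{2def-discr-norms}, to the spatially piecewise-constant prolongations via the norm comparison $\norm{\pcx{u}}_\Hd\leq\norm{u}_{-1}$ furnished by Lemma \ref{2est-H-1}, following verbatim the pattern of the proof of Corollary \ref{2cts-energy-est}. First I would record that the initial data are uniformly controlled: since $\pcx{(u_h^*)}\to u_0$ in $L^2$, Remark \ref{2Rem:isometry} gives $\norm{u_h^*}_0=\norm{\pcx{(u_h^*)}}_{L^2}$, so $(\norm{u_h^*}_0)_{h>0}$ is bounded, and as $\norm{\cdot}_{-1}\leq C\norm{\cdot}_0$ this yields the $h$-independent constant $C$ on the right of \eqref{2eq:control-by-init}.

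Next I would treat the two time-prolongations separately, recalling Definition \ref{2prolong-spacetime}. The left-sided piecewise-constant-in-time prolongation $\clc{u_h}$ equals $\pcx{(u_h^n)}$ on each interval $[n\tau,(n+1)\tau)$, so $\esssup_{t}\norm{\clc{u_h}(t)}_\Hd^2=\max_n\norm{\pcx{(u_h^n)}}_\Hd^2$; Lemma \ref{2est-H-1} bounds this by $\max_n\norm{u_h^n}_{-1}^2$, which Lemma \ref{2BTWdiscr-energy-est} controls by $\norm{u_h^*}_{-1}^2$. For the piecewise-linear-in-time prolongation, at each fixed $t$ the value $\lc{u_h}(t)$ is the convex combination $\lambda\,\pcx{(u_h^{n+1})}+(1-\lambda)\,\pcx{(u_h^n)}$ with $\lambda=(t-t_\tau)/\tau\in[0,1)$, so convexity of $\norm{\cdot}_\Hd^2$ reduces its bound to the one just obtained for the nodal values. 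Together these give both entries of the maximum in \eqref{2eq:control-by-init}.

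For the stability estimate \eqref{2eq:BTW-stability-est}, I would note that on $[n\tau,(n+1)\tau)$ the difference is the spatial prolongation of a scalar multiple of a single increment,
\[
  \lc{u_h}(t)-\clc{u_h}(t)=\tfrac{t-t_\tau}{\tau}\,\pcx{(u_h^{n+1}-u_h^n)},
\]
with prefactor in $[0,1)$. Hence $\norm{\lc{u_h}(t)-\clc{u_h}(t)}_\Hd^2\leq\norm{\pcx{(u_h^{n+1}-u_h^n)}}_\Hd^2$, and Lemma \ref{2est-H-1} together with Lemma \ref{2BTWdiscr-cnty-estimate} bounds the latter by $\norm{u_h^{n+1}-u_h^n}_{-1}^2\leq 4\tau^2/h^2\leq C\tau^2/h^2$, uniformly in $n$, which is \eqref{2eq:BTW-stability-est}.

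No step is computationally heavy; the only genuinely delicate point has already been discharged, namely the discrete-to-continuous comparison of the $H^{-1}$-type norms encapsulated in Lemma \ref{2est-H-1}. Unlike the exact $\norm{\cdot}_0$--$L^2$ isometry of Remark \ref{2Rem:isometry}, the discrete norm $\norm{\cdot}_{-1}$ built from $(-\Delta_h)^{-1}$ and the continuous $\Hd$ norm of $\pcx{(\cdot)}$ built from $(-\Delta)^{-1}$ do not coincide, so the error made in replacing the discrete inverse Laplacian by the continuous one must be absorbed; this is precisely the content of Lemma \ref{2est-H-1}, and once it is invoked the present corollary is pure bookkeeping, exactly as for Corollary \ref{2cts-energy-est}.
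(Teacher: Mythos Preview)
Your proposal is correct and follows exactly the paper's route: the paper states only that ``the previous estimates can be transferred to the continuous setting by Lemma \ref{2est-H-1}'', and your argument is precisely the detailed unpacking of this transfer, combining Lemma \ref{2BTWdiscr-energy-est} and Lemma \ref{2BTWdiscr-cnty-estimate} with the norm comparison of Lemma \ref{2est-H-1} and elementary convexity for the piecewise-linear case.
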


\subsection{Extraction of convergent subsequences}

\begin{lem}\label{2BTW-extract-subseq}
  Let $\tau, h> 0$ and $Z, N\in\N$ as in Assumption \ref{2ass-tau-h}, and
  let $(u_h)_{h\geq 0}$ be the discrete process defined in
  \eqref{eq:BTWdiscr}. Then, there exists $u\in L^\infty([0,T]; \Hd)$ and a nonrelabeled subsequence such
  that
  \begin{displaymath}
    \lc{u_h} \tows u\quad \text{and}\quad \clc{u_h}\tows u
  \end{displaymath}
  for $h\to 0$.
\end{lem}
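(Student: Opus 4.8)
The plan is to realize $u$ as a weak* accumulation point of $(\lc{u_h})_{h>0}$ in $L^\infty([0,T];\Hd)$ and then to transfer this convergence to $(\clc{u_h})_{h>0}$ via the stability estimate \eqref{2eq:BTW-stability-est}. Since the setting here is deterministic, no Skorohod-type representation is needed, and the argument reduces to a direct application of sequential weak* compactness, mirroring the use of the Banach-Alaoglu theorem for tightness in Section \ref{2sec:constr-weak-solution}.

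First I would invoke the uniform bound \eqref{2eq:control-by-init} from Corollary \ref{2BTWcont-energy-Cty}, namely $\esssup_{t\in[0,T]}\norm{\lc{u_h}(t)}_\Hd^2 \leq C$ with $C$ independent of $h$, to conclude that $(\lc{u_h})_{h>0}$ is bounded in $L^\infty([0,T];\Hd)$. Because $\Hd = (\Hsob)'$ is a reflexive Hilbert space, it enjoys the Radon-Nikodym property, so that $L^\infty([0,T];\Hd)$ can be identified with the dual of $L^1([0,T];\Hsob)$; moreover, $\Hsob$ being separable makes this predual separable. Hence the sequential version of the Banach-Alaoglu theorem applies, and I would extract a nonrelabeled subsequence together with a limit $u\in L^\infty([0,T];\Hd)$ such that $\lc{u_h}\tows u$ for $h\to 0$.

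It then remains to show $\clc{u_h}\tows u$ along the same subsequence. For this I would use the second part of Corollary \ref{2BTWcont-energy-Cty}: the stability estimate \eqref{2eq:BTW-stability-est} gives
\begin{displaymath}
  \norm{\lc{u_h}-\clc{u_h}}_{L^\infty([0,T];\Hd)}^2 = \esssup_{t\in[0,T]}\norm{\lc{u_h}(t)-\clc{u_h}(t)}_\Hd^2 \leq C\frac{\tau^2}{h^2}.
\end{displaymath}
Under Assumption \ref{2ass-tau-h}, the CFL condition $\frac{\tau}{h^2}\to 0$ forces $\tau\to 0$, so that $\frac{\tau^2}{h^2}=\tau\cdot\frac{\tau}{h^2}\to 0$. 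Thus $\lc{u_h}-\clc{u_h}\to 0$ strongly in $L^\infty([0,T];\Hd)$, and in particular weak* to zero. Combining this with $\lc{u_h}\tows u$ yields $\clc{u_h}\tows u$ and completes the argument.

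The computation is essentially routine; the only point requiring genuine care is the predual identification, which is what guarantees \emph{sequential} weak* compactness (rather than mere compactness of nets) and hence allows the extraction of a convergent subsequence. This relies on the separability of $L^1([0,T];\Hsob)$ together with the Radon-Nikodym property of $\Hd$, both of which hold since $\Hsob$ is a separable Hilbert space.
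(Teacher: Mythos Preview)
Your proof is correct and follows essentially the same approach as the paper: both invoke sequential weak* compactness in $L^\infty([0,T];\Hd)$ via the separable predual $L^1([0,T];\Hsob)$ and then use the stability estimate \eqref{2eq:BTW-stability-est} to handle $\clc{u_h}$. The only difference is cosmetic: the paper extracts a further subsequence along which $\clc{u_h}\tows \tilde u$ and then identifies $\tilde u=u$ via \eqref{2eq:BTW-stability-est}, whereas you observe directly that $\lc{u_h}-\clc{u_h}\to 0$ strongly in $L^\infty([0,T];\Hd)$, which yields $\clc{u_h}\tows u$ along the \emph{same} subsequence without a second extraction.
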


\begin{proof}
  The existence of $u\in\Hd$ and a nonrelabeled subsequence such that
  $\lc{u_h}\tows u$ for $h\to 0$ follows by the Banach-Alaoglu theorem and
  the fact that convergence with respect to the weak* topology on the dual
  of a normed space is equivalent to weak* convergence (\cf
  \cite[Proposition A.51]{Fonseca}). From this subsequence, the same
  argument allows to extract another subsequence such that
  $\clc{u_h}\tows \tilde u$ for some $\tilde u\in L^\infty([0,T]; \Hd)$.
  Using \eqref{2eq:BTW-stability-est}, one shows that 
  $u=\tilde u$, which
  finishes the proof.
\end{proof}

\subsection{Identification of the limit as a solution and proof of Theorem
  \ref{2BTW-thm}}

\begin{Def}
Let $h>0$ and $Z\in\N$ as in Assumption \ref{2ass-tau-h}. We then define the
functional $\vphi_h: \R^{\gridlim-1}\to [0,\infty)$ by
\begin{displaymath}
  \vphi_h(w_h) = \sum_{i=1}^{\gridlim-1} h\,\psi(w_{h,i}),
\end{displaymath}
where $\partial\psi = \phi$ as defined in \eqref{2eq:Def-psi}.
\end{Def}

\begin{Rem}\label{2:compatibility-vphih-vphi}
  We note that $\vphi_h(w_h) = \vphi(\pcx{w_h})$, where $\vphi$ is defined
  as in \eqref{2eq:Def-vphi}. Furthermore, using that $\tilde\phi \in
  \partial\psi$, one can verify that
  \begin{equation}\label{2eq:discr-subdiff}
    -\Delta_h\tilde\phi(w_h)\in \partial_{-1}\vphi_h(w_h),
  \end{equation}
  where $\partial_{-1}$ denotes the subdifferential with respect to the
  inner product $\sp{\cdot}{\cdot}_{-1}$.
\end{Rem}

\begin{lem}\label{2lem:SVI-discr}
  Let $h>0$ and $Z\in\N$ as in Assumption \ref{2ass-tau-h}. Let
  $v_h\in \cC([0,T]; \R^{Z-1})$ be almost everywhere differentiable,
  $\partial_t v_h \in L^2([0,T]; \R^{Z-1})$ and $u_h$ be defined as in
  \eqref{eq:BTWdiscr}. For all $t\in[0,T]$, we then have
\begin{align}\label{2eq:8}
  \begin{split}
    \norm{v_h(t) - \plt{u_h}(t)}_{-1}^2 \leq& \norm{v_h(0) -
      u_h^*}_{-1}^2 + 2\int_0^t \vphi_h(v_h(r))\d r -
    2\int_0^t\vphi_h(\pctl{u_h}(r))\d r\\
    &+ 2\int_0^t\sp{v_h(r) - \plt{u_h}(r)}{\partial_t v_h(r)}_{-1}\d r\\
    &+ 2\int_0^t\sp{\pctl{u_h}(r) -
      \plt{u_h}(r)}{-\Delta_h\tilde\phi(\pctl{u_h}(r))}_{-1}\d r.
  \end{split}
\end{align}
\end{lem}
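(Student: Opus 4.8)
The plan is to mimic the classical proof of an evolution variational inequality at the fully discrete level, exploiting that the piecewise-linear-in-time prolongation $\plt{u_h}$ has a time derivative governed precisely by the discrete flux. First I would observe that on each time slab $[n\tau,(n+1)\tau)$ the scheme \eqref{eq:BTWdiscr} gives
\begin{displaymath}
  \partial_t\plt{u_h}(t) = \frac{u_h^{n+1}-u_h^n}{\tau} = \Delta_h\tilde\phi(u_h^n) = -\big(-\Delta_h\tilde\phi(\pctl{u_h}(t))\big),
\end{displaymath}
since $\pctl{u_h}(t)=u_h^n$ there. Writing $w(t):=v_h(t)-\plt{u_h}(t)$, this is a continuous, piecewise-$\cC^1$ curve in $(\R^{Z-1},\sp{\cdot}{\cdot}_{-1})$, so $t\mapsto\norm{w(t)}_{-1}^2$ is absolutely continuous and the fundamental theorem of calculus applies; here one uses that $v_h$ is absolutely continuous with $\partial_t v_h\in L^2$.

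The next step is the chain rule, which together with the displayed identity for $\partial_t\plt{u_h}$ yields
\begin{displaymath}
  \frac{\d}{\d t}\norm{w(t)}_{-1}^2 = 2\sp{w(t)}{\partial_t w(t)}_{-1} = 2\sp{w(t)}{\partial_t v_h(t)}_{-1} + 2\sp{w(t)}{-\Delta_h\tilde\phi(\pctl{u_h}(t))}_{-1}.
\end{displaymath}
Integrating from $0$ to $t$ and noting $w(0)=v_h(0)-u_h^*$ already produces the term $\norm{v_h(0)-u_h^*}_{-1}^2$ and the penultimate integral of the claim verbatim.

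It then remains to treat $2\int_0^t\sp{w(r)}{-\Delta_h\tilde\phi(\pctl{u_h}(r))}_{-1}\,\d r$. I would split $w = (v_h - \pctl{u_h}) + (\pctl{u_h} - \plt{u_h})$. The contribution of the second piece reproduces exactly the final integral in \eqref{2eq:8} and is kept as an error term. For the first piece, the heart of the argument is the discrete subdifferential relation \eqref{2eq:discr-subdiff} from Remark \ref{2:compatibility-vphih-vphi}, namely $-\Delta_h\tilde\phi(\pctl{u_h}(r))\in\partial_{-1}\vphi_h(\pctl{u_h}(r))$. Convexity of $\vphi_h$ then gives, pointwise in $r$,
\begin{displaymath}
  \sp{-\Delta_h\tilde\phi(\pctl{u_h}(r))}{v_h(r)-\pctl{u_h}(r)}_{-1} \leq \vphi_h(v_h(r)) - \vphi_h(\pctl{u_h}(r)),
\end{displaymath}
whose time integral yields precisely $2\int_0^t\vphi_h(v_h(r))\,\d r - 2\int_0^t\vphi_h(\pctl{u_h}(r))\,\d r$. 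Assembling these contributions gives \eqref{2eq:8}.

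I expect no genuine obstacle beyond careful bookkeeping: the only substantive analytic input is the discrete subdifferential inequality, which is already available from Remark \ref{2:compatibility-vphih-vphi}, so the convexity step requires no further work. The two points deserving attention are the regularity justification for applying the fundamental theorem of calculus to $\norm{w}_{-1}^2$, and the correct identification of $\partial_t\plt{u_h}$ with the piecewise-constant flux $-\Delta_h\tilde\phi(\pctl{u_h})$ on each slab, which is what couples the linear and the constant prolongations and forces the appearance of the mismatch term $2\int_0^t\sp{\pctl{u_h}(r)-\plt{u_h}(r)}{-\Delta_h\tilde\phi(\pctl{u_h}(r))}_{-1}\,\d r$ in the final estimate.
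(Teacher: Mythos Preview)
Your proposal is correct and takes essentially the same approach as the paper, whose proof is the one-liner ``This follows by the construction of $u_h$, the chain rule and \eqref{2eq:discr-subdiff}.'' You have faithfully unpacked precisely these three ingredients: the identification $\partial_t\plt{u_h}=\Delta_h\tilde\phi(\pctl{u_h})$ from the scheme, the chain rule for $\norm{v_h-\plt{u_h}}_{-1}^2$, and the subdifferential inequality from Remark~\ref{2:compatibility-vphih-vphi} applied after the split $w=(v_h-\pctl{u_h})+(\pctl{u_h}-\plt{u_h})$.
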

\begin{proof}
  This follows by the construction of $u_h$, the chain rule and \eqref{2eq:discr-subdiff}.
\end{proof}

\begin{prop}\label{2prop-Sobolev-VI}
  Let
  \begin{displaymath}
    v\in W^{1,2}(0,T; L^2, \Hd) := \left\{v\in L^2([0,T]; L^2)| \partial_t
      v \in L^2([0,T]; \Hd)\right\},
  \end{displaymath}
  and $u\in L^\infty([0,T]; \Hd)$ be the limit
  process of $(u_h)_{h>0}$ as in Lemma \ref{2BTW-extract-subseq}. Then
  \begin{align}\label{2eq:VI-for-limit}
  \begin{split}
    \norm{v(t) - u(t)}_\Hd^2 + 2\int_0^t\vphi(u(r))\d r
    \leq& \norm{v(0) -
      u(0)}_\Hd^2 + 2\int_0^t \vphi(v(r))\d r \\
    &+ 2\int_0^t\sp{v(r) - u(r)}{\partial_t v(r)}_\Hd\d r
  \end{split}
  \end{align}
  for almost all $t\in[0,T]$.
\end{prop}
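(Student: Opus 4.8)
The plan is to pass to the limit $h \to 0$ in the discrete variational inequality \eqref{2eq:8} of Lemma \ref{2lem:SVI-discr}, after choosing an appropriate discrete approximation $v_h$ of the given test function $v$. First I would fix $v\in W^{1,2}(0,T; L^2, \Hd)$ and construct a sequence $v_h\in\cC([0,T];\R^{Z-1})$, say $v_h(t) = (\pc\Pi)^{-1}\pc\Pi v(t)$ or an analogous $L^2$-projection followed by the inverse of $\Ipcx$, arranged so that $\pcx{v_h} \to v$ in $L^2([0,T];L^2)$ and $\partial_t v_h \to \partial_t v$ suitably. The compatibility identity $\vphi_h(w_h) = \vphi(\pcx{w_h})$ from Remark \ref{2:compatibility-vphih-vphi} then lets me rewrite each $\vphi_h$-term in \eqref{2eq:8} as a continuous functional $\vphi$ evaluated at a piecewise constant interpolation, so that the discrete inequality is expressed entirely in terms of spatially continuous objects.

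Next I would take $h\to 0$ term by term, using Lemma \ref{2BTW-extract-subseq}, which provides $\lc{u_h}\tows u$ and $\clc{u_h}\tows u$ in $L^\infty([0,T];\Hd)$. The left-hand norm term $\norm{v_h(t)-\plt{u_h}(t)}_{-1}^2$ passes to $\norm{v(t)-u(t)}_\Hd^2$ (up to lower-semicontinuity, giving the correct $\liminf$ direction) via Lemma \ref{2est-H-1} and the isometry of Remark \ref{2Rem:isometry}; the initial term is handled by the convergence $\pcx{(u_h^*)}\to u_0$. The linear pairing $\int_0^t\sp{v_h - \plt{u_h}}{\partial_t v_h}_{-1}$ converges to its continuous analogue by the strong convergence of $v_h$ and $\partial_t v_h$ against the weak* limit of $u_h$. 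The two $\vphi$-terms require care: for the term $2\int_0^t\vphi(\pcx{v_h})\,\d r$ I would arrange $\vphi(\pcx{v_h(r)})\to\vphi(v(r))$ by continuity/convergence of the total-variation functional along the chosen approximation, while for $2\int_0^t\vphi(\clc{u_h}(r))\,\d r$ I would only need the lower bound $\liminf_{h\to 0}\int_0^t\vphi(\clc{u_h})\,\d r \geq \int_0^t\vphi(u)\,\d r$, which follows from weak* lower semicontinuity of $\vphi$ on $\Hd$ combined with $\clc{u_h}\tows u$.

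Finally, the last correction term $2\int_0^t\sp{\pctl{u_h} - \plt{u_h}}{-\Delta_h\tilde\phi(\pctl{u_h})}_{-1}\,\d r$ must be shown to vanish in the limit. Here the boundedness of the BTW nonlinearity is decisive: $\norm{-\Delta_h\tilde\phi(\pctl{u_h})}_{-1}$ is controlled by $\frac{1}{h}\norm{\tilde\phi(\pctl{u_h})}_0$ via \eqref{2eq:normest}, while the stability estimate \eqref{2eq:BTW-stability-est} gives $\norm{\pctl{u_h}-\plt{u_h}}_{-1}^2 \leq C\tau^2/h^2$; multiplying the two and using the CFL condition $\tau/h^2\to 0$ forces this term to $0$. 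I expect this last estimate to be the main obstacle, since it is exactly where the non-reflexive, merely bounded structure of the BTW problem interacts with the grid scaling, and it is the reason the whole argument is run in the $\Hd$-based (rather than $L^2$) framework. Collecting the four convergences, the limit inequality \eqref{2eq:VI-for-limit} follows for almost every $t$, which is precisely the assertion.
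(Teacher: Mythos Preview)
Your overall strategy --- approximate $v$ by grid functions, pass to the limit in the discrete inequality \eqref{2eq:8}, and show the error term vanishes via the CFL condition --- is exactly the paper's. Your treatment of the correction term is correct and matches \eqref{2eq:17}. However, there are two genuine gaps.

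\textbf{Pointwise-in-$t$ limits are not available.} You propose to pass to the limit in \eqref{2eq:8} for a fixed $t$, invoking weak lower-semicontinuity for $\norm{v_h(t)-\plt{u_h}(t)}_{-1}^2$ and for $\int_0^t\vphi(\clc{u_h})\,\d r$. But Lemma \ref{2BTW-extract-subseq} only gives $\lc{u_h}\tows u$ and $\clc{u_h}\tows u$ in $L^\infty([0,T];\Hd)$; this does \emph{not} imply $\plt{u_h}(t)\tow u(t)$ in $\Hd$ at any individual $t$, so neither lower-semicontinuity argument applies as stated. The paper fixes this by first integrating both sides of \eqref{2eq:8} against an arbitrary nonnegative $\gamma\in L^\infty([0,T])$, obtaining \eqref{2eq:integrated-VI-discr}, and only then letting $h\to 0$. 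All limits are then taken in time-integrated form (with Fubini converting double integrals into weighted single integrals as in \eqref{2eq:Fub}), and since $\gamma\geq 0$ is arbitrary one recovers the a.e.-$t$ inequality \eqref{2eq:VI-for-limit} at the end.

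\textbf{The direct approximation of general $v\in W^{1,2}$ is problematic for the pairing term.} The paper works in two steps: first $v\in\cC^1([0,T];L^2)$, then extend by density of $\cC^1([0,T];L^2)$ in $W^{1,2}(0,T;L^2,\Hd)$. The reason is that the tool for the mixed term, Proposition \ref{2conv-pc}, requires the strongly converging factor $\pcx{(\partial_t v_h)}$ to converge in $L^2([0,T];L^2)$, whereas for general $v$ one only has $\partial_t v\in L^2([0,T];\Hd)$. Your sketch (``strong convergence of $\partial_t v_h$ against the weak* limit'') does not say in which space, and if it is only $\Hd$-strong, the passage via Proposition \ref{2conv-pc} is not justified. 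Either adopt the paper's two-step reduction, or supply a separate argument showing how the discrete $\sp{\cdot}{\cdot}_{-1}$ pairing converges when one factor converges only in $\Hd$.
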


\begin{proof}
  \textit{Step 1:} We first show the statement for $v\in
  \cC^1([0,T];L^2)$. Let Assumption \ref{2ass-tau-h} be satisfied. To show
  \eqref{2eq:VI-for-limit}, we aim to pass to the limit in (\ref{2eq:8})
  for a subsequence $h\to 0$ realizing the convergence in Lemma
  \ref{2BTW-extract-subseq}, using a sequence
  $(v_h)_{h>0}\subset \cC([0,T]; \R^{Z-1})$ such that for all $h>0$, $v_h$
  is differentialble in time almost everywhere and
  \begin{equation}\label{2eq:smooth-approx}
    \pcx{v_h}\to v\quad\text{and}\quad \pcx{(\partial_t v_h)}\to \partial_t
    v\quad \text{in }L^2([0,T]; L^2).
  \end{equation}
  Such a sequence can be constructed by using the density of
  $\cC_c^0([0,T]\times[0,1])$ in $L^2([0,T]; L^2)$ and the fact that each
  compact set in $[0,1]$ is included in $\supp(\pcx{\cS_h})$ for $h$ small
  enough. Note that Lemma \ref{2lem:SVI-discr} applies to $v_h$, since
  \eqref{2eq:smooth-approx} implies that $\pcx{(\partial_t v_h)}$ is
  bounded in $L^2([0,T]; L^2)$ and hence
  \begin{displaymath}
    \int_0^T \norm{\partial_t v_h}_0^2\d t = \int_0^T\norm{\pcx{(\partial_t
        v_h)}}_{L^2}^2\d t < \infty
  \end{displaymath}
  by the isometry in Remark \ref{2Rem:isometry}. Then, integrating
  \eqref{2eq:8} against $\gamma\in L^\infty([0,T])$ yields
  \begin{align}\label{2eq:integrated-VI-discr}
    \begin{split}
      &\int_0^T\gamma(t)\norm{v_h(t) - \plt{u_h}(t)}_{-1}^2\d t +
      2\int_0^T\gamma(t)\int_0^t \vphi_h(\pctl{u_h}(r))\,\d r\,\d t\\
      &\leq
      \int_0^T\gamma(t)\norm{v_h(0) - u_h^*}_{-1}^2\d t +
      2\int_0^T\gamma(t)\int_0^t\vphi_h(v_h(r))\,\d r\,\d t\\
      &\quad + 2\int_0^T\gamma(t)\int_0^t\sp{v_h(r) -
        \plt{u_h}(r)}{\partial_t v_h(r)}_{-1}\d r\,\d t\\
      &\quad + 2\int_0^T\gamma(t) \int_0^t\sp{\pctl{u_h}(r) -
        \plt{u_h}(r)}{-\Delta_h\phi(\pctl{u_h}(r))}_{-1}\d r\,\d t.
    \end{split}
  \end{align}
  We treat each term in \eqref{2eq:integrated-VI-discr} separately. For the
  first term, we use the lower-semicontinuity of the norm, the convergence
  from Lemma \ref{2BTW-extract-subseq} and the construction of $v_h$. For
  the second term, we use the lower-semicontinuity of $\vphi$, as proven in
  \cite[Proposition 3.1]{Neuss-SVI-SD}, in a weighted space arising from
  Fubini's theorem similar to \eqref{2eq:Fub}. The third term can be
  treated as in \eqref{2eq:init-val-ident}. For the fourth term, we exploit
  that $v$ and $v_h$ are $L^1$ functions, which allows to use the formula
  \begin{displaymath}
    \vphi(v) = \int_0^1 \psi(v(x))\,\d x.
  \end{displaymath}
  The Lipschitz continuity of $\psi$ then allows to pass to the limit. The
  fifth term can be treated by the dominated convergence theorem in the
  outer integral and Proposition \ref{2conv-pc} for the inner integral.
  
  In order to treat the last term, we use Estimate
  \eqref{2eq:normest}, Lemma \ref{2BTWdiscr-cnty-estimate} and the
  boundedness of $\tilde\phi$ to obtain
  \begin{align}\label{2eq:17}
    \begin{split}
      &\abs{\int_0^T\gamma(t)\int_0^t\sp{\pctl{u_h}(r) -
          \plt{u_h}(r)}{-\Delta_h\tilde\phi(\pctl{u_h}(r))}_{-1}\d r\,\d t}\\
      &\leq \int_0^T\gamma(t)\,\d t \sup_{t\in[0,T]}\norm{\plt{u_h}(t) -
        \pctl{u_h}(t)}_{-1} \int_0^T
      \norm{-\Delta_h\tilde\phi(\pctl{u_h}(r))}_{-1}
      \d r\\
      &\leq \int_0^T \gamma(t)\,\d t\, 2\frac{\tau}{h}
      \int_0^T\frac{2}{h}\norm{\tilde\phi(\pctl{u_h}(r))}_0 \d r \leq
      \int_0^T\gamma(t)\,\d t\,4T \frac{\tau}{h^2} \to 0
    \end{split}
  \end{align}
  for $h\to 0$. Hence, taking $\liminf_{h\to 0}$ in
  \eqref{2eq:integrated-VI-discr}, we obtain
  \begin{align}\label{2eq:integrated-VI-cts}
    \begin{split}
      &\int_0^T\gamma(t)\norm{v(t) - u(t)}_\Hd\d t +
      2\int_0^T\gamma(t)\int_0^t \vphi(u(r))\,\d r\,\d t\\
      &\leq
      \int_0^T\gamma(t)\norm{v(0) - u(0)}_\Hd\d t +
      2\int_0^T\gamma(t)\int_0^t\vphi(v(r))\,\d r\,\d t\\
      &\quad + 2\int_0^T\gamma(t)\int_0^t\sp{v(r) -
        u(r)}{\partial_t v(r)}_\Hd\d r\,\d t,
    \end{split}
  \end{align}
  and since $\gamma\in L^\infty([0,T])$, $\gamma\geq 0$, was chosen
  arbitrarily, \eqref{2eq:VI-for-limit} follows for $v\in\cC^1([0,T]; L^2)$.
  
  \textit{Step 2:} In order to extend the statement to Sobolev functions
  $v$, recall that $\cC^1([0,T]; L^2)$ is dense in $W^{1,2}(0,T;L^2,\Hd)$
  with respect to the norm
  \begin{displaymath}
    \norm{u}_{W^{1,2}(0,T;L^2,\Hd)}^2 = \norm{u}_{L^2([0,T];L^2)}^2 +
    \norm{\partial_t u}_{L^2([0,T];\Hd)}^2
  \end{displaymath}
  according to \cite[Theorem 2.1]{Lions-Magenes}, and that the embedding
  \begin{displaymath}
    W^{1,2}(0,T; L^2,\Hd)\hookrightarrow \cC([0,T];\Hd)
  \end{displaymath}
  is continuous by \cite[Theorem 3.1]{Lions-Magenes}. Thus, we obtain the
  full statement by an approximaion agument.
\end{proof}

\begin{proof}[Proof of Theorem \ref{2BTW-thm}]
  For each sequence $(h_m)_{m\in\N}$ satisfying Assumption
  \ref{2ass-tau-h}, Lemma \ref{2BTW-extract-subseq} provides a subsequence
  denoted by $h\to 0$ and $u\in L^\infty([0,T];\Hd)$, such that
  $u_h\tows u$ in $L^\infty([0,T];\Hd)$ for $h\to 0$. Proposition
  \ref{2prop-Sobolev-VI} then implies that $u$ satisfies the variational
  inequality in Definition \ref{2Def-VI-soln}. Revisiting the uniqueness
  argument in \cite{Neuss-SVI-SD}, we see that the continuity of the
  solution is not needed by using an almost-everywhere version of
  Gronwall's inequality (see \eg \cite[Theorem 1.1]{Webb}),
  such that $u$ can be identified as a $\d t$ version of the EVI solution to
  \eqref{2eq:150}.
  By a standard contradiction argument, we obtain that the whole sequence
  $(u_{h_m})_{m\in\N}$ converges to this solution, which finishes the proof.
\end{proof}

\section{Numerical experiments}\label{sec_num} 

In this section we perform numerical simulations to illustrate the features of the two discrete (stochastic) models \eqref{1eq:2-new} in one and two spatial dimensions, as well as the convergence to the limiting SPDEs. We strive here for simulations going beyond the setting of the results proven in the previous sections, by (a) estimating a rate of convergence, (b) investigating the convergence under more general assumptions, for example by relaxing the strong CFL condition, and (c) by considering higher spatial dimension. In addition, we verify the validity of the fundamental power law scaling of avalanche sizes (see, e.\,g.\,\cite{BTW88}) for the weakly driven BTW/Zhang model \eqref{1eq:2-new} considered in this work.

More precisely, recall that in the proof of the convergence of the discrete
Zhang dynamics to the solutions of the corresponding SPDE, the strong CFL condition \eqref{1eq:CFL} was assumed in the sense that $\tau = o(h^2)$.
In this section, we relax this assumption by choosing $\tau = h^2$ in all simulations below, and still
empirically observe convergence (with rates). 

For a given mesh
size $h>0$ (note $\tau = h^2$) throughout this section, we let $X_{h}^n\equiv X_{h}^{n, \cdot}$ be the
discrete solution at time $n$ (cf. \eqref{1eq:150-new}, \eqref{2eq:88}), and in order to simplify the presentation,
with a slight abuse of notation, we interpret the numerical solution as a
grid function writing $X_{h}^n(z_j) = X_{h}^{n,j}$ for $z_j =
jh$, whenever the meaning becomes clear from the context.

The simulations below are performed for a slightly more general discrete
model than (\ref{2eq:88}), by introducing the free parameters $D$ and
$\sigma$ in front of the diffusion $\Delta_h$ and the noise in
\eqref{2eq:88} respectively. More precisely, for $h=\frac1{M}, \tau = h^2$, we set $N=T/\tau$, $X_{h}^0 = x_{h}^0$ and compute 
\begin{equation}\label{scheme1}
  X_{h}^{n} = X_{h}^{n-1} + \tau D \Delta_{h}\tilde\phi\left(X_{h}^{n-1}\right) + \tau \mu  + \sigma \sqrt{\frac{\tau}{h}}\xi_{h}^{n},\quad\text{for }n=1,\dots, N,
\end{equation}
with zero boundary conditions $X_{h}^{n,0}= X_{h}^{n,M}=0$, for all  $n > 0$.
In what follows, we consider both the case $\tilde\phi = \tilde\phi_{1}$
(BTW model) and the case $\tilde\phi =
\tilde\phi_{2}$ (Zhang model).

The random variables $\xi_{h}^{n} = (\xi^{n,j}_h)_{j=1}^M$ in \eqref{scheme1} are chosen to 
be either i.\,i.\,d.\, $\cN(0,1)$-distributed or i.\,i.\,d.\,Bernoulli distributed with values $\{-1,1\}$
attained with equal probability $\frac{1}{2}$.
Since, in most simulations below, the choice of the noise had only minor effect on the results,
we only mention the concrete choice of the noise where relevant.

The discrete model \eqref{scheme1} can be regarded as an approximation of the SPDE 
\begin{align}\label{eq_zhang2}
\d X(t) & = D \Delta \tilde\phi\big(X(t)\big) + \mu\d t + \sigma \d W(t),
\\\nonumber
X(0) & = x_0.
\end{align}

\subsection{Rate of convergence} 

In this section, we present numerical simulations for the rate of convergence
of the discrete approximation (\ref{scheme1}) to the stochastic Zhang and the stochastic BTW PDE with space-time white noise, that is, (\ref{eq_zhang2}) in spatial dimension $d=1$. Since we use $\tau = h^2$, 
the experimental results go beyond the strong CFL condition \eqref{1eq:CFL}. 

For the following numerical simulations, we consider the initial data
\begin{equation*}
x_0(z) = 
\left\{
\begin{split}
& 1 \qquad && \text{ for } z\in[0.2,0.4]\cup[0.6,0.8]\,,
\\
& 0 \qquad && \text{ otherwise},
\end{split}
\right.
\end{equation*}
and choose the remaining parameters in \eqref{scheme1} as $K=1$, $D=0.01$,
$\mu=0.1$; the choice of $\sigma$ will be specified below.

We measure the pathwise convergence of the error in the (discrete)
$H^{-1}$ and $L^{2}$ norms
as defined below, evaluated at the final time
$T=0.1$. Since no explicit solution is known in the
stochastic setting, we examine the error with respect to a reference solution which is
computed on a fine mesh with mesh size $\hmin=1/{\tilde M}$ for
$\tilde{M}= 12800$ (and $\taumin = \hmin^2$). To construct realizations of the noise consistently for all discretization levels we consider
realizations of the random variable $\xi_{\hmin} = \{\xi^{n,j}_{\hmin}\}_{j,n=1}^{\tilde M-1, \tilde M^2}$ on the fine space-time grid.
The noise on the coarser levels is constructed as follows.
The random variables $\xi_{\hmin}$ on the fine grid  can be interpreted as (unscaled) Wiener increments
of a discrete (piecewise constant in time and space over the fine space-time partition with steps-sizes $\hmin$, $\taumin$) 
space-time Brownian sheet   
$ \int_0^{\tilde{t}_n} \int_0^{\tilde{z}_j} \d \tilde{W}_{\hmin} (s, x) = \sum_{k=1}^n \sum_{\ell=1}^j \sqrt{\taumin \hmin}\xi_{\hmin}^{k,\ell}$, cf. \eqref{2eq:45}.
Hence, on the coarse grid with $h  = 1/M = L/\tilde M$, $L\geq1$  we construct the increments of the space-time white noise as
\begin{equation}\label{w_interpol}
\sqrt{\frac{\tau}{h}}\xi^{n,j}_h := \frac{1}{h} \sum_{k=L^2(n-1)+1}^{L^2n} \sum_{\ell=L(j-1)+1}^{Lj} \sqrt{\taumin \hmin}\xi_{\hmin}^{k,\ell} := \frac{1}{h}\int_{t_{n-1}}^{t_n} \int_{z_{j-1}}^{z_j} \d \tilde{W}_{\hmin} (s, x).
\end{equation}
We observe that $\mathbb{E} \sqrt{\frac{\tau}{h}}\xi^{n,j}_h = 0$ and $\mathbb{E}\left(\sqrt{\frac{\tau}{h}}\xi^{n,j}_h\right)^2 = \frac{\tau}{h}$.

For a mesh with mesh size $h$ we let $X_h$ be the piecewise linear
interpolant of the corresponding numerical solution.  For two numerical
solutions $X_{\hmin}$, $X_h$, computed over partitions with mesh sizes
$\hmin$, $h > \hmin$, the error in the $L^2$ norm is evaluated as
$\|X_{\hmin} - X_{h}\|_{L^2}^2:= \hmin \sum_{j=0}^{\tilde M}
\big(X_{\hmin}(z_j) - X_{h}(z_j)\big)^2$, $z_j = j \hmin$.
The experimental order of convergence is computed as $\|X_{\hmin} - X_h\|_{-\hmin}^2$, where
$\|X_h \|_{-\hmin} \equiv \|\nabla (-\Delta_{\hmin})^{-1}X_h \|_{L^2}$ is a
discrete approximation of the $H^{-1}$ norm
$\|X_h \|_{-1}=\|\nabla (-\Delta)^{-1} X_h \|_{L^2}$.  Here,
$\Delta_{\hmin}$ denotes the finite difference Laplacian \eqref{2eq:Deltah}
with $h=\hmin$, and, with a slight abuse of notation, the discrete inverse
Laplacian $Z_{\hmin} = (-\Delta_{\hmin})^{-1} X_h$ is defined as the
solution of
$$
-\Delta_{\hmin} Z_{\hmin}(z_j) = X_h(z_j)\qquad z_j = j \tilde h,\ j = 1, \dots,  \tilde M-1,
$$
with homogeneous Dirichlet boundary conditions $Z_{\hmin}(z_0)=Z_{\hmin}(z_{\tilde M}) =0$.

We simulate the discrete system (\ref{scheme1}) for a sequence of nested meshes with $h=1/M$, $M=100,400,1600,3200,6400$ over the time interval $[0,T]$ with $T=0.1$.
In Figure~\ref{fig_path_err_i003} we display the order of convergence in the discrete $H^{-1}$- and $L^2$ norms at the final time for the stochastic Zhang- and BTW model
with Bernoulli noise with intensity $\sigma=0.03$. 
The error is computed pathwise and averaged over $100$ different realizations of the noise $\xi_{\hmin}(\omega_k)$, $k=1,\dots, 100$, that is, 
$\mathbb{E}\|X_{\hmin}^{\tilde{M}^2}- X_h^{M^2}\|_{-\hmin} \approx \frac{1}{100}\sum_{k=1}^{100} \|X_{\hmin}^{\tilde{M}^2}(\omega_k)- X_h^{M^2}(\omega_k)\|_{-\hmin}$ and analogically for the $L^2$ norm.
The convergence plots for the Zhang and the BTW models are graphically indistinguishable.
We observe a linear convergence rate w.r.t. the mesh size $h$ in the $H^{-1}$ norm. In the $L^2$ norm there is no observable convergence rate for coarse mesh sizes, while for small mesh sizes the convergence rate approaches the order $1/4$. 
\begin{figure}[!htp]
\center
\includegraphics[width=0.45\textwidth]{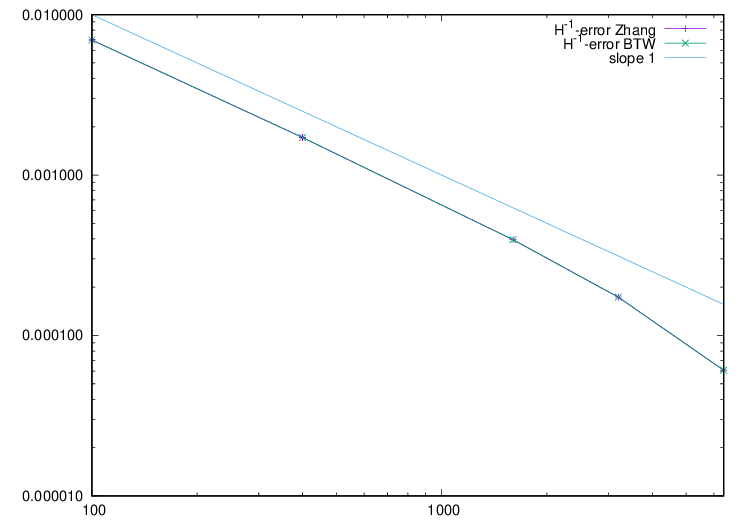}
\includegraphics[width=0.45\textwidth]{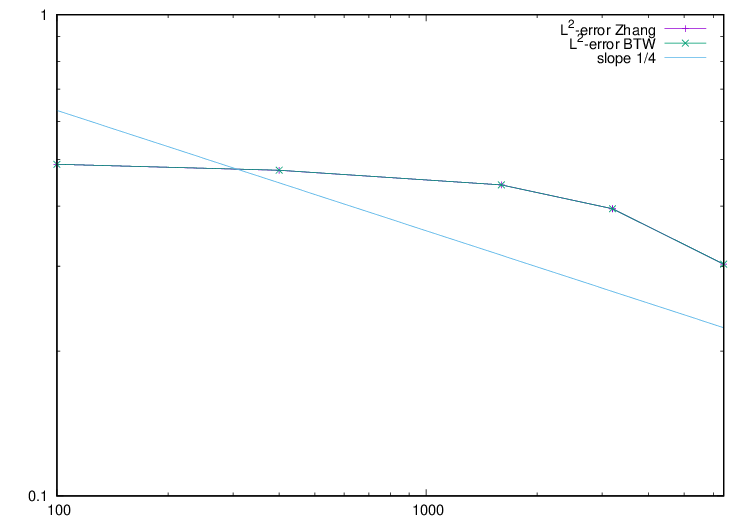}
\caption{Experimental convergence of the error in the $H^{-1}$ norm for $M=100,400,1600,3200,6400$ at time $T=0.1$ for the Zhang and BTW model with $\sigma=0.03$ (left)
and the corresponding error measured in the $L^2$ norm (right).}
\label{fig_path_err_i003}
\end{figure}

This improved convergence behavior for smaller mesh size can be explained by an interplay of the noise intensity with the dissipative effect of the diffusion: Since the variance of the noise scales with $\frac{1}{h}$, the smaller the mesh size, the larger the fluctuations of the noise. Since the diffusion is active only on supercritical sites ($|X_h| > K$) and absent on subcritical sites ($|X_h| < K$), the regularizing effect of the diffusion becomes more pronounced when the values of the solutions are driven towards larger values by an exploding variance of the noise. 

This intuitive explanation suggests that an increase of the noise intensity $\sigma$ should lead to an improved convergence behavior in the $L^2$ norm. This motivates the following simulations: In Figure~\ref{fig_path_err_i017} we display the order of convergence for stronger noise
$\sigma=0.17$, the results are again averaged over $100$ realizations of the
noise.  We observe a linear convergence rate in the $H^{-1}$ norm and a
convergence rate of order $1/4$ in the $L^2$ norm. 
\begin{figure}[!htp]
\center
\includegraphics[width=0.45\textwidth]{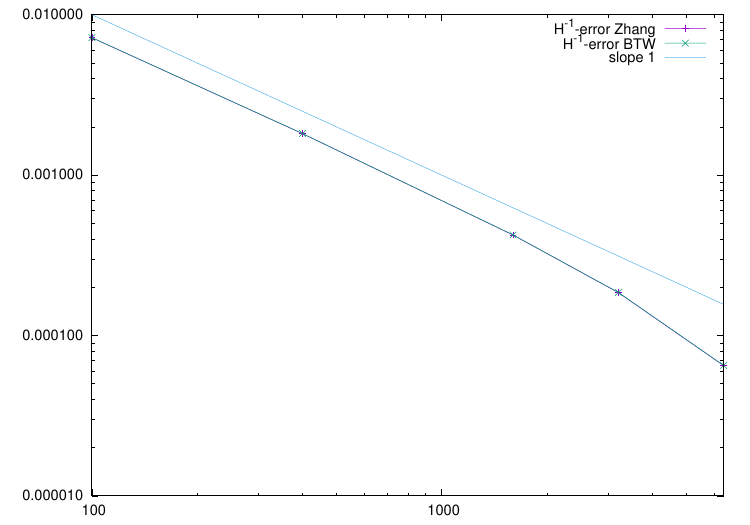}
\includegraphics[width=0.45\textwidth]{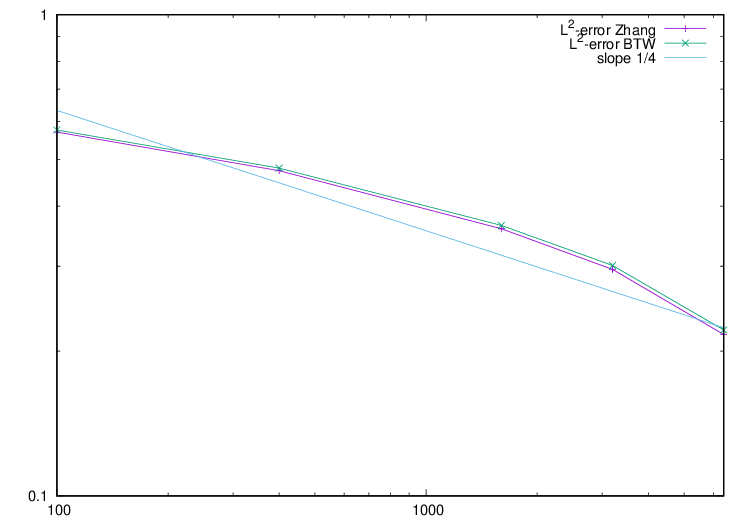}
\caption{Experimental convergence of the error in the $H^{-1}$ norm for $M=100,400,1600,3200,6400$ at time $T=0.1$ for the Zhang and BTW model with $\sigma=0.17$ (left)
and the corresponding error measured in the $L^2$ norm (right).}
\label{fig_path_err_i017}
\end{figure}

In particular, the convergence of the $L^2$ norm improves over the one observed for $\sigma=0.03$, in the sense that the rate of convergence becomes visible also at coarser mesh sizes. 
This improved behavior is consistent with the above explanation of the improved convergence for smaller mesh sizes.

The numerical solution averaged over $100$ realizations of the noise at the final time $T=0.1$, with $M=100,400,1600,3200,6400,12800$ and $\sigma=0.03$, is displayed in Figure~\ref{fig_uh_i003}. 
The solutions of the Zhang and BTW models are graphically indistinguishable.
\begin{figure}[!htp]
\center
\includegraphics[width=0.45\textwidth]{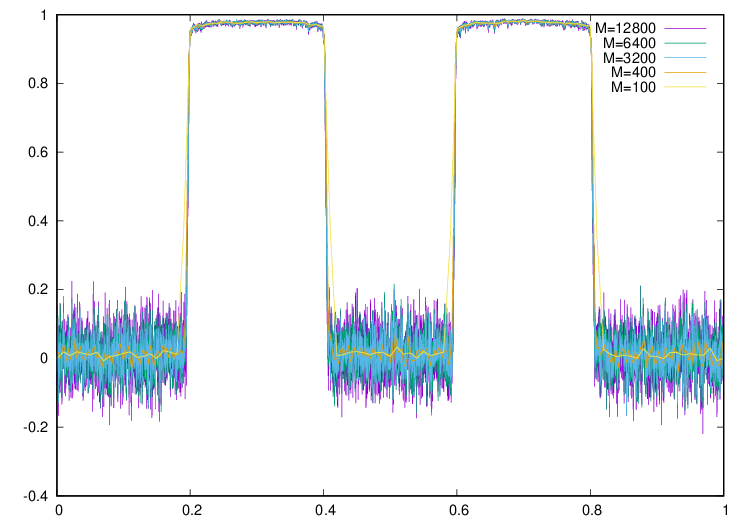}
\includegraphics[width=0.45\textwidth]{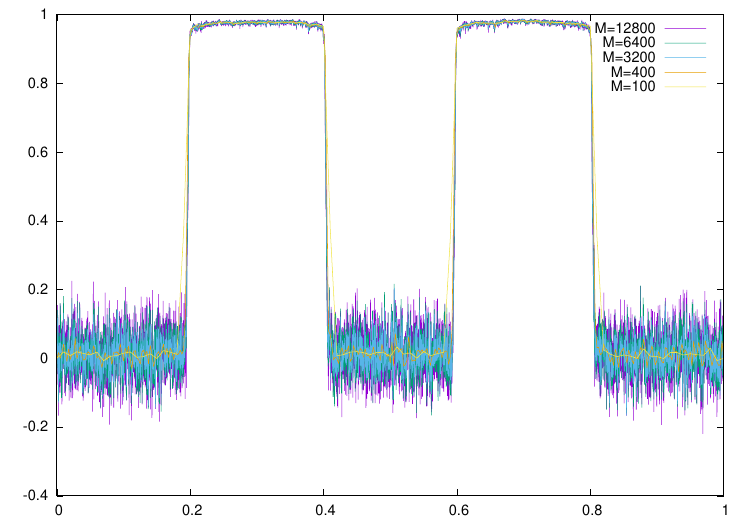}
\caption{Numerical solution $X_h$ of the Zhang model (left) and the BTW model (right) at time $T=0.1$ for $\sigma=0.03$  averaged over $100$ realizations of the noise.}
\label{fig_uh_i003}
\end{figure}
The numerical solutions averaged over $100$ realizations of the noise at the final time $T=0.1$, with $M=100,400,1600,3200,6400,12800$ 
and the stronger noise $\sigma=0.17$, is displayed in Figure~\ref{fig_uh_i017}. The simulations of the Zhang and BTW models are similar but there are noticeable differences
in the regions where $X_h \approx K$ due to the different effects of the respective nonlinearities.
\begin{figure}[!htp]
\center
\includegraphics[width=0.45\textwidth]{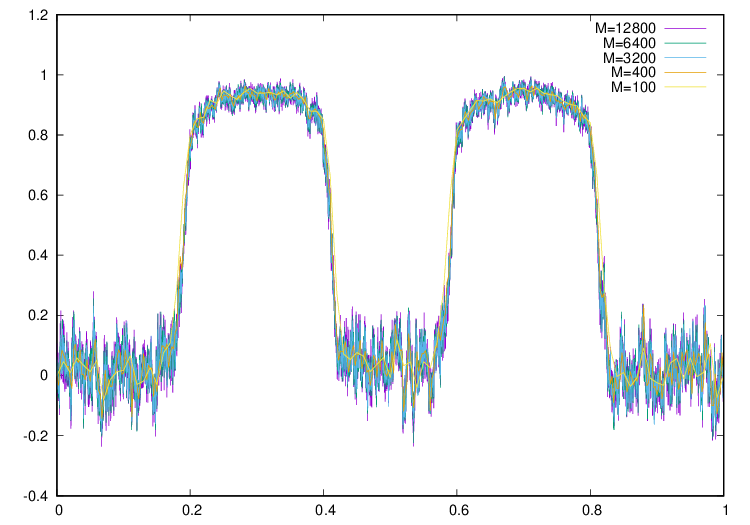}
\includegraphics[width=0.45\textwidth]{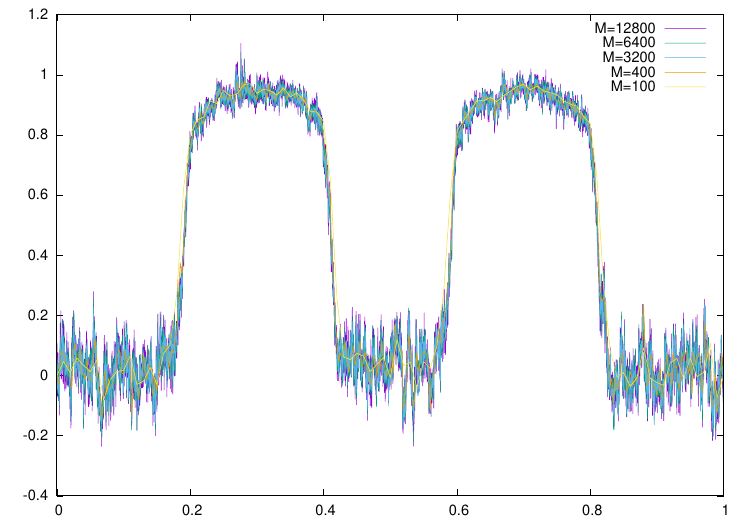}
\caption{
Numerical solution $X_h$ of the Zhang model (left) and the BTW model (right) at time $T=0.1$ for $\sigma=0.17$  averaged over $100$ realizations of the noise.}
\label{fig_uh_i017}
\end{figure}

Numerical solutions of the Zhang model for a single realization of the noise for $\sigma=0.03, 0.17$ are displayed in Figure~\ref{fig_uh_zhang_path}. Again, 
we observe the improved ''smoothing'' effect for stronger noise intensity $\sigma=0.17$.
\begin{figure}[!htp]
\center
\includegraphics[width=0.45\textwidth]{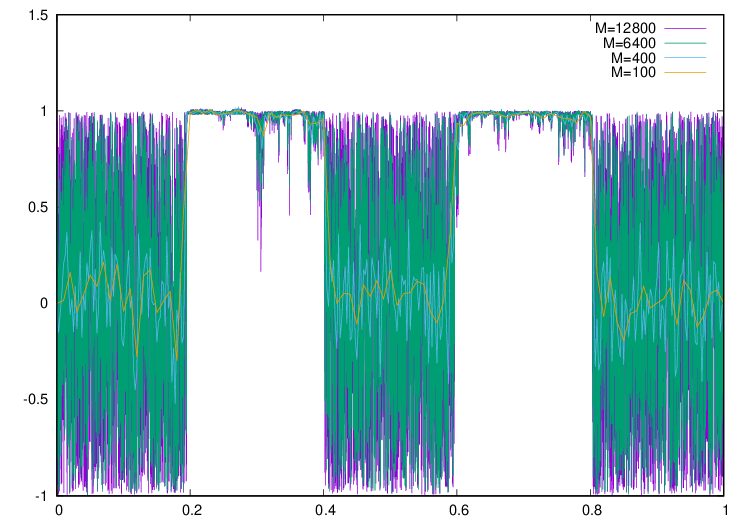}
\includegraphics[width=0.45\textwidth]{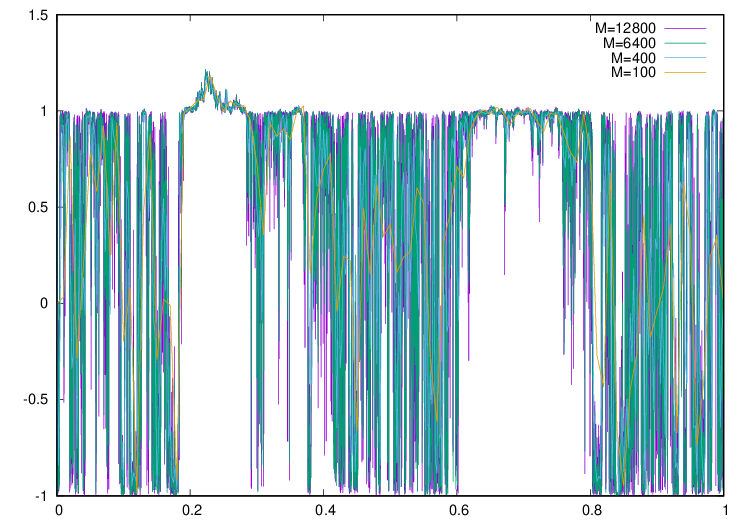}
\caption{Single realization of the numerical solution $X_h$ of the Zhang model for $\sigma=0.03$ (left) and $\sigma=0.17$ (right) at time $T=0.1$ for different mesh size.}
\label{fig_uh_zhang_path}
\end{figure}

\subsection{Power law scaling}\label{sec_num_power}

A fundamental property of the original BTW model is the
observation of power law scalings without explicit tuning of parameters to
critical values, see \cite{BTW88}. In particular, it is observed that the
size/duration of avalanches shows a power law scaling. In this section, we
investigate the validity of such power law scalings for the weakly driven
BTW/Zhang model \eqref{1eq:2-new} and their dependency on the grid size. 

More precisely, we consider the discrete model
\eqref{scheme1} in spatial dimension $d=2$ and, if not mentioned otherwise, we set $K=10$, $D=0.25$ and $\mu=10^{-4}$, $\sigma=0.01$. 
The spatial domain is taken to be a unit square, that is covered by a uniform grid $z_{i,j} = (i h, j h)$ 
with mesh size $h = 1/M$. The discrete  two-dimensional Laplace operator is defined as 
$$\Delta_h X(z_{i,j}) =\frac{1}{h^2} \Big(-4 X(z_{i,j}) + X(z_{i-1,j}) + X(z_{i+1,j}) + X(z_{i,j-1}) + X(z_{i,j+1})\Big).$$

In all simulations below, the initial condition is chosen randomly and subcritical, that is, $\max_j |X_h^{0}(z_j)| \leq K$. This is realized by sampling a random initial condition and subsequently simulating only the deterministic dynamics without forcing until a subcritical state is attained. This state is then taken as the initial condition for the subsequent simulations.

The duration and size of avalanches is defined as follows: Starting from a ``subcritical'' point in time  $n_0 \geq 0$,
that is, $n_0$ such that all sites are subcritical in the sense that
$\max_j |X_h^{n_0}(z_j)| \leq K$, we say that an avalanche occurs at time
point $n_*> n_0$ if $\max_{j}|X_h^{n}(z_j)| \leq K$ for $n_0 \leq n< n_*$
and $\max_{j}|X_h^{n_*}(z_j)| > K$. The duration of this avalanche is then
defined as the number of time-steps until the numerical solution reaches a
subcritical level again, that is, until $\max_j |X^{n^*}_h(z_j)|\leq K$ is
reached for some $n^* > n_*$.  The avalanche size is defined as
$\#\{z_j:\ |X_h^{n}(z_j)| > K\ \text{for some}\ n\in [n_*,n^*]\}$.

Note that in dimension $d=2$ the existence and regularity of solutions of the SPDE \eqref{eq_zhang2}, as well as the convergence of the numerical approximation \eqref{scheme1}
are open problems. Nevertheless, the numerical results reported in this section reveal power-law characteristics which appear to be  preserved for decreasing mesh size.

Figure~\ref{fig_aval_2d_l30}  displays log-scale plots of avalanche sizes and durations
depending on different values of the parameters. We observe that, up to a certain threshold, the values of $\mu$ have a negligible effect on the statistics of the avalanches.
Furthermore, we observe that the noise intensity $\sigma$ mainly influences
the lower range of the size and duration of the avalanches,
i.\,e.\,larger noise variance increases the frequency of smaller
avalanches.

\begin{figure}[!htp]
\center
\includegraphics[width=0.45\textwidth]{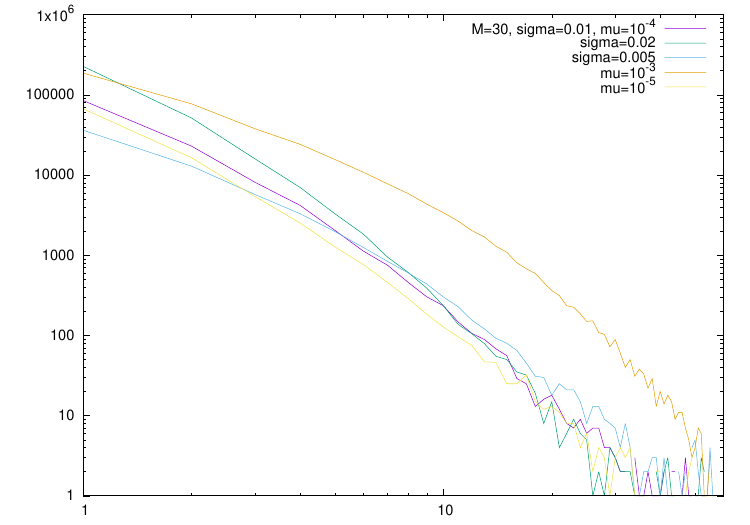}
\includegraphics[width=0.45\textwidth]{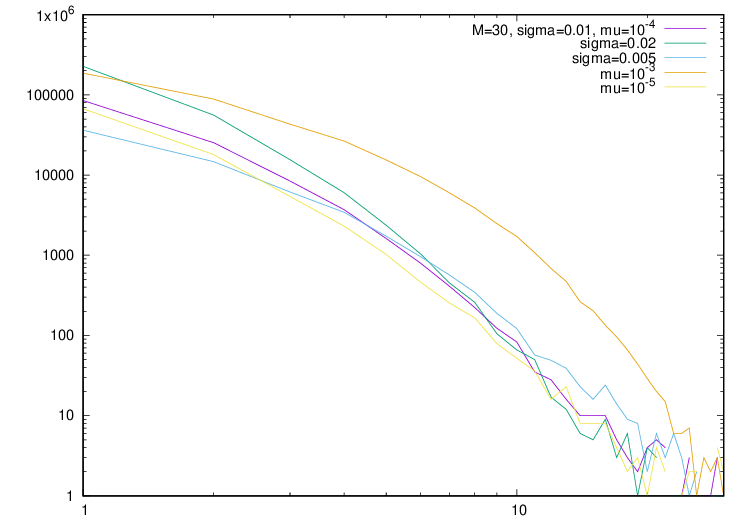}
\caption{Log-scale plot of the frequency of avalanche sizes (left) and the duration of the avalanches (right).}
\label{fig_aval_2d_l30}
\end{figure}

Next, we examine the dependence of the avalanche statistics on the mesh size.
In Figure~\ref{fig_aval_2d_conv} we display the log-scale plot of the avalanche size and duration for $M=15,30,60$.
We observe that the power-law distributions for decreasing mesh size exhibit a similar shape.
\begin{figure}[!htp]
\center
\includegraphics[width=0.45\textwidth]{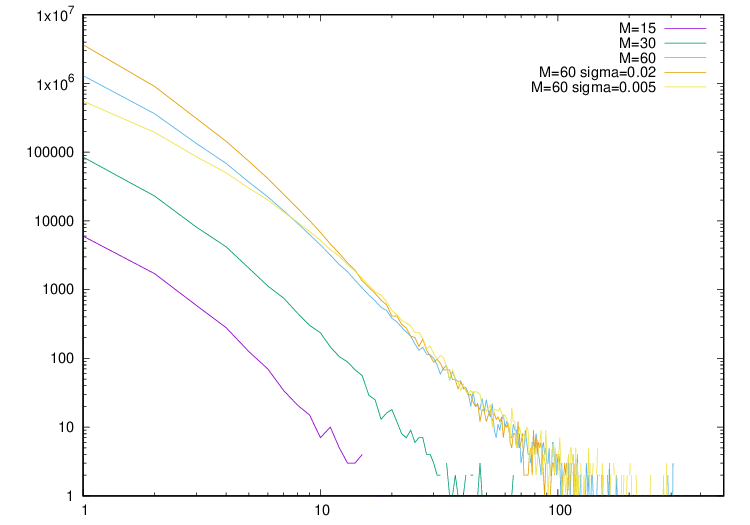}
\includegraphics[width=0.45\textwidth]{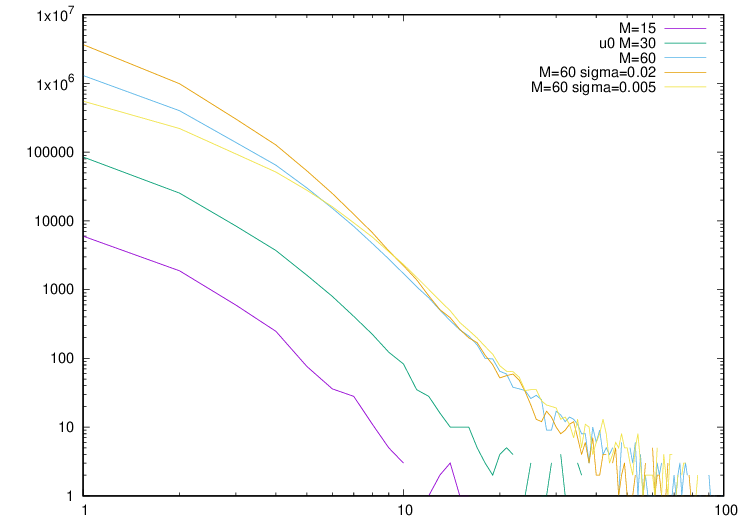}
\caption{Log-scale plot of the the frequency of avalanche sizes (left) and the duration of the avalanches (right) for $M=15,30,60$.}
\label{fig_aval_2d_conv}
\end{figure}

We compare the scaling for the BTW model and the Zhang model in Figure~\ref{fig_aval_2d_btw}. The scaling appears to be qualitatively similar with the difference that the frequency of smaller avalanches is higher in the BTW model.
\begin{figure}[!htp]
\center
\includegraphics[width=0.45\textwidth]{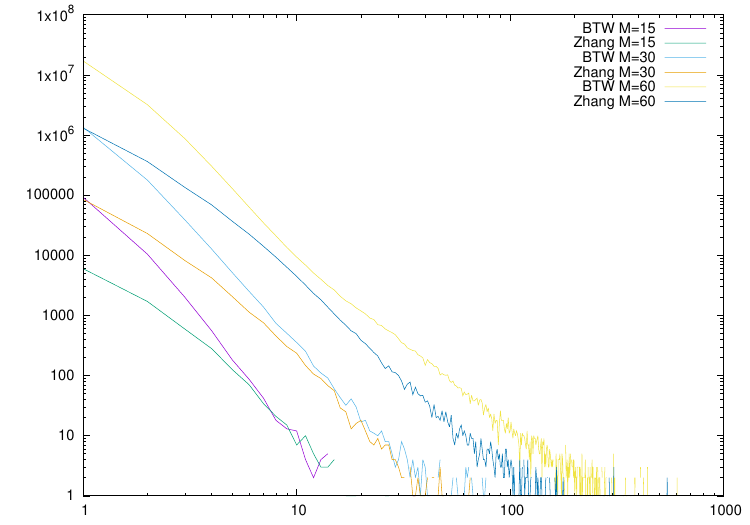}
\includegraphics[width=0.45\textwidth]{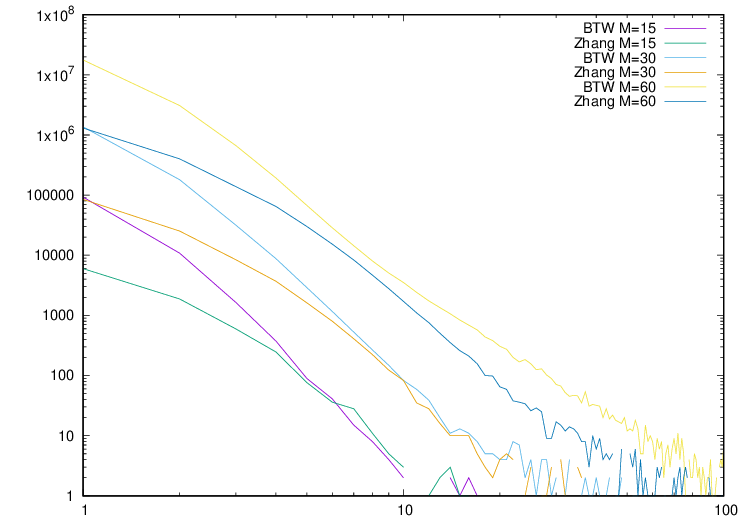}
\caption{Log-scale plot of the the frequency of avalanche sizes (left) and the duration of the avalanches (right): comparison of BTW and Zhang models for $M=15,30,60$.}
\label{fig_aval_2d_btw}
\end{figure}

\subsection{Scaling with totally asymmetric noise}

The original BTW model introduced in \cite{BTW88} enforces a strict
separation of time scales between the random forcing of the system and its
relaxation into subcritical states by diffusion. This is realized by stopping
the forcing during an avalanche until a subcritical state is reached. In
addition, in \cite{BTW88} the noise is totally asymmetric, in the sense
that energy is only added.  In contrast, in the discrete model
(\ref{scheme1}) both dynamics are active simultaneously, and energy is
randomly added or subtracted, with positive average. The relative speed of
driving by random forcing and diffusion can be steered by varying their
relative intensities $D,\mu,\sigma$.

In this section, we compare the power-law scaling of the discrete model in two spatial dimensions with weakly
asymmetric noise (i.e., noise also taking negative values) to the same
model with a totally asymmetric noise instead (i.e., noise which only takes
positive values).  We consider the same parameters as in the previous
section except for $\mu=0$, $\sigma=10^{-3}$ and the random variables
$(\xi^{n,j}_h)$ are chosen to have an i.i.d.~Bernoulli distribution with
values $\{0,1\}$ achieved with equal probability
$\frac{1}{2}$.

In view of \cite{BTW88},
we compare the avalanche statistics of simulations with strict scale
separation to those with simultaneous forcing. In the first regime, the
random forcing is switched off during an avalanche until the system reaches
a subcritical state, while in the latter regime,
the forcing remains active during avalanches.

In Figure~\ref{fig_aval_2d_pos} we display the the log-scale plots of
avalanche sizes and durations. We observe that the avalanche distribution for
the model with simultaneous forcing and diffusion and with larger mesh size
$M=30$ and $\sigma=10^{-3}$ obeys a power law. For larger intensity of the
asymmetric noise $\sigma=4\cdot 10^{-3}$ the power law is no longer
preserved and the distribution is biased towards avalanches with larger
size and duration. A
similar situation occurs in the simulation with smaller mesh size $M=60$,
$\sigma=10^{-3}$. In contrast, enforcing the strict scale separation of
driving force and diffusion as in \cite{BTW88}, the avalanche distribution obeys a power law
even in the case $M=60$.

An explanation for these observations is  that the
  effect of ``overlapping avalanches'' becomes dominant for large noise
  intensities: In systems near criticality, multiple simultaneous
  avalanches may occur, which, due to the global character of the avalanche
  definition, are counted as one large event. As a result, large
  avalanches would be overrepresented in the simulation. This effect is avoided when the strict separation of forcing and relaxation scale is enforced.

\begin{figure}[!htp]
\center
\includegraphics[width=0.45\textwidth]{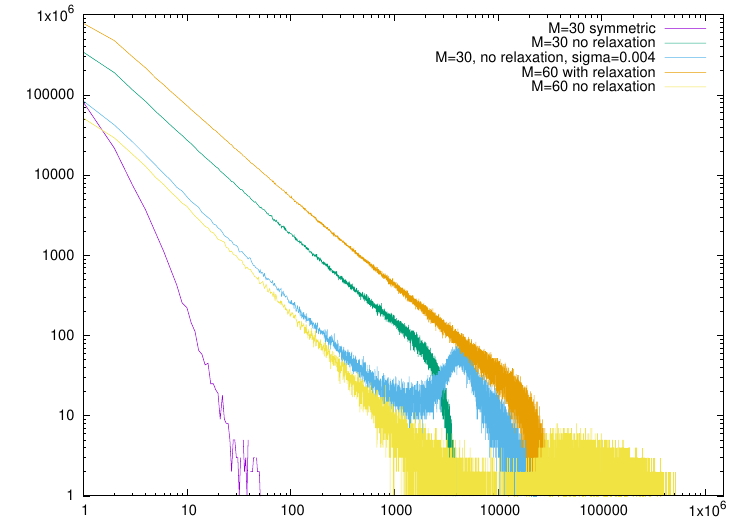}
\includegraphics[width=0.45\textwidth]{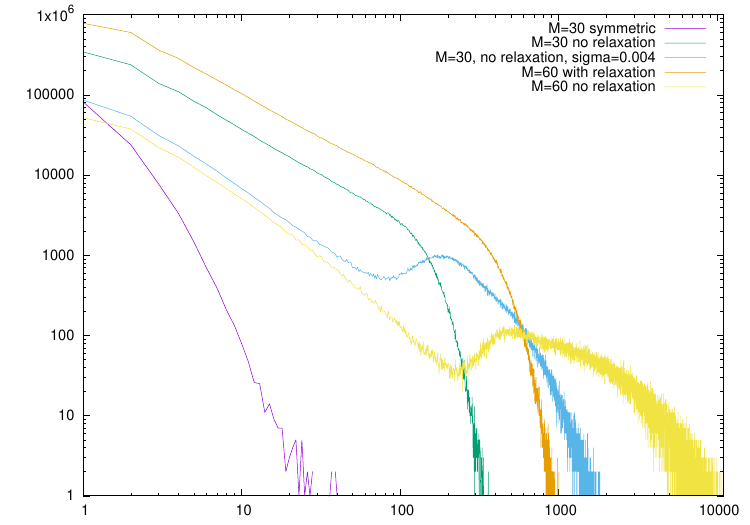}
\caption{Log-scale plot of the frequency of avalanche sizes (left) and the duration of the avalanches (right) for asymmetric noise.}
\label{fig_aval_2d_pos}
\end{figure}

\subsection{Simulations and scaling limits in 2D}\label{sec_num2d}

In this section we investigate the existence of scaling limits of the discrete  dynamics (\ref{scheme1}) in spatial dimension $d=2$, with a focus on the effect of the lower regularity of the limiting space-time white noise compared to one spatial dimension.

The simulation parameters are as follows: $T = 0.015625$, $\sigma=1$, $\mu=0$, $D=0.25$, $K=1$, $\tau=h^2$, $h=1/128$,  the spatial domain is the unit square $(0,1)^2$ and
the initial condition is  taken as $x_h^0 =
\frac{1}{2}\Ind{[0.25,0.75]\times[0.25,0.75]}$.

In Figure~\ref{fig_white_2d} we display the expected value of the discrete model \eqref{scheme1} with Zhang nonlinearity at the final time $T$ averaged over $10^6$ realizations and with mesh size
$h=1/128$. The analogous expected value of \eqref{scheme1} with BTW nonlinearity is displayed in Figure~\ref{fig_white_2d_btw}
(left). For better comparison the color range is restricted to
$[0,0.5]$. There are only minor overshoots of these values due to the error
of the Monte-Carlo approximation of the expected value in the stochastic
case. In Figure~\ref{fig_white_2d} (right), for comparison, we display the expected value of \eqref{scheme1} with $\phi(x)=x$ corresponding to the stochastic heat equation

We observe that the simulation of \eqref{scheme1} with Zhang nonlinearity is very similar to the expectation of the numerical solution of the stochastic heat
equation (i.e., the linear counterpart of \eqref{scheme1}). In contrast, the simulation of \eqref{scheme1} with BTW nonlinearity is closer to the initial condition, that is, the effect of the diffusion is weaker in this case. 

To offer an explanation of this observation, we note that the computed probability of the solution to be supercritical, i.e.,
$ \frac{\mathbb{E} \left[\#\{z_{i,j}:\,\, |X_h^n(z_{i,j})| \ge K\}\right]}{\#\{z_{i,j}\}}$ (not counting the grid points at the boundary)
is above $0.50$, and the probability increases with smaller mesh size due to the scaling of the noise with $h^{-1}$, see Figure~\ref{fig_kplus} where we display the computed evolution 
of the probability for the Zhang model. Since in the supercritical ``regime'', the effect of the diffusion of the Zhang nonlinearity is identical to that of the stochastic heat equation, this could offer an explanation of the observation above. In contrast, the BTW nonlinearity does not equal that of the heat equation even for supercritical values of the solution. Therefore, one expects that,
even for small grid size $h$, the BTW model behaves differently from the stochastic heat equation, which is indeed observed in Figure~\ref{fig_white_2d_btw}. 
\begin{figure}[!htp]
\center
\includegraphics[width=0.48\textwidth]{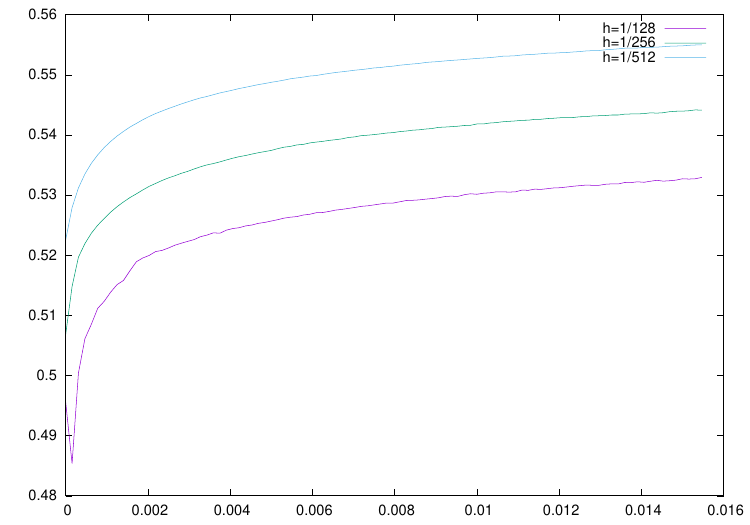}
\caption{Evolution of $\frac{\mathbb{E} \left[\#\{z_{i,j}:\,\, |X_h^n(z_{i,j})| \ge K\}\right]}{\#\{z_{i,j}\}}$
for $h=1/128, 1/256, 1/512$ for the Zhang model.}
\label{fig_kplus}
\end{figure}

To illustrate the effect of the fluctuations and the resulting irregularity of the solution
we display one realization of the solution of the BTW model at the final time in Figure~\ref{fig_white_2d_btw} (right). In contrast to the 1d case (see Figures~\ref{fig_uh_i003} and \ref{fig_uh_i017}) the solution oscillates and exceeds the critical value $K$.

We note that in the deterministic case, for the considered parameters, the solutions of the Zhang and BTW model are equal to the initial condition since $\max_{i,j} |X^0_h(z_{i,j})| \leq \frac{1}{2} < K$.

Repeating the simulations with the initial condition taken to be the (unscaled) indicator function of a square with side $\frac{1}{2}$ placed at the center of the domain
yielded analogous results (not displayed).

\begin{figure}[!htp]
\center
\includegraphics[width=0.48\textwidth]{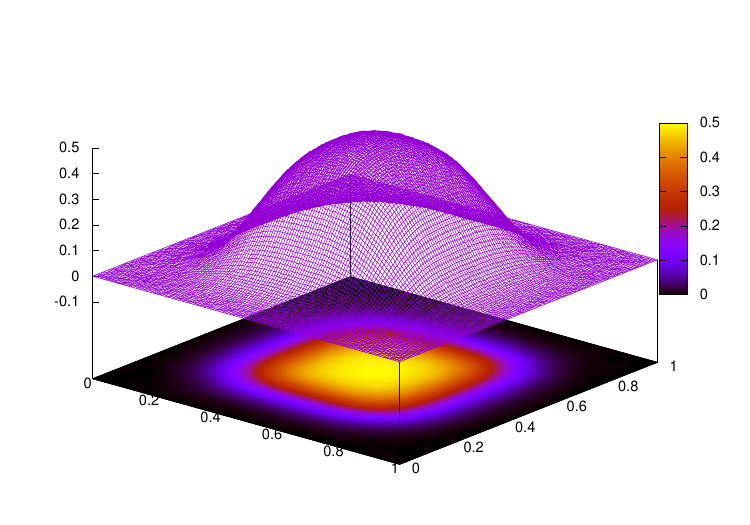}
\includegraphics[width=0.48\textwidth]{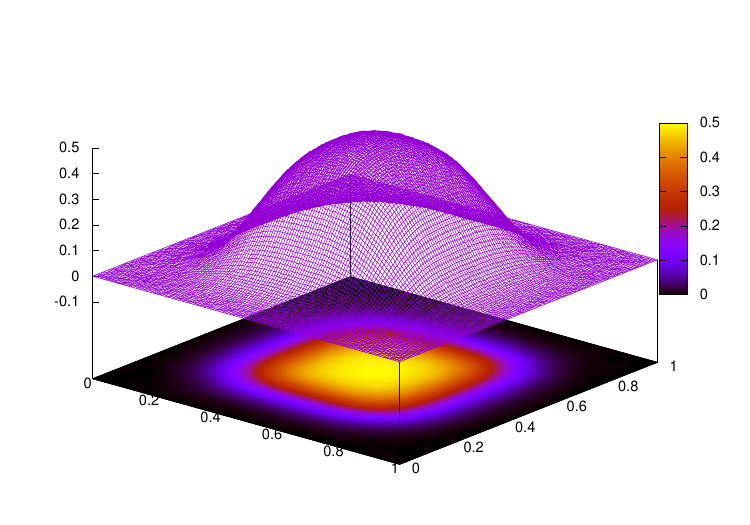}
\caption{Expected value of the numerical solution computed with the Zhang model (left) and the stochastic heat equation (right) at time $t=T$.}
\label{fig_white_2d}
\end{figure}

\begin{figure}[!htp]
\center
\includegraphics[width=0.48\textwidth]{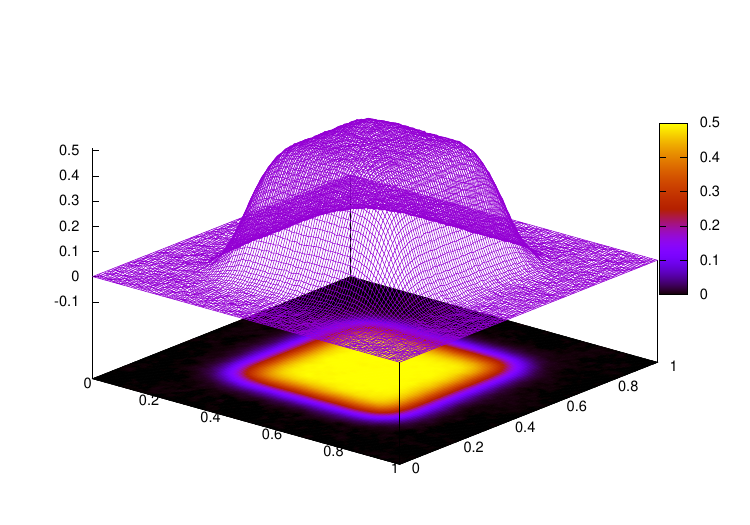}
\includegraphics[width=0.48\textwidth]{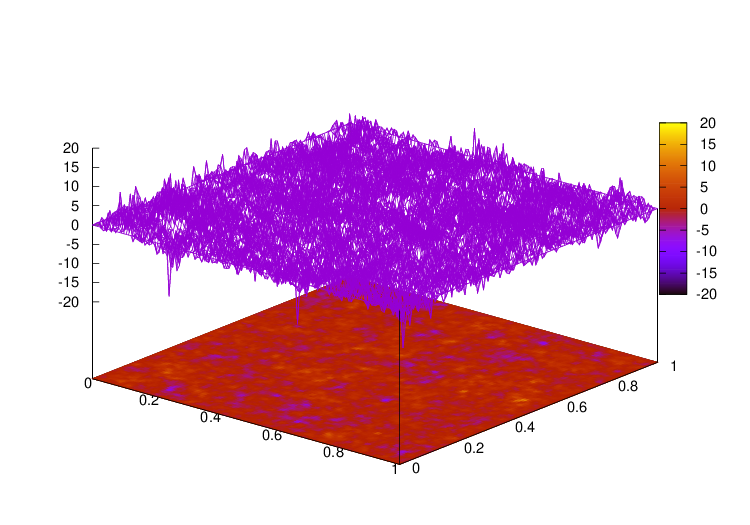}
\caption{Expected value of the numerical solution computed with the BTW model (left) and one realization of the solution of the BTW model (right) at time $t=T$.}
\label{fig_white_2d_btw}
\end{figure}

\appendix

\section{Uniqueness of laws of weak solutions}\label{2sec:uniqueness}

In this section, we prove the following,
using the main result from \cite{KurtzII}.
\begin{thm}\label{2uniqueness-thm}
  The processes $(\tilde X, \tilde W)$ of every weak solution to
  \eqref{2eq:43} have the same law with respect to the Borel
  $\sigma$-algebra of $L^2([0,T]; L^2)\times\cC([0,T]; \Hd)$.
\end{thm}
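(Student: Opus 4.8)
The plan is to deduce uniqueness in law from \emph{pathwise} uniqueness, using the Yamada--Watanabe--Engelbert-type transfer theorem of Kurtz \cite{KurtzII}. Since a weak solution has already been produced in Theorem \ref{2main-thm}, it suffices to establish pathwise uniqueness: any two weak solutions $(\tilde X_1, \tilde W)$ and $(\tilde X_2, \tilde W)$ in the sense of Definition \ref{2Def-weak-soln}, defined on a common stochastic basis, driven by the \emph{same} cylindrical $\mathrm{Id}$-Wiener process $\tilde W$ and started from the same datum $x_0$, should coincide $\tilde\P$-almost surely. The abstract step transferring pathwise to weak uniqueness I would treat as a black box; Kurtz's framework is designed precisely for equations and inclusions of the present multivalued type, so the analytic content lies entirely in the pathwise estimate below.

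First I would set up the difference process. Let $\tilde Y_1, \tilde Y_2 \in L^2(\tilde\Om\times[0,T]; L^2)$ be auxiliary processes associated with $\tilde X_1, \tilde X_2$ through \eqref{2eq:111}--\eqref{2eq:112}, so $\tilde Y_i(t) \in \phi(\tilde X_i(t))$ almost everywhere. Subtracting the two copies of \eqref{2eq:111}, the additive noise $\tilde W$, the drift $\mu t$ and the initial value $x_0$ all cancel, leaving
\begin{equation*}
  U(t) := \tilde X_1(t) - \tilde X_2(t) = \int_0^t \Delta\bigl(\tilde Y_1(r) - \tilde Y_2(r)\bigr)\,\d r
\end{equation*}
$\tilde\P$-almost surely in $(L^2)'$. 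Thus $U$ carries no stochastic integral: pathwise it lies in $L^2([0,T]; L^2)$ with $\partial_t U = \Delta(\tilde Y_1 - \tilde Y_2)$, and since $(-\Delta)^{-1}\partial_t U = -(\tilde Y_1 - \tilde Y_2) \in L^2([0,T]; L^2)$, the pair $(U, \partial_t U)$ belongs to the Gelfand triple $L^2 \hookrightarrow \Hd \hookrightarrow (L^2)'$ with $\Hd$ as pivot (here $(L^2)'$, the dual relative to the $\Hd$-inner product, is identified with $H^{-2}$ via $-\Delta$). Consequently $U$ has a $\cC([0,T]; \Hd)$ representative and the classical deterministic chain rule for such Gelfand-triple curves (as in \cite[Theorem 3.1]{Lions-Magenes}) applies.

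Next I would run the $\Hd$-energy estimate. Using $U(0) = 0$ and the chain rule, for almost every $t$,
\begin{align*}
  \tfrac1{2}\norm{U(t)}_\Hd^2
  &= \int_0^t \sp{(-\Delta)^{-1}\partial_t U(r)}{U(r)}_{L^2}\,\d r\\
  &= -\int_0^t \sp{\tilde Y_1(r) - \tilde Y_2(r)}{\tilde X_1(r) - \tilde X_2(r)}_{L^2}\,\d r \leq 0,
\end{align*}
where the final inequality is the integrated monotonicity of $\phi$: since $\phi$ is maximal monotone on $\R$ and $\tilde Y_i(r,x) \in \phi(\tilde X_i(r,x))$ pointwise, the integrand is nonnegative $(\d r\otimes\d x)$-almost everywhere, \emph{independently} of the particular measurable selections $\tilde Y_i$. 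Hence $\norm{U(t)}_\Hd = 0$ for all $t$, so $\tilde X_1 = \tilde X_2$ in $\Hd$, and as both take values in $L^2$, also in $L^2$, $\tilde\P$-almost surely. This is pathwise uniqueness, and Kurtz's theorem then upgrades it to uniqueness of the joint law of $(\tilde X, \tilde W)$ on $L^2([0,T]; L^2)\times\cC([0,T]; \Hd)$.

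The hard part will not be the monotonicity cancellation, which is elementary and, crucially, insensitive to the nonuniqueness of the selections $\tilde Y_i$ (so $\tilde X$ is unique even though $\tilde Y$ need not be). Rather, the two delicate points are: justifying the $\Hd$ chain rule for a difference process whose time derivative is only the distributional Laplacian of an $L^2$ function, which forces one into the triple $L^2 \hookrightarrow \Hd \hookrightarrow (L^2)'$ described above; and verifying that the inclusion \eqref{2eq:43}, with its degenerate, discontinuous and multivalued nonlinearity, genuinely fits the hypotheses of the general transfer theorem in \cite{KurtzII}, which requires recasting the solution concept of Definition \ref{2Def-weak-soln} in Kurtz's language of compatible solutions and checking the measurability and joint-measurability conditions there.
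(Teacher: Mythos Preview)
Your proposal is correct and follows essentially the same route as the paper: pathwise uniqueness via the $\Hd$-energy identity for the difference $U=\tilde X_1-\tilde X_2$ and monotonicity of $\phi$, combined with Kurtz's transfer theorem \cite{KurtzII}. The paper carries out the second ``delicate point'' you flag by introducing the notion of a \emph{pre-solution} (a Borel measure on path space concentrated on a measurable set $M_3$ encoding \eqref{2eq:111}--\eqref{2eq:112}), proving the relevant measurability via the Kuratowski theorem, and establishing pointwise uniqueness at that level; for the chain rule it invokes \cite[Proposition 23.23]{ZeidlerIIA} on the triple $L^2\hookrightarrow\Hd\hookrightarrow(L^2)'$, exactly as you anticipate.
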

We first give some preparatory
results and helpful notions.

\begin{Def}
  We define a multivalued operator by its graph $\cA_T\subset L^2([0,T];
  L^2)\times L^2([0,T]; L^2)$, given by
  \begin{equation}\label{2eq:59}
    (f, g)\in \cA_T \quad \text{if and only if}\quad g\in \phi_2(f) \
    \text{for almost every }(t,x)\in[0,T]\times[0,1].
  \end{equation}
\end{Def}

\begin{lem}\label{2cAT-max-mon}
  The operator $\cA_T$ is maximal monotone.
\end{lem}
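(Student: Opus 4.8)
The plan is to prove monotonicity directly from the pointwise structure of $\phi_2$ and then reduce maximality to a surjectivity statement handled by Minty's theorem. Monotonicity is immediate: if $(f_1,g_1),(f_2,g_2)\in\cA_T$, then $g_i(t,x)\in\phi_2(f_i(t,x))$ for almost every $(t,x)$, so the monotonicity of the graph $\phi_2$ on $\R$ gives $(g_1(t,x)-g_2(t,x))(f_1(t,x)-f_2(t,x))\geq 0$ almost everywhere, and integrating over $[0,T]\times[0,1]$ yields $\sp{g_1-g_2}{f_1-f_2}_{L^2([0,T];L^2)}\geq 0$. For maximality, I would invoke the characterization that a monotone operator is maximal monotone as soon as $R(I+\cA_T)=L^2([0,T];L^2)$ (Minty's theorem, \cf \cite{Barbu}). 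Thus it suffices to show that for every $w\in L^2([0,T];L^2)$ there exists $(f,g)\in\cA_T$ with $f+g=w$.

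To solve this resolvent equation I would work pointwise. Since $\phi_2$ is maximal monotone on $\R$ and $\tilde\phi_2(0)=0$ gives $0\in\phi_2(0)$, the resolvent $J_1:=(I+\phi_2)^{-1}$ is single-valued, defined on all of $\R$, $1$-Lipschitz, and satisfies $J_1(0)=0$. Given $w$, I set $f(t,x):=J_1(w(t,x))$ and $g:=w-f$. Then $f$ is measurable as the composition of the continuous map $J_1$ with the measurable function $w$, and the contraction property yields $\abs{f(t,x)}=\abs{J_1(w(t,x))-J_1(0)}\leq\abs{w(t,x)}$, so that $f\in L^2([0,T];L^2)$ and consequently $g=w-f\in L^2([0,T];L^2)$. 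By the definition of the resolvent, $g(t,x)=w(t,x)-f(t,x)\in\phi_2(f(t,x))$ almost everywhere, hence $(f,g)\in\cA_T$ with $f+g=w$. This establishes $R(I+\cA_T)=L^2([0,T];L^2)$ and therefore maximality.

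The only delicate point, and the step I expect to require the most care, is the measurability and $L^2$-integrability of the pointwise resolvent; both are resolved by the $1$-Lipschitz bound together with $J_1(0)=0$, the essential structural input being that $\phi_2$ is a maximal monotone graph on $\R$ passing through the origin. As an alternative to the Minty route, one could write $\phi_2=\partial j$ for a proper convex lower semicontinuous $j\colon\R\to(-\infty,+\infty]$ (every maximal monotone graph on $\R$ is cyclically monotone), introduce the integral functional $J(f)=\int_0^T\int_0^1 j(f(t,x))\,\d x\,\d t$, and identify $\partial J$ with the pointwise subdifferential $\cA_T$ via Rockafellar's theorem on integral functionals, concluding from the fact that subdifferentials of proper convex lower semicontinuous functionals are maximal monotone. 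I would favour the Minty argument here, since it avoids the explicit construction of $j$ and the verification of the normal-integrand hypotheses needed for the subdifferential identification.
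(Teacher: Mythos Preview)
Your proof is correct. The monotonicity step is exactly right, and the Minty argument goes through as written: the resolvent $J_1=(I+\phi_2)^{-1}$ is indeed everywhere defined, single-valued, $1$-Lipschitz with $J_1(0)=0$, and from this the measurability and $L^2$-bound for $f=J_1\circ w$ follow as you say.

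However, you take a different route from the paper. The paper does precisely what you describe as the \emph{alternative} in your final paragraph: it writes $\phi_2=\partial\tilde\psi$ for the explicit convex function $\tilde\psi(x)=\Ind{\{\abs{x}\geq 1\}}(x^2-1)$, introduces the integral functional $\vphi_T(u)=\int_0^T\int_0^1\tilde\psi(u)\,\d x\,\d t$, and then cites \cite[Theorem 16.50]{Bauschke} to obtain in one stroke that $\vphi_T$ is proper, convex, lower semicontinuous and that $\partial\vphi_T=\cA_T$; maximal monotonicity then follows from \cite[Theorem 2.8]{Barbu}. So the paper outsources both the normal-integrand hypotheses and the subdifferential identification to a single reference, which makes the argument very short. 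Your Minty route is more self-contained and works verbatim for any maximal monotone graph on $\R$ through the origin, not just subdifferentials (a distinction that is of course vacuous on $\R$ but would matter in higher dimensions). Either approach is entirely adequate here; your remark that you ``avoid the explicit construction of $j$'' is slightly ironic given that the paper writes $\tilde\psi$ down in one line.
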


\begin{proof}
  By \cite[Theorem 2.8]{Barbu}, it is enough to show that $\cA_T$ is the
  subdifferential of a convex, proper and
  lower-semicontinuous functional $\vphi: H\to [0,\infty]$ on a real Banach
  space $H$. To this end, define $\tilde\psi: \R\to[0,\infty)$ by
  \begin{displaymath}
    \tilde\psi(x) = \Ind{\{\abs{x}\geq 1\}} (x^2 - 1),
  \end{displaymath}
  which is proper, convex and continuous, and for which we have
  $\partial \tilde\psi = \phi_2$.  We note that $H:= L^2([0,T]; L^2)$
  is a Hilbert space. Defining
  \begin{equation}\label{2eq:47}
    \vphi_T: H\to [0,\infty], \quad \vphi_T(u) =
    \int_0^T\int_0^1\tilde\psi(u(t,x))\d x \d t, 
  \end{equation}
  we obtain by \cite[Theorem 16.50]{Bauschke} that $\vphi_T$ is convex,
  proper and lower-semicontinuous and $\cA_T = \partial \vphi_T$, as required.
\end{proof}

\begin{lem}\label{2closed-graph-vorne}
  The graph $\cA_T$ is a closed subset of $L^2([0,T];
    L^2)\times L^2([0,T];L^2)$ and thus
  measurable with respect to the Borel $\sigma$-algebra on $L^2([0,T]; L^2)$.
\end{lem}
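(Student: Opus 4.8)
The plan is to establish closedness of $\cA_T$ directly; the stated Borel-measurability then follows at once, since $H := L^2([0,T]; L^2)$ is a metric space and every closed subset of a metric space (hence of the product $H\times H$) is Borel. Because $H\times H$ is metrizable, it suffices to argue sequentially: I would take an arbitrary sequence $(f_n, g_n)_{n\in\N}\subset\cA_T$ with $f_n\to f$ and $g_n\to g$ strongly in $H$, and show that the limit satisfies $(f,g)\in\cA_T$, that is, $g\in\phi_2(f)$ for almost every $(t,x)\in[0,T]\times[0,1]$.

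The core step is a reduction to the pointwise graph of $\phi_2$. Since $f_n\to f$ and $g_n\to g$ in $L^2([0,T]\times[0,1])$, I would pass to a nonrelabeled subsequence along which $f_n\to f$ and $g_n\to g$ hold pointwise for almost every $(t,x)$. At such a point, $g_n(t,x)\in\phi_2(f_n(t,x))$ for every $n$ by the definition of $\cA_T$. The key fact I would invoke is that the graph of $\phi_2$ is a closed subset of $\R\times\R$: this is clear from its explicit form (with the vertical segments $\{1\}\times[0,1]$ and $\{-1\}\times[-1,0]$ included), or abstractly from $\phi_2$ being maximal monotone on $\R$. Letting $n\to\infty$ along the subsequence then yields $g(t,x)\in\phi_2(f(t,x))$, and since this holds for almost every $(t,x)$, we conclude $(f,g)\in\cA_T$.

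The only subtle point is the interplay between $L^2$-convergence and the pointwise closedness of $\phi_2$: strong convergence in $H$ gives almost-everywhere convergence only along a subsequence. This is harmless, however, because the limit $(f,g)$ is fixed in advance by the convergence of the full sequence, so the concluding statement $g\in\phi_2(f)$ a.e.\ is a property of $(f,g)$ alone and does not depend on the chosen subsequence. As an alternative that avoids pointwise arguments altogether, one could appeal to Lemma \ref{2cAT-max-mon}: a maximal monotone operator on a Hilbert space has a graph closed in the strong$\times$weak topology, hence in particular in the strong$\times$strong topology, which gives the closedness of $\cA_T$ immediately.
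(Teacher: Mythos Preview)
Your proposal is correct. The paper's own proof is precisely your alternative route at the end: it invokes Lemma~\ref{2cAT-max-mon} (that $\cA_T$ is maximal monotone) together with the general fact that the graph of a maximal monotone operator on a Hilbert space is closed (\cite[Proposition~2.1]{Barbu}), and then notes that closed sets are Borel.

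Your primary argument takes a more elementary, hands-on path: rather than relying on the abstract machinery of maximal monotone operators on $L^2([0,T];L^2)$ (which in the paper requires identifying $\cA_T$ as the subdifferential of an integral functional via \cite[Theorem~16.50]{Bauschke}), you reduce directly to the closedness of the graph of $\phi_2$ in $\R\times\R$ by extracting an almost-everywhere convergent subsequence. This is entirely valid and arguably more transparent for this specific operator, since the pointwise structure of $\phi_2$ is explicit. The paper's approach, by contrast, is shorter and applies verbatim to any $\cA_T$ arising from a maximal monotone $\phi$ without revisiting pointwise details, which fits its overall strategy of treating $\cA_T$ abstractly in the identification-of-the-limit argument.
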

\begin{proof}
  The first statement is true for any maximal monotone operator by
  \cite[Proposition 2.1]{Barbu}. The measurability then follows by definition
  of the Borel $\sigma$-algebra.
\end{proof}

We define two kinds of Sobolev spaces that we are going to
use.
\begin{Def}\label{2Def:general-Sobolev}
  Let $V\subset H \subset V'$ a Gelfand triple and $T>0$. We define
  \begin{align*}
    W^{1,2}([0,T]; V') &:= \{u\in L^2([0,T]; V'): u'\in L^2([0,T]; V')\}\\
    \text{and}\quad W^{1,2}([0,T]; V, H) &:= \{u\in L^2([0,T], V): u'\in
                                           L^2([0,T]; V')\},
  \end{align*}
  where $u'$ is the weak derivative of $u$ as defined \eg in \cite[Definition
  2.5.1]{Neerven}. These spaces are Banach spaces with the norms
  \begin{align*}
    \norm{u}_{W^{1,2}([0,T]; V')}
    &= \left(\norm{u}_{L^2([0,T]; V')}^2 +
      \norm{u'}_{L^2([0,T]; V')}^2\right)^{\frac1{2}}\\
    \text{and}\quad\norm{u}_{W^{1,2}([0,T]; V,H)}&= \left(\norm{u}_{L^2([0,T]; V)}^2 + \norm{u'}_{L^2([0,T]; V')}^2\right)^{\frac1{2}},
  \end{align*}
  respectively. These norms are norm-equivalent to the ones given in
  \cite[Section 2.5.b]{Neerven} and \cite[Proposition 23.23]{ZeidlerIIA},
  respectively, where also the Banach space property is proved.
\end{Def}

We have the following measurability properties.
\begin{lem}\label{2measurability-lem}
  The subset
  \begin{equation}\label{2eq:135}
    M_1 := \left\{
      \begin{aligned}
        &\qquad \qquad \qquad \quad (u,z) \in L^2([0,T]; L^2) \times L^2([0,T];(L^2)'):\\
        &\exists v\in L^2([0,T]; L^2) \text{ such that } z= \Delta v\ \d
        t\text{-almost everywhere and } (u,v)\in\cA_T
      \end{aligned}
    \right\}
  \end{equation}
  is Borel-measurable. The map $\partial_t: W^{1,2}([0,T]; (L^2)') \to L^2([0,T]; (L^2)')$ is
  continuous and
  \begin{equation}\label{2eq:134}
    M_2:= (\Pi_1, \partial_t(\Pi_2))^{-1} (M_1) \subseteq L^2([0,T]; L^2)\times
    W^{1,2}([0,T]; (L^2)')
  \end{equation}
  is Borel-measurable. The set $M_2$ is also Borel-measurable as a subset
  of $L^2([0,T]; L^2) \times L^2([0,T]; (L^2)')$. Finally, let
  $I_{xw}: L^2([0,T]; L^2) \times \cC([0,T]; \Hd) \hookrightarrow \left(
    L^2([0,T]; (L^2)')\right)^2$ be the canonical continuous
  embedding. Then, the subset
  \begin{displaymath}
    M_3:= \left\{\left(\Pi_1, \Pi_1(I_{xw}) - \Pi_2(I_{xw})\right) - \mu t \in M_2\right\} \subseteq L^2([0,T]; L^2)\times \cC([0,T]; \Hd)
  \end{displaymath}
  is Borel-measurable, where we write $\mu t \in L^2([0,T]; (L^2)')$ for
  the canonically embedded $L^2([0,T]; L^2)$ function given by
  \begin{displaymath}
    (t,x) \mapsto \mu t.
  \end{displaymath}
\end{lem}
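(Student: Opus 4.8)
The plan is to realize each of $M_1,M_2,M_3$ either as a continuous preimage of a set already known to be Borel, or as a continuous \emph{injective} image of such a set; the only nontrivial ingredient is the classical Lusin--Souslin theorem (an injective Borel image of a Borel set between Polish spaces is Borel). All spaces occurring here --- $L^2([0,T];L^2)$, $L^2([0,T];(L^2)')$, $W^{1,2}([0,T];(L^2)')$ and $\cC([0,T];\Hd)$ --- are separable Banach spaces and hence Polish, so this theorem and the usual preimage arguments apply throughout.

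First I would treat $M_1$. Viewing the Dirichlet Laplacian as a bounded linear operator $\Delta\colon L^2\to (L^2)'$ in the Gelfand triple $L^2\hookrightarrow\Hd\hookrightarrow (L^2)'$ and applying it pointwise in time gives a bounded, injective linear map $\Delta\colon L^2([0,T];L^2)\to L^2([0,T];(L^2)')$ (injectivity being immediate, since $\Delta v=0$ forces $v=0$). Then the map $\Phi(u,v):=(u,\Delta v)$ is a continuous injection from $L^2([0,T];L^2)\times L^2([0,T];L^2)$ into $L^2([0,T];L^2)\times L^2([0,T];(L^2)')$, and by the definition of $M_1$ one has exactly $M_1=\Phi(\cA_T)$. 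Since $\cA_T$ is closed by Lemma \ref{2closed-graph-vorne}, hence Borel, the Lusin--Souslin theorem shows that $M_1$ is Borel.

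Next, the continuity of $\partial_t$ is immediate from Definition \ref{2Def:general-Sobolev}: the estimate $\norm{\partial_t u}_{L^2([0,T];(L^2)')}\le \norm{u}_{W^{1,2}([0,T];(L^2)')}$ exhibits $\partial_t$ as a bounded linear operator. Therefore the map $(\Pi_1,\partial_t\circ\Pi_2)\colon (u,w)\mapsto (u,\partial_t w)$ from $L^2([0,T];L^2)\times W^{1,2}([0,T];(L^2)')$ into $L^2([0,T];L^2)\times L^2([0,T];(L^2)')$ is continuous, and $M_2$, being its preimage of the Borel set $M_1$, is Borel. The main obstacle, as I see it, is the subsequent claim that $M_2$ is \emph{also} Borel inside the coarser space $L^2([0,T];L^2)\times L^2([0,T];(L^2)')$: here the $W^{1,2}$-topology is strictly finer, so this cannot be obtained by a preimage. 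Instead I would use that the inclusion $\iota\colon W^{1,2}([0,T];(L^2)')\hookrightarrow L^2([0,T];(L^2)')$ is continuous and injective, whence $\mathrm{id}\times\iota$ is a continuous injection of Polish spaces; the Lusin--Souslin theorem then yields that the image of the Borel set $M_2$ --- which is precisely $M_2$ regarded as a subset of $L^2([0,T];L^2)\times L^2([0,T];(L^2)')$ --- is Borel.

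Finally, for $M_3$ I would write $\iota_1\colon L^2([0,T];L^2)\hookrightarrow L^2([0,T];(L^2)')$ and $\iota_2\colon \cC([0,T];\Hd)\hookrightarrow L^2([0,T];(L^2)')$ for the two components of $I_{xw}$, so that the map $\Psi(x,w):=\bigl(x,\ \iota_1(x)-\iota_2(w)-\mu t\bigr)$ is continuous (affine with continuous linear part) from $L^2([0,T];L^2)\times\cC([0,T];\Hd)$ into $L^2([0,T];L^2)\times L^2([0,T];(L^2)')$. Since $M_3=\Psi^{-1}(M_2)$ and $M_2$ is Borel in the target space by the previous step, $M_3$ is Borel. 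Apart from the finer-to-coarser transfer in the $M_2$ step, every passage reduces to continuity of an explicit (mostly linear) map together with the measurable-image theorem, so no further difficulty is expected.
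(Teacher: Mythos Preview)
Your proposal is correct and follows essentially the same route as the paper: both realize $M_1$ as the image of the closed set $\cA_T$ under the continuous injection $(u,v)\mapsto(u,\Delta v)$, obtain $M_2$ as a continuous preimage, transfer $M_2$ to the coarser ambient space via the injective embedding $W^{1,2}\hookrightarrow L^2$, and finally get $M_3$ as a continuous preimage. The only cosmetic difference is that the paper invokes the result under the name ``Kuratowski theorem'' (and in the first step observes that $(\Pi_1,\Delta\circ\Pi_2)$ is even an isometry, which would make Lusin--Souslin unnecessary there), whereas you cite Lusin--Souslin; the content is the same.
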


\begin{proof}
  We notice that $M_1$ is the image of the set $\cA_T$, which is
  Borel-measurable by Lemma \ref{2closed-graph-vorne}, under the isometry
  $(\Pi_1, \Delta \circ\Pi_2)$, and hence Borel-measurable by the
  Kuratowski theorem (\cf \cite[Theorem 3.9]{Parthasarathy}). The operator
  $\partial_t: W^{1,2}([0,T]; (L^2)') \to L^2([0,T]; (L^2)')$ is linear and
  bounded by the definition of the Sobolev space. Hence it is continuous,
  which implies Borel-measurability. Thus, also $(\Pi_1, \partial_t)$ is
  continuous and Borel-measurable, which yields measurability of $M_2$
  using the measurability of $M_1$. The set $M_2$, viewed as a subset of
  $L^2([0,T]; L^2) \times L^2([0,T]; (L^2)')$, is the image of the
  canonical embedding and thus Borel-measurable by the Kuratowski
  theorem. The Borel-measurability of $M_3$ follows by the continuity of $I_{xw}$.
\end{proof}

The previous lemma alludes that \eqref{2eq:111} and \eqref{2eq:112} are
actually distributional properties, which motivates the following
definition.

\begin{Def}
  We call a probability measure $Q$ on the probability space $L^2([0,T]; L^2)\times
  \cC([0,T]; \Hd)$ endowed with its Borel $\sigma$-algebra a
  pre-solution to \eqref{2eq:43}, if
  \begin{align}
    Q(M_3) = 1,
  \end{align}
  where $M_3$ is defined as in Lemma \ref{2measurability-lem}.
\end{Def}

\begin{lem}\label{2weak-implies-pre}
  The joint law of the processes $(X, W)$ of each weak solution to
  \eqref{2eq:43} in the sense of Definition \ref{2Def-weak-soln} is a
  pre-solution.
\end{lem}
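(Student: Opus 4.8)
The plan is to fix an arbitrary weak solution $\big((\tilde\Om, \tilde\cF, (\tilde\cF_t), \tilde\P), \tilde X, \tilde W\big)$ in the sense of Definition \ref{2Def-weak-soln}, with associated process $\tilde Y$, and to show that the realization $(\tilde X(\omega), \tilde W(\omega))$ belongs to $M_3$ for $\tilde\P$-almost every $\omega$. Since $M_3$ is Borel-measurable by Lemma \ref{2measurability-lem}, this yields $Q(M_3)=1$ for the joint law $Q = \cL\big((\tilde X, \tilde W)\big)$, which is precisely the assertion.

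First I would unravel the definition of $M_3$. Tracing through the embedding $I_{xw}$ and the projections, a pair $(x,w)\in L^2([0,T];L^2)\times\cC([0,T];\Hd)$ lies in $M_3$ exactly when $x - w - \mu t \in W^{1,2}([0,T];(L^2)')$ and $\big(x,\, \partial_t(x-w-\mu t)\big)\in M_1$; by the definition \eqref{2eq:135} of $M_1$ together with \eqref{2eq:59}, the latter means that there exists some $v\in L^2([0,T];L^2)$ with $v\in\phi_2(x)$ almost everywhere (that is, $(x,v)\in\cA_T$) and $\partial_t(x-w-\mu t)=\Delta v$.

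Next I would read off the witness $v$ directly from the solution. Rearranging \eqref{2eq:111} gives, $\tilde\P$-almost surely as an identity in $L^2([0,T];(L^2)')$,
\[
  \tilde X - \tilde W - \mu t = x_0 + \int_0^{\cdot} \Delta\tilde Y(r)\,\d r .
\]
Since $\tilde Y\in L^2(\tilde\Om\times[0,T];L^2)$ and $\Delta$ maps $L^2$ into $(L^2)'$ (the identification implicit in the isometry $(\Pi_1,\Delta\circ\Pi_2)$ used in Lemma \ref{2measurability-lem}), we have $\Delta\tilde Y\in L^2([0,T];(L^2)')$ for $\tilde\P$-almost every $\omega$, so the right-hand side is the primitive of an $L^2([0,T];(L^2)')$-function and therefore lies in $W^{1,2}([0,T];(L^2)')$ with weak time-derivative $\Delta\tilde Y$. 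Taking $v:=\tilde Y(\omega)$, condition \eqref{2eq:112} gives $v\in\phi_2(\tilde X(\omega))$ almost everywhere, hence $(\tilde X(\omega),v)\in\cA_T$, while the displayed identity gives $\partial_t(\tilde X-\tilde W-\mu t)=\Delta v$. Combining these, $\big(\tilde X(\omega),\,\partial_t(\tilde X(\omega)-\tilde W(\omega)-\mu t)\big)\in M_1$, and thus $(\tilde X(\omega),\tilde W(\omega))\in M_3$.

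The only real care is the bookkeeping of \emph{almost surely} against \emph{almost everywhere}. Equation \eqref{2eq:111} holds in $L^2(\tilde\Om\times[0,T];(L^2)')$ and \eqref{2eq:112} holds $(\d t\otimes\d x)$-a.e.\ $\tilde\P$-a.s., while $\tilde X\in L^2(\tilde\Om\times[0,T];L^2)$ and $\tilde W$ has paths in $\cC([0,T];\Hd)$; I would therefore first fix a single $\tilde\P$-null set outside of which all of these pathwise properties hold simultaneously, and run the argument of the previous paragraph on its complement. The verification that a primitive of an $L^2$-in-time function belongs to the Bochner--Sobolev space $W^{1,2}([0,T];(L^2)')$ with the expected weak derivative is the standard characterization of such spaces, so no genuine obstacle arises there; the substance of the proof is purely the algebraic rearrangement of \eqref{2eq:111} exhibiting $\tilde Y$ as the required witness.
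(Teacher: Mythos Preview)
Your proposal is correct and follows essentially the same route as the paper: rearrange \eqref{2eq:111} to identify $\partial_t(\tilde X-\tilde W-\mu t)=\Delta\tilde Y$ almost surely (the paper cites \cite[Proposition 2.5.9]{Neerven} for the Bochner--Sobolev characterization you invoke), use \eqref{2eq:112} to place $(\tilde X,\tilde Y)$ in $\cA_T$, and conclude $(\tilde X,\tilde W)\in M_3$ almost surely. Your more explicit handling of the almost-sure versus almost-everywhere bookkeeping is a welcome elaboration of what the paper leaves implicit.
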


\begin{proof}
  Let $\left( (\Om, \cF, (\cF_t)_{t\in[0,T]}, \P), X,  W\right)$ be a weak solution to \eqref{2eq:43} and
  $Y\in L^2(\tilde\Om\times[0,T]; L^2)$ the corresponding drift process as
  in Definition \ref{2Def-weak-soln}. Then, \cite[Proposition 2.5.9]{Neerven},
  \eqref{2eq:111} and \eqref{2eq:112} yield
  \begin{displaymath}
    \partial_t \left(\Pi_1(I_{xw}(X, W)) - \Pi_2(I_{xw}(X,W)) - \mu t\right) =
    \Delta Y\quad\P\text{-almost surely}
  \end{displaymath}
  with $(X,Y) \in \cA_T$.
  Hence, using the notation from Lemma
  \ref{2measurability-lem}, we have
  \begin{displaymath}
    \left(X, \partial_t(\Pi_1(I_{xw}(X, W)) -
        \Pi_2(I_{xw}(X,W)) - \mu t)\right) \in M_1 \quad \P\text{-almost surely},
  \end{displaymath}
  which by construction is equivalent to $(X,W)\in M_3$ $\P$-almost
  surely. This finishes the proof.
\end{proof}

We cite the concept of pointwise uniqueness from \cite[Definition 1.4]{KurtzII}.
\begin{Def}
  Pointwise uniqueness holds for pre-solutions, if and only if for any processes
  $(X_1, X_2, W)$ defined on the same probability space with
  $\cL((X_1, W))$ and $\cL((X_2, W))$ being pre-solutions, $X_1 = X_2$
  almost surely.
\end{Def}

\begin{lem}
  Pointwise uniqueness holds for pre-solutions to \eqref{2eq:43}.
\end{lem}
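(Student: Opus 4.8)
The plan is to exploit that the two pre-solutions are driven by the \emph{same} Wiener process, so that the noise cancels in the difference and the problem reduces to a purely pathwise $\Hd$-contraction estimate for the monotone operator $-\Delta\phi_2$. First I would unwind the definition of a pre-solution. Let $(X_1, X_2, W)$ be defined on one probability space with $\cL((X_1, W))$ and $\cL((X_2, W))$ both pre-solutions, \ie $(X_i, W)\in M_3$ $\P$-almost surely. Recalling Lemma \ref{2measurability-lem} and \eqref{2eq:111}, this means that, outside a $\P$-null set, for each $i\in\{1,2\}$ there is a witness $Y_i = Y_i(\omega)\in L^2([0,T]; L^2)$ with $(X_i, Y_i)\in\cA_T$ (\ie $Y_i\in\phi_2(X_i)$ $(\d t\otimes\d x)$-almost everywhere) and
\[
  X_i(t) = x_0 + \mu t + \int_0^t \Delta Y_i(r)\,\d r + W(t)
\]
as a continuous $(L^2)'$-valued curve. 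Since both pre-solutions carry the same initial datum $x_0$, the same drift $\mu t$ and the same $W$, the difference $R := X_1 - X_2$ satisfies the noise-free identity $R(t) = \int_0^t \Delta(Y_1 - Y_2)(r)\,\d r$ with $R(0)=0$. The crucial consequence is that the entire remaining argument can be carried out for each fixed $\omega$ in a set of full measure; in particular, no joint measurability or adaptedness of the selections $Y_i$ is required.

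Next I would run the energy estimate in the Gelfand triple $L^2\hookrightarrow\Hd\hookrightarrow(L^2)'$, with $\Hd$ as the pivot space. For a fixed admissible $\omega$ we have $R\in L^2([0,T]; L^2)$ and, using the pairing $\sp{u}{v}_{(L^2)'\times L^2} = \sp{-\Delta^{-1}u}{v}_{L^2}$, also $\partial_t R = \Delta(Y_1 - Y_2)\in L^2([0,T]; (L^2)')$ with $\norm{\partial_t R}_{(L^2)'} = \norm{Y_1 - Y_2}_{L^2}$. The chain rule for Gelfand triples (the deterministic specialization of the Itô formula in \cite[Theorem 4.2.5]{Roeckner}, or \cite[Theorem 3.1]{Lions-Magenes}) then yields that $t\mapsto\norm{R(t)}_\Hd^2$ is absolutely continuous with
\[
  \frac{1}{2}\frac{\d}{\d t}\norm{R(t)}_\Hd^2 = \sp{\partial_t R(t)}{R(t)}_{(L^2)'\times L^2} = -\sp{Y_1(t) - Y_2(t)}{X_1(t) - X_2(t)}_{L^2}.
\]
Because $Y_i\in\phi_2(X_i)$ pointwise and the graph $\phi_2$ is monotone, the integrand on the right is nonnegative, so the derivative is $\le 0$. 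Integrating from $R(0)=0$ gives $\norm{R(t)}_\Hd = 0$ for all $t\in[0,T]$, and since $L^2\hookrightarrow\Hd$ is injective and $X_1, X_2\in L^2([0,T]; L^2)$, I conclude $X_1 = X_2$ in $L^2([0,T]; L^2)$ for almost every $\omega$, \ie $\P$-almost surely, which is pointwise uniqueness.

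I expect the main obstacle to be the correct functional-analytic bookkeeping rather than any deep estimate: one must verify that $\Delta(Y_1 - Y_2)$ genuinely defines an element of $L^2([0,T]; (L^2)')$ for the twisted duality used throughout, and that the Gelfand-triple chain rule applies with $\Hd$ as pivot, so that the $H^{-1}$-inner-product identity above is rigorous. A secondary point is ensuring $R(0)=0$, which rests on both pre-solutions sharing the fixed initial value $x_0$ appearing in \eqref{2eq:111}. It is worth emphasizing that, in contrast to the identification step for the Zhang model, this uniqueness argument uses only the monotonicity of $\phi_2$ and makes no use of reflexivity of any energy space, precisely because the cancellation of the noise renders the estimate entirely pathwise.
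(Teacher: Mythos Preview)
Your proposal is correct and follows essentially the same route as the paper: cancel the noise by taking the difference, work pathwise on a full-measure set, and run the $\Hd$-energy identity via the Gelfand-triple chain rule (the paper cites \cite[Proposition 23.23]{ZeidlerIIA}) together with the monotonicity of $\phi_2$ to conclude $\norm{X_1(t)-X_2(t)}_\Hd^2\le 0$. The only cosmetic difference is that the paper explicitly constructs $Y^i = \Delta^{-1}\partial_t(X^i - W - \mu t)$ from the definition of $M_3$, whereas you invoke the existence of witnesses $Y_i$ directly; both are fine.
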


\begin{proof}
  Let $(X^1, X^2, W)$ be defined on a probability space $(\Om, \cF, \P)$,
  such that $\cL((X^1, W))$ and $\cL((X^2, W))$ are two
  pre-solutions to \eqref{2eq:43}. Let $M_3$ be defined as in Lemma
  \ref{2measurability-lem}, and let
  \begin{displaymath}
    \tilde M_3 := (X^1, W)^{-1}(M_3) \cap (X^2, W)^{-1}(M_3),
  \end{displaymath}
  which implies that $\P(\tilde M_3) = 1$ by construction. From now on, we
  conduct all arguments pointwise for $\omega \in \tilde M_3$. We define
  $Y^i \in L^2([0,T]; L^2)$ for $i= 1,2$ by
  \begin{displaymath}
    Y^i = \Delta^{-1}(\partial_t(X^i - W - \mu t)),
  \end{displaymath}
  which is well-defined due to the construction of
  $M_3$. Moreover, it
  follows that \eqref{2eq:111} and \eqref{2eq:112} are satisfied for
  $(X^i, Y^i, W)$ for $i=1,2$, which implies
  \begin{equation}\label{2eq:136}
    X^1(t) - X^2(t) = \int_0^t\Delta(Y^1(r) - Y^2(r))\d r\quad \text{in }L^2([0,T]; (L^2)').
  \end{equation}
  By \cite[Proposition 2.5.9]{Neerven}, \eqref{2eq:136} implies that
  $X^1 - X^2$ is weakly differentiable with
  $(X^1 - X^2)' = \Delta (Y^1 - Y^2)$. Since $X^i, Y^i \in L^2([0,T]; L^2)$
  for $i= 1,2$ by construction, $X^1- X^2 \in W^{1,2}([0,T]; L^2,
  \Hd)$. \cite[Proposition 23.23]{ZeidlerIIA} then yields that there exists
  a continuous $\Hd$-valued $\d t$-version $Z$ of $X^1 - X^2$, for which we
  have
  \begin{align*}
    \norm{Z(t)}_\Hd^2
    &= \int_0^t\sp{\Delta(Y^1(r) - Y^2(r))}{X^1(r) -
      X^2(r)}_{(L^2)'\times L^2}\d r\\
    &= -\int_0^t\sp{Y^1(r) - Y^2(r)}{X^1(r) - X^2(r)}_{L^2}\d r \leq 0,
  \end{align*}
  where the last step follows from \eqref{2eq:112}. This implies that $Z$
  and consequently $X^1- X^2$ is zero $\d t$-almost everywhere. Since this
  is true for every $\omega\in \tilde M_3$, $X^1= X^2$ $\P$-almost surely,
  as required.
\end{proof}

\begin{Cor}\label{2uniqueness-pre}
  There exists at most one pre-solution to \eqref{2eq:43}.
\end{Cor}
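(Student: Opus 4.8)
The plan is to deduce the corollary from the pointwise uniqueness established in the preceding lemma via the standard Yamada--Watanabe mechanism, in the purely distributional form adapted to pre-solutions by Kurtz \cite{KurtzII}. The guiding principle is that two pre-solutions, once realised over one \emph{common} copy of the driving noise, must have identical $X$-components by pointwise uniqueness, hence identical joint laws.

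First I would fix two pre-solutions $Q_1,Q_2$ on $L^2([0,T];L^2)\times\cC([0,T];\Hd)$. By the specification of \eqref{2eq:43}, in which $W$ is a cylindrical $\mathrm{Id}$-Wiener process, the second marginal of any pre-solution to \eqref{2eq:43} is the fixed law $\nu$ of the associated Brownian sheet on $\cC([0,T];\Hd)$; in particular $Q_1$ and $Q_2$ share this marginal. Both underlying spaces being Polish, I would disintegrate each $Q_i$ over $\nu$, obtaining Borel transition kernels $\kappa_i$ with $Q_i(\d x,\d w)=\kappa_i(w,\d x)\,\nu(\d w)$.

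Next I would glue these over the shared noise: on $\Om := L^2([0,T];L^2)\times L^2([0,T];L^2)\times\cC([0,T];\Hd)$ equipped with the coupling $\P(\d x_1,\d x_2,\d w):=\kappa_1(w,\d x_1)\,\kappa_2(w,\d x_2)\,\nu(\d w)$, the coordinate processes $X^1,X^2,W$ satisfy $\cL((X^i,W))=Q_i$ for $i=1,2$, so that $\cL((X^1,W))$ and $\cL((X^2,W))$ are pre-solutions sharing the single process $W$. The pointwise uniqueness established above then forces $X^1=X^2$ $\P$-almost surely, whence $Q_1=\cL((X^1,W))=\cL((X^2,W))=Q_2$. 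As $Q_1,Q_2$ were arbitrary, at most one pre-solution exists.

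The main obstacle is measure-theoretic rather than analytic: justifying the disintegration and the gluing, that is, producing the regular conditional distributions $\kappa_i$ and checking that the coupled coordinate processes reproduce $Q_1$ and $Q_2$ while genuinely sharing $W$. Crucially, this is available here because the pre-solution property is encoded by the single Borel full-measure constraint $\{Q(M_3)=1\}$ with no adaptedness requirement, so the pathwise estimate underlying pointwise uniqueness applies verbatim to each glued pair and no compatibility of filtrations has to be verified. Alternatively, I would bypass the explicit construction and invoke \cite{KurtzII} directly: pointwise uniqueness for the constraint $M_3$ implies joint uniqueness in law, which is precisely the assertion of the corollary.
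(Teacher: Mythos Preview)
Your proposal is correct. The closing alternative you offer—invoking \cite{KurtzII} directly—is exactly the paper's proof, which consists of the single sentence ``This is part of the statement of \cite[Theorem 1.5]{KurtzII}.'' The explicit disintegration-and-gluing argument you give first is a correct unpacking of the Yamada--Watanabe mechanism behind that citation, and it goes through here precisely because, as you note, the pointwise-uniqueness lemma is purely pathwise (no adaptedness or compatibility condition enters), so the coupled pair $(X^1,W),(X^2,W)$ on the product space falls directly under it.

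One small remark: the paper's formal definition of pre-solution requires only $Q(M_3)=1$ and does not by itself force the $W$-marginal to be the Wiener law; your assertion that it is fixed is the intended reading—and is equally presupposed by the paper's own invocation of Kurtz, whose framework takes the input marginal $\nu$ as given data—but strictly speaking it is an implicit convention here rather than a consequence of the definition as written.
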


\begin{proof}
  This is part of the statement of \cite[Theorem 1.5]{KurtzII}.
\end{proof}

\begin{proof}[Proof of Theorem \ref{2uniqueness-thm}]
  The claim is a direct consequence of Lemma \ref{2weak-implies-pre} and
  Corollary \ref{2uniqueness-pre}.
\end{proof}

\section{Properties of discrete spaces and prolongations}
We state some properties on the discrete spaces used above and their
interplay to the corresponding function spaces by via prolongations. For
the sake of brevity, we omit details which can be considered classical.

\begin{lem}\label{2lem-spectralnorm}
  Let $\Delta_h\in \R^{(Z-1)\times(Z-1)}$ be defined as in \eqref{2eq:Deltah}. Then, $-\Delta_h$
  is positive definite and
  \begin{displaymath}
    \norm{-\Delta_h} \leq \frac{4}{h^2}.
  \end{displaymath}
\end{lem}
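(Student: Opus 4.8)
The plan is to read both claims off the quadratic form $\sp{-\Delta_h u}{u}$ associated with the symmetric matrix $-\Delta_h$, rather than computing its spectrum explicitly. Fix $u = (u_1,\dots,u_{Z-1})\in\R^{Z-1}$ and adopt the Dirichlet convention $u_0 = u_Z = 0$. Reading off the rows of \eqref{2eq:Deltah}, the $i$-th component of $-\Delta_h u$ is $\frac1{h^2}(2u_i - u_{i-1} - u_{i+1})$, so that
\begin{displaymath}
  \sp{-\Delta_h u}{u} = \frac1{h^2}\sum_{i=1}^{Z-1}\left(2u_i - u_{i-1} - u_{i+1}\right)u_i.
\end{displaymath}
The first and central step is a discrete summation by parts (Abel summation): regrouping this sum in terms of the increments $u_{i+1}-u_i$, and using that the boundary terms drop out because $u_0 = u_Z = 0$, one obtains the identity
\begin{displaymath}
  \sp{-\Delta_h u}{u} = \frac1{h^2}\sum_{i=0}^{Z-1}\left(u_{i+1}-u_i\right)^2.
\end{displaymath}
This identity does all the work, and the only point requiring genuine attention is the careful bookkeeping of the boundary terms when rearranging.

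From this representation, positive definiteness is immediate: the right-hand side is a sum of squares, hence nonnegative, and it vanishes only if $u_{i+1}=u_i$ for every $i\in\{0,\dots,Z-1\}$; combined with $u_0=0$ this forces $u\equiv 0$. Therefore $\sp{-\Delta_h u}{u}>0$ whenever $u\neq 0$, i.e.\ $-\Delta_h$ is positive definite.

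For the norm bound I would estimate the same quadratic form from above via the elementary inequality $(a-b)^2\leq 2a^2+2b^2$, which gives
\begin{displaymath}
  \sp{-\Delta_h u}{u}\leq \frac{2}{h^2}\sum_{i=0}^{Z-1}\left(u_{i+1}^2+u_i^2\right) = \frac{4}{h^2}\norm{u}^2,
\end{displaymath}
where the last equality uses the reindexings $\sum_{i=0}^{Z-1}u_{i+1}^2=\sum_{i=1}^{Z-1}u_i^2=\norm{u}^2$ and $\sum_{i=0}^{Z-1}u_i^2=\norm{u}^2$, both thanks to $u_0=u_Z=0$. Finally, since $-\Delta_h$ is symmetric and positive definite, its induced Euclidean norm equals its largest eigenvalue, equivalently the supremum of the Rayleigh quotient $\sp{-\Delta_h u}{u}/\norm{u}^2$ over $u\neq 0$; the displayed bound then yields $\norm{-\Delta_h}\leq\frac{4}{h^2}$. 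There is no real obstacle here, as the statement is elementary; the whole argument rests on the summation-by-parts identity and the accompanying index bookkeeping.
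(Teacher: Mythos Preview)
Your proof is correct, but it takes a genuinely different route from the paper's. The paper simply quotes the explicit eigenvalues $\lambda_j = \tfrac{2}{h^2}(1-\cos(j\pi h))$ of $-\Delta_h$ from a numerical analysis textbook, observes that they lie in $\left(0,\tfrac{4}{h^2}\right)$, and concludes both claims at once (positive definiteness from $\lambda_j>0$, the norm bound from the spectral norm of a symmetric matrix equalling its largest eigenvalue). Your argument instead works directly with the quadratic form: the summation-by-parts identity $\sp{-\Delta_h u}{u} = \tfrac{1}{h^2}\sum_{i=0}^{Z-1}(u_{i+1}-u_i)^2$ gives positive definiteness as a sum of squares, and the elementary bound $(a-b)^2\le 2a^2+2b^2$ then yields the Rayleigh-quotient estimate. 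Your approach is fully self-contained and avoids any external reference or knowledge of the spectrum; the paper's approach is shorter to write but relies on the reader accepting the eigenvalue formula. Both are entirely standard; yours also has the mild advantage that the same identity $\sp{-\Delta_h u}{u}_0 = \norm{u}_1^2$ reappears elsewhere in the paper (cf.\ Lemma~\ref{2prop-gridfns}), so deriving it here is not wasted effort.
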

\begin{proof}
  From \cite[Equation (2.23)]{Leveque}, we obtain that the eigenvalues of
  $-\Delta_h$ are
  \begin{displaymath}
    \lambda_j = \frac{2}{h^2}(1 - \cos(j\pi h)) \in \left(0, \frac{4}{h^2}\right), \quad j=1,\dots, Z-1,
  \end{displaymath}
  which implies that $-\Delta_h$ is positive definite.
  Equation (2.77) in \cite{Schwarz} then yields the second claim. 
\end{proof}

\begin{Cor}
  For $u\in\R^{Z-1}$, Lemma \ref{2lem-spectralnorm} yields
  \begin{align}\label{2eq:normest}
    \begin{split}
      \norm{\Delta_h u}_{-1}^2 &= \abs{\sp{-\Delta_h u}{u}_0} = h
      \abs{\sp{-\Delta_h u}{u}}\\
      &\leq h \norm{-\Delta_h u}\norm{u} \leq
      \norm{- \Delta_h} h\norm{u}^2 \leq \frac{4}{h^2} \norm{u}_0^2.
    \end{split}
  \end{align}
\end{Cor}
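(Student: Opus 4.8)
The plan is to collapse the discrete $\norm{\cdot}_{-1}$-norm of $\Delta_h u$ into an ordinary $\sp{\cdot}{\cdot}_0$-pairing and then to estimate by Cauchy--Schwarz together with the spectral bound already furnished by Lemma \ref{2lem-spectralnorm}. The only genuine step is the opening identity: the factor $\Delta_h$ sitting inside the norm must be seen to cancel against the inverse Laplacian built into the $\sp{\cdot}{\cdot}_{-1}$-product. Concretely, by Definition \ref{2def-discr-norms} and the identity $(-\Delta_h)^{-1}\Delta_h u = -u$,
\begin{displaymath}
  \norm{\Delta_h u}_{-1}^2 = \sp{(-\Delta_h)^{-1}\Delta_h u}{\Delta_h u}_0 = \sp{-u}{\Delta_h u}_0 = \sp{-\Delta_h u}{u}_0,
\end{displaymath}
where the last equality uses the symmetry of both $\sp{\cdot}{\cdot}_0$ and $\Delta_h$. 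Since $-\Delta_h$ is positive definite by Lemma \ref{2lem-spectralnorm}, this quantity is nonnegative and hence equals $\abs{\sp{-\Delta_h u}{u}_0}$, which is the first equality in \eqref{2eq:normest}.

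From there I would simply unfold $\sp{\cdot}{\cdot}_0 = h\sp{\cdot}{\cdot}$, again by Definition \ref{2def-discr-norms}, producing the factor $h$ in the second equality. The Cauchy--Schwarz inequality for $\sp{\cdot}{\cdot}$ then gives $\abs{\sp{-\Delta_h u}{u}} \leq \norm{-\Delta_h u}\norm{u}$, and submultiplicativity of the induced matrix norm gives $\norm{-\Delta_h u} \leq \norm{-\Delta_h}\norm{u}$. Inserting the bound $\norm{-\Delta_h} \leq 4/h^2$ from Lemma \ref{2lem-spectralnorm} and recalling $h\norm{u}^2 = \norm{u}_0^2$ closes the chain and yields \eqref{2eq:normest}.

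I do not expect any serious obstacle: once the spectral estimate of Lemma \ref{2lem-spectralnorm} is available, everything is a routine combination of Cauchy--Schwarz and operator-norm submultiplicativity. The one place demanding a little care is the opening identity, where one must correctly track the cancellation $(-\Delta_h)^{-1}\Delta_h u = -u$ together with the accompanying sign and then invoke positive definiteness to pass to the absolute value; this is precisely what turns the $\Hd$-type norm into the elementary $L^2$-type pairing on which the subsequent estimates act.
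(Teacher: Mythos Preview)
Your proposal is correct and follows exactly the paper's approach: the Corollary has no separate proof in the paper because the displayed chain \eqref{2eq:normest} \emph{is} the proof, and you have simply (and accurately) unpacked each step---the cancellation $(-\Delta_h)^{-1}\Delta_h u=-u$ together with positive definiteness for the opening identity, the definition $\sp{\cdot}{\cdot}_0=h\sp{\cdot}{\cdot}$, Cauchy--Schwarz, operator-norm submultiplicativity, and the spectral bound from Lemma~\ref{2lem-spectralnorm}.
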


\begin{lem}\label{2trace-inv-laplace}
  Let $h>0$ as in Assumption \ref{2ass-tau-h}, and let $I': L^2 \to \Hd$ be the
  canonical embedding. Then, $I'\in L_2(L^2, \Hd)$ and
  \begin{equation}\label{eq:Tr-inv-laplace}
    \lim_{h\to 0} \Tr(-\Delta_h^{-1})  = \sum_{k=1}^\infty \frac1{\pi^2k^2}
    = \frac1{6} =
    \norm{I'}_{L_2(L^2,\Hd)}^2.
  \end{equation}
\end{lem}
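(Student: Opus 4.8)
The plan is to evaluate both sides explicitly through the spectra of the discrete and continuous Dirichlet Laplacians and then to compare the two resulting series, which will turn out to coincide via the Basel identity. First I would reduce $\Tr(-\Delta_h^{-1})$ to a sum over eigenvalues: since the trace is basis-independent and equals the sum of the eigenvalues of $-\Delta_h^{-1}$, Lemma \ref{2lem-spectralnorm} gives
\begin{equation*}
  \Tr(-\Delta_h^{-1}) = \sum_{k=1}^{Z-1} \frac{1}{\lambda_k}, \qquad \lambda_k = \frac{2}{h^2}\bigl(1 - \cos(k\pi h)\bigr).
\end{equation*}
Using $1 - \cos\theta = 2\sin^2(\theta/2)$ this becomes $\sum_{k=1}^{Z-1} \frac{h^2}{4\sin^2(k\pi h/2)}$. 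For fixed $k$, as $h\to 0$ the ratio $\tfrac{k\pi h/2}{\sin(k\pi h/2)}\to 1$, so each summand converges to $\tfrac{1}{\pi^2 k^2}$.

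The main obstacle is then the interchange of the limit $h\to 0$ with the summation, complicated by the fact that the number of terms $Z-1$ itself grows as $h\to 0$. I would handle this by a dominated-convergence argument with respect to the counting measure: extend the summands to $a_k(h) := \frac{h^2}{4\sin^2(k\pi h/2)}$ for $k \le Z-1$ and $a_k(h):=0$ for $k\ge Z$. Jordan's inequality $\sin\theta \ge \frac{2}{\pi}\theta$ on $[0,\pi/2]$, applied to $\theta = k\pi h/2 \in (0,\pi/2)$ (which is legitimate since $k \le Z-1$ and $h = 1/Z$ force $\theta < \pi/2$), yields $\sin(k\pi h/2) \ge kh$ and hence the $h$-uniform summable bound $0\le a_k(h) \le \frac{1}{4k^2}$. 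Dominated convergence then permits passing the limit inside the sum, giving
\begin{equation*}
  \lim_{h\to 0}\Tr(-\Delta_h^{-1}) = \sum_{k=1}^\infty \frac{1}{\pi^2 k^2} = \frac{1}{6}.
\end{equation*}

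Finally I would compute the Hilbert--Schmidt norm using the orthonormal basis $e_k(x) = \sqrt{2}\sin(k\pi x)$, $k\ge 1$, of $L^2$, which diagonalizes the continuous Dirichlet Laplacian with $-\Delta e_k = \pi^2 k^2 e_k$. Since $\norm{f}_\Hd^2 = \sp{(-\Delta)^{-1}f}{f}_{L^2}$ (the $\Hd$-norm induced by the Riesz map $-\Delta: \Hsob \to \Hd$), one obtains $\norm{I' e_k}_\Hd^2 = \tfrac{1}{\pi^2 k^2}$, so that
\begin{equation*}
  \norm{I'}_{L_2(L^2,\Hd)}^2 = \sum_{k=1}^\infty \norm{I' e_k}_\Hd^2 = \sum_{k=1}^\infty \frac{1}{\pi^2 k^2} = \frac{1}{6};
\end{equation*}
the finiteness of this sum simultaneously establishes $I' \in L_2(L^2,\Hd)$. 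Since both computations produce the same series $\sum_{k\ge 1}\tfrac{1}{\pi^2 k^2}=\tfrac16$, the chain of equalities in \eqref{eq:Tr-inv-laplace} follows.
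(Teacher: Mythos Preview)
Your argument is correct and complete. The paper does not actually supply a proof for this lemma: in the appendix the authors announce that ``for the sake of brevity, we omit details which can be considered classical,'' and Lemma~\ref{2trace-inv-laplace} is stated without proof. Your approach---expanding $\Tr(-\Delta_h^{-1})$ via the explicit eigenvalues from Lemma~\ref{2lem-spectralnorm}, controlling the sum uniformly in $h$ through Jordan's inequality so that dominated convergence applies, and matching the limit against the Hilbert--Schmidt norm computed in the sine basis of $L^2$---is exactly the natural route and is the argument the paper is implicitly relying on.
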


Recall the partitions $(K_i)_{i=0}^Z$ and $(J_i)_{i=0}^{Z-1}$ and the grids
$(x_i)_{i=0}^Z$ and $(y_i)_{i=0}^{Z-1}$ as given in
\eqref{2eq:partition}, and the definition of prolongations of functions on
these grids as given in Definition \ref{2Def:prolong} and Definition
\ref{2Def-prolong-time}. Then, the following statements can be verified by
direct computations.

\begin{lem}\label{2prop-gridfns}
  Let $u = (u_i)_{i=1}^{Z-1} \in \R^{Z-1}$ and
  $v = (v_i)_{i=0}^{Z-1} \in \R^Z$ and recall the convention
  $u_0 = u_Z = 0$. Define the piecewise linear prolongation with zero-Neumann
  boundary conditions with respect to the grid $(y_i)_{i=0,\dots,Z-1}$ by
  \begin{displaymath}
    \Iply: \R^\gridlim \to H^1, v\mapsto v_0\Ind{K_0} + \sum_{i=1}^{\gridlim-1} \left[v_{i-1} + \frac{v_i - v_{i-1}}{h} (\cdot -
      x_i)\right]\Ind{K_i} + v_{\gridlim-1}\Ind{K_\gridlim}.
  \end{displaymath}
  and the piecewise constant prolongation by $\Ipcy: \R^\gridlim \to L^2,
  v\mapsto  \sum_{i=0}^{\gridlim-1} v_i\Ind{J_i}$. Then,
  \allowdisplaybreaks
  \begin{align*}
  &\norm{\Iply v}_{L^2} \leq \norm{\Ipcy
      v}_{L^2} \leq 3 \norm{\Iply v}_{L^2},\\
    &\int_0^1 \Iply v\ \d x = \int_0^1 \Ipcy v\ \d x,\\
      &\Iply v - a = \Iply (v - a) \quad \text{and} \quad \Ipcy
      v -a = \Ipcy (v-a) \quad\text{for all }a\in\R,\\
      &\partial_x \Ipcy v = \sum_{i=1}^{Z-1}\delta_{x_i}(v_i - v_{i-1}),\\
      &\partial_x \left(\Iply \left(\sum_{j=0}^i h u_j\right)_{i=0}^{Z-1}\right) = \Ipcx u,\\
    &\norm{u}_1 = \norm{\Iplx u}_\Hsob,\\
      &\norm{\partial_{xx}(\Iplx u)}_\Hd =
      \norm{\sum_{i=1}^{Z-1}\frac1{h}(-u_{i-1} + 2u_i -
        u_{i+1})\delta_{x_i}}_\Hd.
    \end{align*}
\end{lem}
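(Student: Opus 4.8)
The plan is to verify each of the seven assertions by reducing it to an elementary cell-by-cell computation, exploiting that every prolongation here is explicitly piecewise polynomial. Throughout I use that $\Ipcy v$ is constant equal to $v_i$ on each cell $J_i$ of length $h$; that $\Iply v$ is the continuous piecewise-affine function taking the value $v_i$ at the midpoint $y_i$ and extended by the constants $v_0$, $v_{Z-1}$ on the boundary half-cells $K_0$, $K_Z$ of length $\frac{h}{2}$; and that $\Iplx u$ (recall Definition~\ref{2Def:prolong}) is continuous piecewise-affine with slope $(u_{i+1}-u_i)/h$ on $J_i$ and zero boundary values. I would group the statements into the two norm/integral identities for the $y$-prolongations, the triviality coming from linearity, the three distributional-derivative identities, and the energy identity via summation by parts.

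For the norm/integral group I would compute the relevant quantities as explicit quadratic forms in the entries $v_i$. Using $\int_0^h\big(a+\frac{b-a}{h}s\big)^2\,\d s = \frac{h}{3}(a^2+ab+b^2)$ on each interior cell together with the contributions $\frac{h}{2}v_0^2$, $\frac{h}{2}v_{Z-1}^2$ of the boundary half-cells, one obtains $\norm{\Iply v}_{L^2}^2 = \frac{h}{2}v_0^2 + \frac{h}{2}v_{Z-1}^2 + \frac{h}{3}\sum_{i=1}^{Z-1}(v_{i-1}^2 + v_{i-1}v_i + v_i^2)$, whereas $\norm{\Ipcy v}_{L^2}^2 = h\sum_{i=0}^{Z-1}v_i^2$. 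The two-sided bound then follows termwise from $-\frac12(v_{i-1}^2+v_i^2)\le v_{i-1}v_i\le \frac12(v_{i-1}^2+v_i^2)$: the upper estimate yields $\norm{\Iply v}_{L^2}\le\norm{\Ipcy v}_{L^2}$, and the lower estimate yields $\norm{\Ipcy v}_{L^2}^2\le 6\norm{\Iply v}_{L^2}^2$, hence the asserted $\norm{\Ipcy v}_{L^2}\le 3\norm{\Iply v}_{L^2}$. The integral identity is the trapezoidal rule $\int_{K_i}\Iply v = \frac{h}{2}(v_{i-1}+v_i)$ on interior cells plus $\frac{h}{2}v_0$, $\frac{h}{2}v_{Z-1}$ on the half-cells; a short reindexing shows the total equals $h\sum_i v_i = \int_0^1\Ipcy v$. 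The constant-reproduction identities $\Iply v - a = \Iply(v-a)$ and $\Ipcy v - a = \Ipcy(v-a)$ are immediate from linearity of the prolongations together with $\Iply \mathbf{1}=\Ipcy\mathbf{1}=1$.

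For the distributional-derivative group I would argue locally. Since $\Ipcy v$ has a jump of height $v_i - v_{i-1}$ across the interior node $x_i$, its distributional derivative is $\sum_{i=1}^{Z-1}(v_i-v_{i-1})\delta_{x_i}$, which is the fourth identity. For the fifth, writing $w_i = \sum_{j=0}^{i}h u_j$ gives $w_i - w_{i-1} = h u_i$, so $\Iply w$ has slope exactly $u_i$ on each interior cell $K_i$ and slope $0$ on $K_0$, $K_Z$; being continuous it carries no singular part, whence $\partial_x\Iply w = \sum_{i=1}^{Z-1}u_i\Ind{K_i} = \Ipcx u$. For the seventh, $\partial_x\Iplx u$ is piecewise constant equal to $(u_{i+1}-u_i)/h$ on $J_i$, so differentiating once more produces $\partial_{xx}\Iplx u = \sum_{i=1}^{Z-1}\frac{1}{h}(u_{i-1}-2u_i+u_{i+1})\delta_{x_i}$, whose $\Hd$-norm equals that of the claimed expression, the overall sign being irrelevant.

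Finally, the energy identity is discrete summation by parts (recall Definition~\ref{2def-discr-norms}): from $\partial_x\Iplx u = (u_{i+1}-u_i)/h$ on $J_i$ one gets $\norm{\Iplx u}_\Hsob^2 = \norm{\partial_x\Iplx u}_{L^2}^2 = \frac{1}{h}\sum_{i=0}^{Z-1}(u_{i+1}-u_i)^2$, while $\norm{u}_1^2 = \sp{-\Delta_h u}{u}_0 = h\sp{-\Delta_h u}{u} = \frac{1}{h}\sum_{i=1}^{Z-1}(2u_i-u_{i-1}-u_{i+1})u_i$, and Abel summation with $u_0=u_Z=0$ turns the latter sum into $\frac{1}{h}\sum_{i=0}^{Z-1}(u_{i+1}-u_i)^2$. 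I expect the only genuinely delicate points to be the exact bookkeeping of the two boundary half-cells $K_0$, $K_Z$ in the quadratic forms behind the norm comparison and the integral identity, and the observation in the fifth identity that continuity of $\Iply w$ is precisely what rules out spurious boundary deltas; the remaining steps are routine.
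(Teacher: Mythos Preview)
Your proposal is correct and is exactly the kind of direct cell-by-cell verification the paper has in mind; the paper itself offers no argument beyond the sentence ``the following statements can be verified by direct computations,'' and your outline supplies precisely those computations (norm comparison via the explicit quadratic form, trapezoidal-rule integral identity, jump calculus for the distributional derivatives, and Abel summation for the energy identity). One small remark: your termwise lower bound actually gives the sharper $\norm{\Ipcy v}_{L^2}^2 \le 3\norm{\Iply v}_{L^2}^2$ once the boundary half-cell contributions are kept rather than dropped, but your stated constant $6$ already suffices for the claimed factor $3$.
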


\begin{lem}\label{2est-H-1}
  Let $u = (u_i)_{i=1}^{Z-1} \in \R^{Z-1}$, where $Z$ is defined as in
  \eqref{2eq:partition}. Then
  \begin{displaymath}
    \norm{\Ipcx u}_\Hd \leq \norm{u}_{-1} \leq 3 \norm{\Ipcx u}_\Hd.
  \end{displaymath}
\end{lem}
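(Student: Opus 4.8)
The plan is to express \emph{both} norms as the distance from one and the same continuous piecewise-linear function to the constants, measured once in the $L^2$-norm and once in a discrete $\ell^2$-norm, and then to compare these two distances using the elementary equivalence of $\Iply$ and $\Ipcy$ recorded in Lemma \ref{2prop-gridfns}. Throughout, set $F_i := \sum_{j=0}^{i} h u_j$ for $i=0,\dots,Z-1$ (so $F_0=0$, using $u_0=0$), write $\vec F := (F_i)_{i=0}^{Z-1}\in\R^Z$ for this vector of partial sums, and let $F := \Iply \vec F$. In one dimension with zero Dirichlet conditions one has, for any $f\in L^2$ with an arbitrary primitive $G$, the characterization $\norm{f}_\Hd = \min_{c\in\R}\norm{G-c}_{L^2}$: this follows from $\int_0^1 fv = -\int_0^1(G-c)v'\,\d x$ for $v\in\Hsob$ (boundary terms and $\int_0^1 cv'\,\d x$ vanish), with Cauchy--Schwarz giving the upper bound and the choice $v(x)=-\int_0^x(G-\bar G)$ the matching lower bound. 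By the identity $\partial_x\big(\Iply (\sum_{j=0}^i h u_j)_{i=0}^{Z-1}\big) = \Ipcx u$ of Lemma \ref{2prop-gridfns}, $F$ is a primitive of $\Ipcx u$, so
\begin{equation*}
  \norm{\Ipcx u}_\Hd = \min_{c\in\R}\norm{F-c}_{L^2}.
\end{equation*}

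Next I would derive the analogous discrete formula. With $p:=(-\Delta_h)^{-1}u$, the definition of $\norm{\cdot}_{-1}$ and the symmetry of $\Delta_h$ give $\norm{u}_{-1}^2 = \sp{p}{-\Delta_h p}_0 = \norm{p}_1^2 = \norm{\Iplx p}_\Hsob^2$ by Lemma \ref{2prop-gridfns}. Writing $d_i := (p_{i+1}-p_i)/h$ for the constant slope of $\Iplx p$ on $J_i$, the discrete Poisson equation $-\Delta_h p = u$ rewrites as $d_{i-1}-d_i = h u_i$, whence $d_i = d_0 - F_i$; the boundary values $p_0=p_Z=0$ force $\sum_{i=0}^{Z-1} d_i=0$, i.e. $d_0 = \tfrac1Z\sum_{i=0}^{Z-1}F_i$, which is exactly the constant minimizing $\sum_i (F_i-c)^2$. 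Hence
\begin{equation*}
  \norm{u}_{-1}^2 = \sum_{i=0}^{Z-1} h\,d_i^2 = \min_{c\in\R}\, h\sum_{i=0}^{Z-1}(F_i-c)^2.
\end{equation*}

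Finally I would compare. Since $\Iply(\vec F - c\mathbf{1})=F-c$ and $\Ipcy(\vec F - c\mathbf{1})=\Ipcy\vec F - c$ by the shift identities of Lemma \ref{2prop-gridfns}, and since $\norm{\Ipcy(\vec F - c\mathbf{1})}_{L^2}^2 = h\sum_i(F_i-c)^2$, the equivalence $\norm{\Iply w}_{L^2}\le\norm{\Ipcy w}_{L^2}\le 3\norm{\Iply w}_{L^2}$ applied to $w=\vec F - c\mathbf{1}$ gives, for every $c\in\R$,
\begin{equation*}
  \norm{F-c}_{L^2} \le \Big(h\sum_{i=0}^{Z-1}(F_i-c)^2\Big)^{1/2} \le 3\,\norm{F-c}_{L^2}.
\end{equation*}
Choosing in $\min_c\norm{F-c}_{L^2}$ the discrete minimizer $c=\tfrac1Z\sum_i F_i$ and using the left inequality yields $\norm{\Ipcx u}_\Hd\le\norm{u}_{-1}$; choosing in $\min_c h\sum_i(F_i-c)^2$ the continuous minimizer $c=\int_0^1 F\,\d x$ and using the right inequality yields $\norm{u}_{-1}\le 3\norm{\Ipcx u}_\Hd$, which is the claim.

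The individual ingredients — the primitive formula, the integration by parts, and the slope recursion — are routine. The step I expect to require the most care is the bookkeeping that identifies both norms with a distance-to-constants of the \emph{same} vector $\vec F$: in particular, verifying that the zero-boundary constraint $\sum_i d_i=0$ forces the discrete minimizing constant to be the mean of the $F_i$, so that the two separate minimizations over $c$ can be decoupled from the single pointwise-in-$c$ comparison above.
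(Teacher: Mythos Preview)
Your proof is correct and follows the same overall strategy as the paper: both introduce the partial sums $v_i=\sum_{j\le i}h u_j$, express $\norm{\Ipcx u}_\Hd$ as the $L^2$-distance of $\Iply v$ to the constants, and then invoke the $\Iply/\Ipcy$ equivalence from Lemma \ref{2prop-gridfns} to reach $\norm{u}_{-1}$. The one notable difference is the middle step linking $\norm{\Ipcy(v-\bar v)}_{L^2}$ to $\norm{u}_{-1}$: the paper routes this through the distributional identities $\partial_x(\Ipcy v)=\sum_i h u_i\,\delta_{x_i}$ and $\norm{\partial_{xx}(\Iplx \Delta_h^{-1}u)}_\Hd=\norm{\Iplx\Delta_h^{-1}u}_\Hsob$ from Lemma \ref{2prop-gridfns}, whereas you compute the slopes $d_i$ of $\Iplx(-\Delta_h)^{-1}u$ directly and identify $d_i=d_0-F_i$ with $d_0$ the mean of the $F_i$. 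Your route is slightly more elementary (no Dirac deltas, no $\partial_{xx}$ formula) but arrives at exactly the same identity $\norm{u}_{-1}^2=\min_c h\sum_i(F_i-c)^2$; the paper's route, in exchange, makes more systematic use of the catalogue of identities in Lemma \ref{2prop-gridfns}.
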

\begin{proof}
  Let $v = (v_i)_{i=0}^{Z-1} \in \R^Z$ be defined by $v_i = \sum_{j=0}^i h u_j$.
  Then, using the convention
  $\left(\Delta_h^{-1}u\right)_0 = \left(\Delta_h^{-1}u\right)_Z = 0$ and
  Lemma \ref{2prop-gridfns}, we
  have
  \allowdisplaybreaks
  \begin{align*}
    &\norm{\Ipcx u}_\Hd
    = \norm{\Iply v - \int_0^1\Iply v\ \d x}_{L^2}
    = \norm{\Iply \left(v - \int_0^1\Iply v\ \d x\right)}_{L^2}\\
    &\leq \norm{\Ipcy \left(v - \int_0^1\Ipcy v\ \d x\right)}_{L^2}
    = \norm{\partial_x (\Ipcy v)}_\Hd
    = \norm{\sum_{i=1}^{Z-1}\delta_{x_i} h u_i}_\Hd\\
    &= \norm{\sum_{i=1}^{Z-1}\delta_{x_i} h (\Delta_h\Delta_h^{-1}u)_i}_\Hd
      = \norm{\sum_{i=1}^{Z-1}\delta_{x_i}\frac1{h} \left(-\left(\Delta_h^{-1}u\right)_{i-1} +2
      \left(\Delta_h^{-1}u\right)_i - \left(\Delta_h^ {-1}u\right)_{i+1}\right)}_\Hd\\
    &= \norm{\partial_{xx}(\Iplx \Delta_h^{-1}u)}_\Hd
    = \norm{\Iplx \Delta_h^{-1}u}_\Hsob 
    = \norm{\Delta_h^{-1}u}_1
    = \norm{u}_{-1},
  \end{align*}
  which yields the first inequality. The same calculation yields the second
  inequality if we start with $3\norm{\Ipcx u}_\Hd$
  and replace the third step by
  \begin{displaymath}
    3 \norm{\Iply \left(v - \int_0^1\Iply v\ \d x\right)}_{L^2}
    \geq \norm{\Ipcy \left(v - \int_0^1\Ipcy v\ \d x\right)}_{L^2}.
  \end{displaymath}
\end{proof}

\begin{prop}\label{2conv-pc}
  Let $h>0$ denote a sequence converging to $0$, $u\in L^2([0,T]; \Hd)$,
  $\eta\in L^2([0,T]; L^2)$, and for all $h$ in this sequence, $t\in[0,T]$,
  let $u_h(t), \eta_h(t)\in \R^{\gridlim -1}$
  such that $\pcx{u_h} \in L^2([0,T]; \Hd)$ with $\pcx{u_h} \tow u$ in
  $L^2([0,T]; \Hd)$ and $\pcx{\eta_h}\in L^2([0,T]; L^2)$ with
  $\pcx{\eta_h}\to \eta$ in $L^2([0,T]; L^2)$. Then, for $h\to 0$,
  \begin{displaymath}
    \int_0^T  \sp{\eta_h(t)}{u_h(t)}_{-1} \d t \to \int_0^T\sp{\eta(t)}{u(t)}_\Hd \d t.
  \end{displaymath}
\end{prop}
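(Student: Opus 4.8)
The plan is to route everything through the discrete resolvent $w_h := \inv{(-\Delta_h)}u_h$ and pass to the limit by a strong--weak pairing in $L^2$, the only genuinely delicate point being the identification of the weak limit of $\Iplx w_h$ with $\inv{(-\Delta)}u$. First I would rewrite the integrand: by Definition \ref{2def-discr-norms} and the symmetry of $-\Delta_h$ one has $\sp{\eta_h(t)}{u_h(t)}_{-1} = \sp{\eta_h(t)}{w_h(t)}_0$, and by the isometry of Remark \ref{2Rem:isometry} this equals $\sp{\Ipcx\eta_h(t)}{\Ipcx w_h(t)}_{L^2}$. On the limit side, for $\eta\in L^2$ and $u\in\Hd$ the standard identity $\sp{\eta}{u}_\Hd = \sp{\eta}{\inv{(-\Delta)}u}_{L^2}$ holds, so it suffices to prove that $\Ipcx w_h \tow \bar w := \inv{(-\Delta)}u$ in $L^2([0,T];L^2)$ along the given sequence and to combine this with the assumed strong convergence $\Ipcx\eta_h\to\eta$.

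For the a priori bound I would use $\norm{\Iplx w_h}_\Hsob = \norm{w_h}_1 = \norm{u_h}_{-1}\leq 3\norm{\Ipcx u_h}_\Hd$, invoking the relation $\norm{u}_1 = \norm{\Iplx u}_\Hsob$ of Lemma \ref{2prop-gridfns}, the identity $\norm{w_h}_1^2 = \sp{-\Delta_h w_h}{w_h}_0 = \norm{u_h}_{-1}^2$, and Lemma \ref{2est-H-1}. Since $\Ipcx u_h$ is weakly convergent, hence bounded, in $L^2([0,T];\Hd)$, the family $\Iplx w_h$ is bounded in $L^2([0,T];\Hsob)$ and a subsequence satisfies $\Iplx w_h\tow\bar w$ there. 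Splitting $\sp{\Ipcx\eta_h}{\Ipcx w_h}_{L^2} = \sp{\Ipcx\eta_h}{\Iplx w_h}_{L^2} + \sp{\Ipcx\eta_h}{\Ipcx w_h - \Iplx w_h}_{L^2}$, the second term vanishes in the limit because $\Ipcx\eta_h$ is bounded in $L^2$ and $\norm{\Ipcx w_h - \Iplx w_h}_{L^2}\leq Ch\norm{\Iplx w_h}_\Hsob\to 0$ by a standard interpolation estimate, while the first is handled by strong--weak convergence, giving $\int_0^T\sp{\Ipcx\eta_h}{\Ipcx w_h}_{L^2}\,\d t\to\int_0^T\sp{\eta}{\bar w}_{L^2}\,\d t$.

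The hard part will be identifying $\bar w = \inv{(-\Delta)}u$, i.e. $-\Delta\bar w = u$ weakly, because $u_h$ is controlled \emph{only} in the discrete $\Hd$-norm. Fixing $\psi\in\cC_c^\infty((0,1))$ with nodal vector $\psi_h = (\psi(x_i))_i$, the fact that the piecewise constant $\nabla\Iplx w_h$ integrates $\psi'$ exactly over each $J_i$ yields $\sp{\nabla\Iplx w_h}{\nabla\psi}_{L^2} = \sp{w_h}{\psi_h}_1 = \sp{-\Delta_h w_h}{\psi_h}_0 = \sp{\Ipcx u_h}{\Ipcx\psi_h}_{L^2}$. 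Passing to the limit on the left is immediate from $\Iplx w_h\tow\bar w$, and $\sp{\Ipcx u_h}{\psi}_{L^2}\to\dup{\Hd}{u}{\psi}{\Hsob}$ by the weak $\Hd$-convergence of $\Ipcx u_h$; the crux is therefore the mismatch $\sp{\Ipcx u_h}{\Ipcx\psi_h - \psi}_{L^2}$. Here the geometry of the dual cells is decisive: since each $K_i$ is symmetric about $x_i$, the first-order Taylor term cancels and $\int_{K_i}(\psi(x_i)-\psi(x))\,\d x = O(h^3)$, so Cauchy--Schwarz together with the inverse estimate $\norm{\Ipcx u_h}_{L^2}\leq Ch^{-1}\norm{\Ipcx u_h}_\Hd$ (a consequence of $\norm{-\Delta_h}\leq 4h^{-2}$ from Lemma \ref{2lem-spectralnorm}) bounds the mismatch by $Ch\,\norm{\Ipcx u_h(t)}_\Hd$, which integrates to zero against any weight $\theta\in L^\infty([0,T])$.

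This establishes $\sp{\nabla\bar w}{\nabla\psi}_{L^2} = \dup{\Hd}{u}{\psi}{\Hsob}$ for all $\psi\in\cC_c^\infty((0,1))$, hence $-\Delta\bar w = u$ and $\bar w = \inv{(-\Delta)}u$. As this limit is independent of the extracted subsequence, the whole sequence converges, and $\int_0^T\sp{\eta}{\bar w}_{L^2}\,\d t = \int_0^T\sp{\eta}{\inv{(-\Delta)}u}_{L^2}\,\d t = \int_0^T\sp{\eta}{u}_\Hd\,\d t$, which is the claim. I expect the cell-symmetry cancellation combined with the inverse inequality to be the only nonroutine ingredient; every other step is a bookkeeping application of the norm equivalences in Lemma \ref{2est-H-1} and Lemma \ref{2prop-gridfns} and of elementary strong--weak pairing.
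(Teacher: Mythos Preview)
Your argument is correct but proceeds along a genuinely different route from the paper's. Both proofs introduce the discrete resolvent $w_h=(-\Delta_h)^{-1}u_h$ and reduce the claim to identifying the weak $L^2$ limit of (a prolongation of) $w_h$ with $(-\Delta)^{-1}u$; the difference lies in how this identification is carried out.

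The paper stays with the piecewise constant prolongation $\Ipcx w_h$, bounds it in $L^2([0,T];L^2)$ via a discrete Poincar\'e inequality, and then exploits the identity $\Ipcx(-\Delta_h v_h)=-D_h^-D_h^+\Ipcx v_h$ to perform a discrete integration by parts that transfers \emph{both} difference quotients onto the smooth test function $\xi$. Since $D_h^-D_h^+\xi\to\Delta\xi$ strongly, the identification $f=-\Delta^{-1}u$ follows without any inverse estimate or cancellation argument; the weak $\Hd$ control of $\Ipcx u_h$ is used only at the very end via a single strong--weak pairing.

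Your route instead upgrades to the piecewise linear prolongation $\Iplx w_h$, obtaining the stronger bound in $L^2([0,T];\Hsob)$ from $\norm{\Iplx w_h}_\Hsob=\norm{u_h}_{-1}$. The price is that the exact quadrature $\sp{\nabla\Iplx w_h}{\nabla\psi}_{L^2}=\sp{\Ipcx u_h}{\Ipcx\psi_h}_{L^2}$ leaves a mismatch $\sp{\Ipcx u_h}{\Ipcx\psi_h-\psi}_{L^2}$ in which $\Ipcx u_h$ is controlled only in $\Hd$. You resolve this correctly via the $O(h^3)$ cell-symmetry cancellation combined with the inverse inequality $\norm{u_h}_0\leq 2h^{-1}\norm{u_h}_{-1}$, yielding an $O(h)$ bound. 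This works, and the $\Hsob$ compactness you gain could be useful elsewhere, but the paper's approach of shifting both difference operators onto the test function sidesteps the need for either the cell symmetry or the inverse estimate and is in that sense more economical.
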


\begin{proof}
  After deriving a Poincaré inequality for the discrete norms, \ie
  $\norm{v}_0^2 \leq C\norm{v}_1^2$ for $v\in\R^{Z-1}$, the uniform bound of
  $\pcx{u_h}$ in $L^2([0,T]; \Hd)$ implies a uniform bound of
  $\pcx{(-\Delta_h^{-1}u_h)}$ in $L^2([0,T]; L^2)$. This allows to extract
  a weakly converging nonrelabeled subsequence with limit $f\in L^2([0,T];
  L^2)$. In order to show that $f = -\Delta u$, it is key to realize that 
  \begin{equation}
    \xi(t,x)\pcx{\left(-\Delta_hu_h(t)\right)}(x) = -\xi(t,x)
    \left(D_h^-D_h^+ \pcx{u_h}(t)\right)(x)\quad \text{for almost
      all }t\in[0,T], x\in[0,1]
  \end{equation}
  for a test function $\xi\in\cC_c^\infty([0,T]\times[0,1])$, where
  $D_h^\pm$ are the $h$-difference quotients to the left resp.\ right.
  Hence, conducting a discrete
  integration by parts and considering that $\xi$ has compact support,
  we compute
  \begin{align}\label{2eq:105}
    \begin{split}
      &\int_0^T \sprod{u}{\xi}_{\Hd\times\Hsob}\d t
      = \lim_{h\to 0} \int_0^T\sprod{\pcx{\bar u_h}}{\xi}_{L^2}\d t
      = \lim_{h\to 0} \int_0^T\sprod{\pcx{\left(\Delta_h \Delta_h^{-1}u_h\right)}}{\ \xi}_{L^2}\d t\\
      &= \lim_{h\to 0} \int_0^T \sprod{-D_h^-D_h^+ \pcx{\left(-\Delta_h^{-1}u_h\right)}}{\ \xi}_{L^2}\d t\\
      &= \lim_{h\to 0} \int_0^T\sprod{\pcx{\left(-\Delta_h^{-1}u_h\right)}}{-D_h^-D_h^+\xi}_{L^2}\d t =
      \int_0^T\sprod{f}{-\Delta\xi}_{L^2}\d t,
    \end{split}
  \end{align}
  using the strong convergence of the second-order difference quotient of
  $\xi$ to
  its second derivative. By a density argument, one concludes that
  $f= -\Delta u$ $\d t$-almost everywhere. The proof can then be finished
  by computing
  \begin{align*}
    \int_0^T\sprod{u_h}{\eta_h}_{-1}\d t
    &= \int_0^T\sprod{-\Delta_h^{-1}u_h}{\eta_h}_0\d t
      = \int_0^T
      \sprod{\pcx{\left(-\Delta_h^{-1}u_h\right)}}{\pc{\eta}}_{L^2}\d t\\
      &\quad \to \int_0^T\sprod{-\Delta^{-1}u}{\eta}_{L^2}\d t =
    \int_0^T\sprod{u}{\eta}_{H^{-1}}\d t.
  \end{align*}
\end{proof}

\begin{lem}\label{lifted-Poincare}
  Recall Definitions \ref{2Def:prolong} and \ref{2def-discr-norms} and let $u\in \R^{Z-1}$. Then
  there exists a constant $C$ independent of $h$ such that
  \begin{displaymath}
    \norm{u}_{-1}^2 \leq C \norm{u}_0^2.
  \end{displaymath}
\end{lem}

\begin{proof}
  Note that since $-\Delta_h$ is symmetric and positive definite, one may
  define its symmetric and positive definite sqare root operator $A_h$,
  which satisfies $A_hA_h = -\Delta_h$, $\norm{u}_{1} = \norm{A_h u}_0$ and $\norm{u}_{-1} = \norm{A_h^{-1}u}_0$. 
  Furthermore, we have a Poincaré inequality for the discrete norms by
  \begin{equation}\label{2eq:5}
    \norm{u_h}_0^2 = \norm{\pcx{u_h}}_{L^2}^2 \leq
    C\norm{\plx{u_h}}_{L^2}^2 \leq
    C\norm{\nabla\plx{u_h}}_{L^2}^2 = C \norm{u_h}_1^2 
  \end{equation}
  for $C$ independent of $h$, where the first inequality can be obtained by connecting
  \cite[Propositions 3.1 and 3.2]{Zuazua}, and the last equality is the sixth
  statement in Lemma \ref{2prop-gridfns}. We then compute
  \begin{displaymath}
    \norm{u}_{-1}^2 = \norm{A_h A_h^{-1} u}_{-1}^2 = \norm{A_h^{-1}u}_0^2
    \leq C \norm{u}_0^2.
  \end{displaymath}
\end{proof}

\endappendix
\section*{Acknowledgements}

We acknowledge support by the Max Planck Society through the Max Planck
Research Group ``Stochastic partial differential equations''.
and the funding by the Deutsche Forschungsgemeinschaft (DFG, German Research Foundation) - SFB 1283/2 2021-317210226.

\pdfbookmark{References}{references}

\bibliographystyle{abbrv}
\bibliography{mybooks}

\end{document}